\sloppy \pagestyle{plain}
\newtheorem{theorem}[equation]{Theorem}
\newtheorem*{theorem*}{Theorem}
\newtheorem{lemma}[equation]{Lemma}
\newtheorem{corollary}[equation]{Corollary}
\newtheorem{proposition}[equation]{Proposition}
\theoremstyle{definition}
\newtheorem{example}[equation]{Example}
\newtheorem*{definition*}{Definition}
\theoremstyle{remark}
\newtheorem{remark}[equation]{Remark}
\makeatletter\@addtoreset{equation}{section}
\newcommand{\QQ}{\mathbb{Q}}
\newcommand{\PP}{\mathbb{P}}
\newcommand{\KK}{\mathbb{K}}
\newcommand{\LL}{\mathbb{L}}
\newcommand{\LLL}{{\mathscr{L}}}
\newcommand{\HHH}{{\mathscr{H}}}
\newcommand{\SSS}{\mathfrak{S}}
\newcommand{\mumu}{{\boldsymbol{\mu}}}
\newcommand{\Aut}{\operatorname{Aut}}
\newcommand{\Bir}{\operatorname{Bir}}
\newcommand{\GL}{\operatorname{GL}}
\newcommand{\Gal}{\operatorname{Gal}}
\newcommand{\Fix}{\operatorname{Fix}}
\newcommand{\z}{\operatorname{z}}
\newcommand{\rk}{\operatorname{rk}}
\newcommand{\mult}{\operatorname{mult}}
\newcommand{\Pic}{\operatorname{Pic}}
\newcommand{\rkPic}{\rk\Pic}
\newcommand{\WE}{\mathrm{W}(\mathrm{E}_6)}
\newcommand{\GAff}{\mathrm{GA}_2(\mathbf{F}_3)}
\def \ge {\geqslant}
\def \le {\leqslant}
\date{}
\title{Automorphisms of cubic surfaces without points}
\author{Constantin Shramov}
\address{Steklov Mathematical Institute of Russian Academy of Sciences, 8 Gubkina st.,
Moscow, 119991, Russia
}
\email{costya.shramov@gmail.com}
\thanks{This work is supported by the Russian Science Foundation 
under grant No.~19-11-00237.}
\begin{document}

\begin{abstract}
We classify finite groups acting by birational transformations 
of a non-trivial Severi--Brauer surface over a field of characteristc zero that 
are not conjugate to subgroups of the automorphism group.
Also, we show that the automorphism group of a smooth cubic 
surface over a field~$\KK$ of characteristic zero 
that has no $\KK$-points is abelian, and 
find a sharp bound for the Jordan constants of birational automorphism groups 
of such cubic surfaces.
\end{abstract}

\maketitle

\section{Introduction}

Given a variety $X$, it is natural to try to describe finite subgroups 
of its birational automorphism group $\Bir(X)$ in terms of the birational 
models of $X$ on which these finite subgroups are regularized.
In dimension $2$, this is sometimes possible due to the well developed Minimal Model 
Program. In the case of the group of birational automorphisms of the projective 
plane over an algebraically closed field of characteristic zero, 
this was done in the work of I.\,Dolgachev and V.\,Iskovskikh~\cite{DI}.
Over algebraically non-closed fields, some partial results are also known 
for non-trivial Severi--Brauer surfaces, i.e. 
del Pezzo surfaces of degree $9$ not isomorphic to $\PP^2$ over the base field.
Note that the latter surfaces are exactly the del Pezzo surfaces 
of degree $9$ without points over the base field, 
see e.g.~\cite[Theorem~53(2)]{Kollar-SB}. 

Recall that a birational map $Y\dasharrow X$ defines an embedding of 
a group $\Aut(Y)$ into~\mbox{$\Bir(X)$}. 
We will denote by $\rkPic(X)^{\Gamma}$ the invariant part of the Picard group 
$\Pic(X)$ with respect to the action of a group~$\Gamma$, and 
by $\mumu_n$ the cyclic group of 
order~$n$. The following partial description of 
finite groups of birational automorphisms of non-trivial Severi--Brauer 
surfaces is known.

\begin{proposition}[{\cite[Proposition~3.7]{Shramov-SB}, 
\cite[Corollary~4.5]{Shramov-SB}}]
\label{proposition:summary}
Let $S$ be a non-trivial Severi--Brauer surface over a field $\KK$ of characteristic zero, and let
$G\subset\Bir(S)$ be a finite subgroup.
Then $G$ is conjugate either to a subgroup of
$\Aut(S)$,  or to a subgroup of~\mbox{$\Aut(S')$}, 
where $S'$ is a smooth cubic surface over $\KK$ birational to $S$
such that~\mbox{$\rkPic(S')=3$} and~\mbox{$\rkPic(S')^G=1$}.
In the latter case $G$ is isomorphic to
a subgroup of~$\mumu_3^3$.
\end{proposition}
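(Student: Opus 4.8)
The plan is to combine the $G$-equivariant Minimal Model Program with the birational geometry of Severi--Brauer surfaces. First I would regularize the action, producing a smooth projective surface $X$ over $\KK$ carrying a faithful biregular action of $G$ and a $G$-equivariant birational map $X\dasharrow S$; since $S_{\overline{\KK}}\cong\PP^2$, the surface $X$ is geometrically rational. Running the $G$-equivariant MMP on $X$ yields a $G$-Mori fibre space $Y$, still $G$-birational to $S$: either $Y$ is a del Pezzo surface with $\rkPic(Y)^G=1$, or $Y$ admits a $G$-equivariant conic bundle structure $Y\to C$ over a smooth geometrically rational curve with $\rkPic(Y)^G=2$. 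It then remains to classify the $Y$ that arise and, for each, to read off $G$.

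The heart of the argument, and the step I expect to be the main obstacle, is this classification, which genuinely uses the arithmetic of a \emph{non-trivial} Severi--Brauer surface rather than a formal MMP argument. Two facts about $S$ are essential: it has index $3$, so every closed point of $S$ has degree divisible by $3$; and the class $[S]\in\Br(\KK)$ has order exactly $3$. The conic bundle case is excluded using Amitsur's theorem, that for a central simple $\KK$-algebra $A$ the kernel of $\Br(\KK)\to\Br(\KK(\mathrm{SB}(A)))$ is generated by $[A]$: applying it to $S$ and to a conic forces first the base $C$ to be $\PP^1$, and then, over $\KK(t)=\KK(C)$, forces the cyclic group generated by the class of the generic fibre of $Y\to\PP^1$ --- a class of order dividing $2$ --- to contain the image of the order-$3$ class $[S]$, which is absurd. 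For the del Pezzo case, a case analysis of the del Pezzo surfaces birational to $S$ shows that $K_Y^2\in\{9,3\}$; here the intermediate degrees (and the minimal cubic surfaces) are ruled out partly by the point-degree constraint and partly because the extra automorphisms needed to make such a surface $G$-minimal are not defined over $\KK$ --- they would exhibit $S$ as isomorphic to the Severi--Brauer surface of the opposite algebra, which is impossible since $[S]$ has order $3$. If $K_Y^2=9$, then $Y$ is a Severi--Brauer surface birational to $S$, hence $Y$ is $S$ or the surface of the opposite algebra, and in either case $G$ becomes conjugate in $\Bir(S)$ to a subgroup of $\Aut(S)$. If $K_Y^2=3$, then $Y_{\overline{\KK}}$ is $\PP^2$ blown up at six points, and a $\KK$-rational sixer on $Y$ --- whose contraction is a degree-$9$ Severi--Brauer surface birational to $S$, hence of index $3$ --- is a union of Galois orbits of sizes divisible by $3$ summing to $6$, so of exactly two orbits of size $3$, whence $\rkPic(Y)=3$; and $\rkPic(Y)^G=1$ is precisely the statement that the MMP stopped at $Y$. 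Thus $Y$ is a smooth cubic surface with the asserted properties.

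It remains to bound $G$ in the cubic case. The action of $\Aut(Y)$ on the $27$ lines of $Y_{\overline{\KK}}$ is faithful and preserves $K_Y$ and the intersection form, giving an embedding $\Aut(Y)\hookrightarrow\WE$, and the $G$-action on $\Pic(Y_{\overline{\KK}})$ commutes with that of $\Gal(\overline{\KK}/\KK)$, so the image of $G$ lies in the centralizer of the image of Galois. The condition $\rkPic(Y)=3$ pins the Galois image down, up to conjugacy in $\WE$, to a subgroup of the copy of $S_3\times S_3$ permuting the two triples of exceptional classes over the degree-$3$ points that is transitive on each triple; the requirement that $Y$ be birational to a \emph{non-trivial} Severi--Brauer surface (equivalently, that $[S]$ have order $3$ and not $1$) restricts to those Galois images whose centralizer in $\WE$ sits inside a conjugate of the stabilizer $\GAff$ of a suitable configuration of nine lines; and imposing $\rkPic(Y)^G=1$ finally forces $G$ into an elementary abelian subgroup of order $27$, i.e. $G\hookrightarrow\mumu_3^3$. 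In particular $G$ is abelian. The group-theoretic bookkeeping inside $\WE$ --- enumerating the admissible commuting pairs of subgroups that realise the prescribed invariant Picard ranks --- is routine but unavoidably long.
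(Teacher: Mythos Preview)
This proposition is not proved in the present paper; it is quoted from \cite[Proposition~3.7, Corollary~4.5]{Shramov-SB}, so there is no in-paper argument to compare against directly. Your overall strategy---regularize, run the $G$-equivariant MMP, and classify the terminal $G$-Mori fibre spaces using the arithmetic of the order-$3$ Brauer class $[S]$---is the natural one and is essentially what \cite{Shramov-SB} does. The Amitsur argument for excluding conic bundles works, and your opposite-algebra argument for excluding $G$-minimal del Pezzo sextics is exactly the mechanism behind Lemma~\ref{lemma:dP6-involution} here. The final bookkeeping inside $\WE$ leading to $G\subset\mumu_3^3$ is sketchier than the rest, but the idea is right.

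There is, however, a genuine gap in your treatment of cubics. Your sixer argument shows only that a $\KK$-rational sixer on $Y$ contracting to a Severi--Brauer surface decomposes into Galois orbits whose sizes are multiples of $3$ summing to $6$, hence of pattern $3+3$ \emph{or} $6$: a non-trivial Severi--Brauer surface certainly carries closed points of degree $6$, and blowing one up in general position yields a smooth cubic birational to $S$ with $\rkPic=2$. So you have not excluded $\rkPic(Y)=2$. The opposite-algebra trick you invoke does not obviously apply here either: both extremal contractions from such a $Y$ go to Severi--Brauer surfaces, but you give no argument that their Brauer classes are inverse to one another. The actual mechanism that closes this case is different and is \cite[Lemma~3.3]{Shramov-SB}, reproduced as Lemma~\ref{lemma:order-3} in this paper: a smooth pointless cubic over $\KK$ has automorphism group of odd order, because every involution of a cubic surface has either a Galois-invariant isolated fixed point or a Galois-invariant line. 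Reducing $\rkPic(Y)^G$ from $2$ to $1$ would force $G$ to act by $-1$ on the rank-one lattice $K_Y^\perp\subset\Pic(Y)$ and hence to contain an element of even order; thus $\rkPic(Y)=2$ is impossible, but only once this odd-order lemma is in hand.
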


However, it was not known whether the second of the two cases listed in 
Proposition~\ref{proposition:summary} can indeed occur. 
The first goal of this paper is to construct such examples and thus 
complete the classification of finite 
subgroups of birational automorphism groups 
of non-trivial Severi--Brauer surfaces that are not conjugate to subgroups 
of automorphism groups.

\begin{theorem}\label{theorem:main}
Let $G$ be one of the groups~$\mumu_3$,~$\mumu_3^2$,
and~$\mumu_3^3$.
Then there exists a field 
$\KK$ of characteristic zero and a non-trivial Severi--Brauer surface 
$S$ over $\KK$ such that $\Bir(S)$ contains a subgroup isomorphic to $G$ 
and not conjugate to a subgroup of~\mbox{$\Aut(S)$}.
\end{theorem}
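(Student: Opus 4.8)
The plan is to produce, for each of the three groups $G\in\{\mumu_3,\mumu_3^2,\mumu_3^3\}$, a cubic surface $S'$ over a suitable field $\KK$ of characteristic zero with $G\subset\Aut(S')$, $\rkPic(S')=3$, and $\rkPic(S')^{G}=1$, together with a non-trivial Severi--Brauer surface $S$ birational to $S'$; by Proposition~1.5 (read in reverse) this will exhibit $G\hookrightarrow\Bir(S)$ in the genuinely second case, and one must then separately argue that such a $G$ is \emph{not} conjugate into $\Aut(S)$. The natural source of the group action is the following: over $\overline\KK$ a smooth cubic surface with Picard rank $3$ of the required shape is a blow-up of $\PP^1\times\PP^1$, or better, one looks for a cubic surface admitting a $\mumu_3^{3}$ of automorphisms defined over $\KK$ that permutes the $27$ lines with no invariant line but with a $G$-invariant divisor class of the conic bundle / del Pezzo fibration type so that the quotient of the geometric Picard lattice has the stated invariants. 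Concretely I would start from the Fermat-type cubic $x_0^3+x_1^3+x_2^3+x_3^3=0$ (or a twist of it), whose automorphism group contains the diagonal $\mumu_3^3$ acting by multiplying coordinates by cube roots of unity; the subtlety is that over an algebraically closed field this surface \emph{has} points, so one must pass to a form of it.

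The key construction is therefore an arithmetic twist. First I would fix the field: take $\KK_0=\QQ$, adjoin a primitive cube root of unity if needed, and then pass to a field $\KK$ over which a chosen central division algebra of degree $3$ exists — e.g. a suitable $p$-adic field or a function field $k(t)$ — so that a non-trivial Severi--Brauer surface $S$ is available with $[S]$ of order $3$ in $\Br(\KK)$. Second, I would produce the cubic surface $S'$ as a $\KK$-form: since the relevant cubic surfaces birational to $S$ are exactly those of Picard rank $3$ obtained by the standard Sarkisov-type links ($S\dasharrow S'$ blowing up a Galois-stable triple of points, or its inverse), I would write down the Galois cocycle defining $S$, lift it to a cocycle valued in $\Aut(\text{configuration})$ compatible with the $\mumu_3^{3}$-action, and take $S'$ to be the twisted cubic surface. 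Here one uses that $\mumu_3^{3}$ is abelian and its action on the $27$ lines (equivalently on $\Pic$) centralizes the Galois image, so the twist carries the $G$-action along and one computes $\rkPic(S')^{G}=1$ directly from the character of $G$ on the geometric Picard lattice $\ZZ^{7}$.

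The main obstacle, and the heart of the argument, is the \emph{non-conjugacy} statement: showing that the resulting $G\subset\Bir(S)$ cannot be conjugated into $\Aut(S)$. The strategy is to use the classification of finite subgroups of $\Aut(S)$ for a non-trivial Severi--Brauer surface $S$ — such a group acts on $S$ preserving the (unique, anticanonical-type) polarization and hence embeds into $\PGL_3$ of the underlying division algebra in a way constrained by $[S]\in\Br(\KK)$; in particular the possible abelian subgroups are limited, and a $\mumu_3^{3}$ (and even in some cases $\mumu_3^{2}$ or $\mumu_3$) realized via the cubic-surface model produces an action on $\Pic$, or a fixed-point / ramification pattern, incompatible with any regularization on $S$. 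Precisely, I would compare the two models birationally: a birational self-map of $S$ conjugating $G$ into $\Aut(S)$ would, by the $G$-equivariant Minimal Model Program and the very dichotomy of Proposition~1.5, have to be an isomorphism of $G$-surfaces between $S'$ and $S$ (there being no other $G$-minimal model in the link), contradicting $\rkPic(S')^{G}=1\neq\rkPic(S)^{G}$ since $\rkPic(S)=1$ forces any $\Aut(S)$-model to already have invariant Picard rank $1$ but \emph{with} a $\KK$-point structure that $S'$ lacks; the Brauer class obstruction then seals it. I expect the bulk of the write-up to be the explicit cocycle bookkeeping making the twist $S'$ and the $G$-action simultaneously defined over $\KK$, while the conceptual content is this equivariant-MMP rigidity of the link between $S$ and $S'$.
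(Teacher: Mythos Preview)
Your overall strategy matches the paper's: build a pointless cubic surface $S'$ carrying a $\mumu_3^3$-action, verify $\rkPic(S')=3$ and $\rkPic(S')^{G'}=1$ for a suitable cyclic subgroup $G'\cong\mumu_3$, deduce from Lemma~\ref{lemma:birational-to-SB} that $S'$ is birational to a non-trivial Severi--Brauer surface, and then invoke equivariant rigidity to block conjugation into $\Aut$ of that surface. Two points, however, need correction.

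First, the construction. The paper avoids cocycle bookkeeping entirely: it writes down the cubic $\lambda x^3+\lambda^2 y^3+\mu z^3+\mu^2 t^3=0$ over $\KK=\Bbbk(\lambda,\mu)$ with $\Bbbk$ algebraically closed. Pointlessness is checked by hand, the diagonal $\mumu_3^3$ visibly acts over $\KK$, and both the Galois image in $\WE$ and the bound $\rkPic(S')\ge 3$ are read off from the explicit list of the $27$ lines, all of which are defined over $\KK(\sqrt[3]{\lambda},\sqrt[3]{\mu})$. Your twist-by-cocycle plan could presumably be made to work, but as written it does not explain how you simultaneously guarantee $S'(\KK)=\varnothing$, $\rkPic(S')=3$ (and not smaller), and that the Galois cocycle centralizes the full $\mumu_3^3$; the explicit equation sidesteps all of this.

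Second, and more seriously, your non-conjugacy argument contains a genuine error. You claim a $G$-equivariant birational map $S'\dasharrow S$ would contradict ``$\rkPic(S')^{G}=1\neq\rkPic(S)^{G}$''. But $\rkPic(S)=1$ for a Severi--Brauer surface, so $\rkPic(S)^{G}=1$ as well, and there is no inequality. The paper's argument is different and is precisely the content of Lemma~\ref{lemma:BR}: via the Noether--Fano inequality, any $G$-equivariant birational map from a pointless cubic with $G$-invariant Picard rank $1$ to a del Pezzo surface with $G$-invariant Picard rank $1$ must be an \emph{isomorphism}, because the base locus would have to contain a point of degree at most $2$, which is excluded by Lemma~\ref{lemma:points-deg-2}. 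The contradiction is then simply that a cubic surface is not isomorphic to a Severi--Brauer surface. Your appeals to a ``$\KK$-point structure'' (neither surface has $\KK$-points) and a ``Brauer class obstruction'' do not substitute for this step.
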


Proposition~\ref{proposition:summary}
was used in \cite{Shramov-SB} to study finite 
subgroups of birational automorphism groups  of non-trivial Severi--Brauer 
surfaces.

\begin{theorem}[{\cite[Theorem~1.2(ii)]{Shramov-SB}, \cite{Shramov-SB-short}}]
\label{theorem:SB}
Let $\KK$ be a field of characteristic zero, and let $S$ be a 
non-trivial Severi--Brauer surface over $\KK$. 
Let $G$ be a finite subgroup of~\mbox{$\Bir(S)$}. Then $G$ is either abelian, 
or contains a normal abelian subgroup of 
index~$3$. Furthermore, there exists a field $\KK_0$ of characteristic 
zero, a non-trivial Severi--Brauer surface $S_0$ over $\KK_0$,
and a finite subgroup $G_0\subset\Aut(S_0)$ such that
the minimal index of a normal abelian subgroup in $G_0$ equals~$3$.
\end{theorem}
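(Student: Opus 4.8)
The plan is to first reduce to the case of a genuine automorphism group and then to exploit the structure of the underlying division algebra. By Proposition~\ref{proposition:summary} we may assume, after replacing $G$ by a conjugate, either that $G$ is isomorphic to a subgroup of $\mumu_3^3$, in which case $G$ is abelian and there is nothing to prove, or that $G\subset\Aut(S)$. Since $S$ is a non-trivial Severi--Brauer surface, the associated central simple algebra $A$ of degree $3$ has non-trivial class in $\Br(\KK)$, and as $3$ is prime this forces $A$ to be a division algebra, with $\Aut(S)\cong\PGL_1(A)$. So the problem becomes the study of finite subgroups $G\subset\PGL_1(A)$.

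The first key step is the observation that every such $G$ has odd order. Any subfield of the division algebra $A$ has degree over $\KK$ dividing $\dim_\KK A=9$ and at most $\deg A=3$, hence equal to $1$ or $3$; in particular $A$ contains no quadratic subfield. If an element $g\in\PGL_1(A)$ had order $2$, then a lift $\hat g\in A^{\ast}$ would satisfy $\hat g^{2}\in\KK^{\ast}$, so $\KK[\hat g]$ would be a commutative subalgebra of $A$ of $\KK$-dimension at most $2$, i.e.\ a subfield of degree $1$ or $2$; the first case gives $g=1$, and the second is impossible. Thus $\PGL_1(A)$ has no element of order $2$, hence none of even order, and $|G|$ is odd.

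The second step is to regard $G$ as a finite subgroup of $\PGL_3(\bar\KK)$ (after base change, and after passing to a lift in $\SL_3(\bar\KK)$ when one wants to speak of the associated representation) and to run through the trichotomy: $G$ preserves a line in $\PP^2_{\bar\KK}$, or it is imprimitive, or it is primitive. Being of odd order, $G$ cannot be primitive, since every primitive finite subgroup of $\PGL_3(\bar\KK)$ --- the icosahedral group $\AAA_5$, the Valentiner group $\AAA_6$, the group $\PSL_2(\mathbf{F}_7)$, and the Hessian groups of orders $36$, $72$, $216$ --- has even order. If $G$ preserves a line, then by Maschke's theorem its action on $\bar\KK^{3}$ is completely reducible; a two-dimensional irreducible summand would produce a non-cyclic, hence even-order, image in $\PGL_2(\bar\KK)$ and force $|G|$ to be even, which is excluded, so $G$ acts by a sum of three characters and is abelian. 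If $G$ is imprimitive, it transitively permutes a triangle of three lines, yielding a surjection onto the unique transitive odd-order subgroup $\mumu_3\subset\SSS_3$ whose kernel, fixing each of the three lines, is simultaneously diagonalizable and thus a normal abelian subgroup of index $3$. This gives the dichotomy. The step I expect to be the main obstacle is precisely this reduction to odd order together with the input that all primitive finite subgroups of $\PGL_3(\bar\KK)$ have even order; once those are available, the rest is Maschke's theorem and the elementary structure of transitive subgroups of $\SSS_3$.

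For the sharpness clause I would write down an explicit example. Take $\KK_0=\QQ_2$ and $L_0=\QQ_2(\zeta_7)$, which is the unramified cubic extension of $\QQ_2$ because $2$ has order $3$ modulo $7$; let $A_0=(L_0/\QQ_2,\mathrm{Frob},2)$ be the corresponding cyclic algebra, which is the central division algebra of degree $3$ over $\QQ_2$, so that $S_0=\mathrm{SB}(A_0)$ is a non-trivial Severi--Brauer surface. Writing $A_0=L_0\oplus L_0u\oplus L_0u^{2}$ with $u\ell u^{-1}=\mathrm{Frob}(\ell)$ and $u^{3}=2$, the class $\bar\zeta_7$ of $\zeta_7$ in $\PGL_1(A_0)$ has order $7$, while the class $\bar u$ has order $3$ and conjugates $\bar\zeta_7$ to the class of $\mathrm{Frob}(\zeta_7)=\zeta_7^{2}$. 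Hence $G_0:=\langle\bar\zeta_7,\bar u\rangle\subset\Aut(S_0)$ is a non-abelian group of order $21$, and since its only non-trivial proper normal subgroup is cyclic of order $7$, the minimal index of a normal abelian subgroup of $G_0$ equals $3$.
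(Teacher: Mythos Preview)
The paper does not give its own proof of this theorem: it is quoted verbatim from the author's earlier papers \cite{Shramov-SB} and \cite{Shramov-SB-short}, and is used here only as input for Theorem~\ref{theorem:cubic}. So there is no in-paper argument to compare against.

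That said, your proposal is a correct proof along the lines of the cited sources. The reduction via Proposition~\ref{proposition:summary} to $G\subset\Aut(S)\cong\PGL_1(A)$ is exactly the intended one (note that Proposition~\ref{proposition:summary} itself comes from \cite{Shramov-SB}, so you are reconstructing the second half of that paper's argument, not giving an independent proof). Your parity argument is clean: since $A$ is a degree-$3$ division algebra it has no quadratic subfields, hence $\PGL_1(A)$ has no involutions. The trichotomy in $\PGL_3(\bar\KK)$ is carried out correctly; the only point worth making explicit is that the lift $\hat G\subset\SL_3(\bar\KK)$ is in general a central extension of $G$ by a subgroup of $\mumu_3$, not $G$ itself, but since $|\hat G|$ is still odd and the permutation map to $\SSS_3$ factors through $G$, your conclusions go through unchanged. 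The Maschke step also uses this lift: ``$G$ preserves a line'' really means $\hat G$ preserves a $2$-dimensional subspace, and complete reducibility then yields the diagonalisation.

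Your sharpness example over $\KK_0=\QQ_2$ is correct and pleasantly explicit: $L_0=\QQ_2(\zeta_7)$ is indeed the unramified cubic extension, $2$ is not a norm from $L_0$ (its valuation is $1\not\equiv 0\pmod 3$), so $A_0$ is a division algebra, and the resulting group of order $21$ inside $\PGL_1(A_0)$ has minimal normal-abelian index exactly~$3$. This is the same flavour of example as in \cite{Shramov-SB-short}, though the specific field may differ.
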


Using the notion of \emph{Jordan constant}
(see \cite[Definition~1]{Popov14}), one can reformulate 
Theorem~\ref{theorem:SB} by saying that 
the Jordan constant of the birational automorphism group 
of a non-trivial Severi--Brauer surface over a field of characteristic zero
does not exceed $3$, and this bound is attained over suitable fields.
We refer the reader to~\cite[Theorem~1.9]{Yasinsky} for an analog of this 
result for the group of birational automorphisms of the projective
plane.

As a by-product of the constructions used in this paper, we prove an 
analog of Theorem~\ref{theorem:SB} for automorphism 
groups of cubic surfaces without points
over the base field 
(including those that are not birational to any Severi--Brauer surface).

\begin{theorem}\label{theorem:cubic}
Let $\KK$ be a field of characteristic zero, and let $S$ be a smooth cubic 
surface over $\KK$. Suppose that $S$ has no $\KK$-points.
The following assertions hold.
\begin{itemize}
\item[(i)] The group $\Aut(S)$ is abelian.

\item[(ii)] Every finite subgroup of $\Bir(S)$ has a normal 
abelian subgroup of index at most $3$.

\item[(iii)] There exists a field $\KK_0$ of characteristic
zero, a smooth cubic surface $S_0$ over $\KK_0$ 
with~\mbox{$S_0(\KK_0)=\varnothing$},
and a finite subgroup $G_0\subset\Bir(S_0)$ such that 
the minimal index of a normal abelian subgroup in $G_0$ equals~$3$.
\end{itemize}
\end{theorem}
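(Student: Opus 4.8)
The plan is to prove the three parts in order, using the structural results recalled in the introduction as black boxes.

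For part~(i), the key point is that a smooth cubic surface $S$ with $S(\KK)=\varnothing$ has Picard rank one over $\KK$. Indeed, if $\rkPic(S)\geq 2$, one could run a $\KK$-Minimal Model Program and obtain a Sarkisov link contracting $S$ to a del Pezzo surface of larger degree or to a conic bundle; combined with the classification of minimal del Pezzo surfaces this would force $S$ to have a $\KK$-point (for instance, a Galois orbit of disjoint lines of size $\le 3$ on $S$ blows down to a point). So $S$ is $\KK$-minimal with $\rkPic(S)=1$, and then $\Aut(S)$ acts on the $27$ lines of $S_{\bar\KK}$ commuting with the Galois action; the image of $\Aut(S)$ in $\mathrm{W}(\mathrm{E}_6)$ together with the Galois image generates a subgroup whose invariant sublattice of $\Pic(S_{\bar\KK})$ is just $\ZZ\cdot(-K_S)$. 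I would then invoke the fact that a subgroup $G\subset\WE$ with $\rkPic^{G\cdot\Gamma}=1$, where $\Gamma$ is the Galois image and $G$ commutes with $\Gamma$, must be abelian — this is exactly the group-theoretic core of the paper, and I expect it to be established by a case analysis of which subgroups of $\WE$ can have a one-dimensional invariant sublattice while centralizing a non-trivial Galois action forcing pointlessness. Concretely, pointlessness of $S$ restricts $\Gamma$ (e.g. it must act without fixed lines and without invariant "triads" blowing down to points), and the centralizer of such a $\Gamma$ in $\WE$ turns out to be abelian.

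For part~(ii), let $G\subset\Bir(S)$ be finite. Regularize $G$ on a smooth projective surface and run a $G$-equivariant MMP to reach a $G$-Mori fibre space $S'$; since $S$ has no $\KK$-point, neither does $S'$, so $S'$ is either a pointless del Pezzo surface with $\rkPic(S')^G=1$ or a conic bundle over a pointless conic (hence over a non-trivial Severi--Brauer curve, i.e. a genuine conic without points). In the conic-bundle case, $\Bir(S)$ birational to such a surface; but a cubic surface without points is not birational to $\PP^1$ or to a ruled surface over a pointed base in a way that would help — more precisely one shows the only conic bundles in the $G$-birational class are the trivial ones and reduces to the del Pezzo case, or one treats the conic bundle directly: a $G$-equivariant conic bundle over a pointless conic $C$ has $G$ acting on $C\cong$ a Severi--Brauer curve, where $\Aut(C)$ is a form of $\PGL_2$ whose finite subgroups over such fields are cyclic, and the fibrewise part is controlled as in the Severi--Brauer surface analysis. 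In the del Pezzo case one runs through degrees: degrees $1,2,3,5,6,8,9$ with a pointless minimal model and $\rkPic^G=1$. For degree $9$ this is a non-trivial Severi--Brauer surface and Theorem~\ref{theorem:SB} applies directly. For the other degrees, pointlessness is very restrictive (del Pezzo surfaces of degree $5$ and $7$ always have points; degree $6$ pointless ones exist but have abelian relevant automorphisms; degrees $1,2$ have points automatically in the relevant cases, etc.), and in each surviving case the same centralizer-in-$\WE$-type argument as in part~(i), or a direct inspection, yields a normal abelian subgroup of index at most~$3$, the index $3$ coming precisely from the $\mumu_3^3$-versus-semidirect-product phenomenon already visible in Proposition~\ref{proposition:summary} and Theorem~\ref{theorem:SB}.

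For part~(iii), I would produce the example by the construction that underlies Theorem~\ref{theorem:main}: take the field $\KK_0$ and non-trivial Severi--Brauer surface $S_0$ from Theorem~\ref{theorem:SB} (or from Theorem~\ref{theorem:main} with $G=\mumu_3^3$) realizing a finite group whose minimal abelian-subgroup index is $3$, and note that $S_0$ is birational to a smooth cubic surface $S_0'$ with $\rkPic(S_0')=3$, $S_0'(\KK_0)=\varnothing$ (no point, since it is birational to a pointless surface over a field where weak approximation / the MMP prevents creating points), carrying the same group inside $\Bir(S_0')=\Bir(S_0)$. Then the minimal index of a normal abelian subgroup of $G_0$ inside $\Bir(S_0')$ is $3$ by construction, and the upper bound of part~(ii) shows it is exactly $3$.

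The main obstacle, as indicated, is the group theory: showing that the relevant subgroups of $\WE$ — those commuting with a Galois action that forces pointlessness and having one-dimensional invariant Picard sublattice — are abelian (part~(i)) and have normal abelian subgroups of index at most $3$ in the birational-MMP incarnation (part~(ii)). Everything else is an application of equivariant MMP over a general field plus the already-cited structure theorems.
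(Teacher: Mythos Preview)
Your proof of part~(i) rests on a false claim: you assert that a smooth cubic surface $S$ with $S(\KK)=\varnothing$ must have $\rkPic(S)=1$. This is not true. The paper's own main example (Section~\ref{section:main-example}) is a pointless cubic surface with $\rkPic(S)\ge 3$, and Lemma~\ref{lemma:birational-to-SB} only gives $\rkPic(S)\le 3$. Your argument for rank one fails because contracting a Galois orbit of disjoint lines on a pointless cubic can land on a pointless del Pezzo surface of degree~$6$ or on a non-trivial Severi--Brauer surface, not on a surface with a $\KK$-point. Consequently your entire strategy for~(i)---reduce to $\rkPic=1$ and analyze centralizers in $\WE$ under that hypothesis---collapses. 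The paper's actual argument for~(i) is quite different and does not use the Picard rank of $S$ at all: it first shows $|\Aut(S)|$ is odd by examining fixed loci of involutions (Lemma~\ref{lemma:order-3}), then does a case split on the isomorphism type of $S_{\bar\KK}$ via Dolgachev's classification. In the hardest cases (types~I, III, IV) it exploits the central $\mumu_3\subset\Aut(S)$, whose quotient map $S\to\PP^2$ is branched over a pointless plane cubic; an inflection-point argument (Corollary~\ref{corollary:GAff}) produces a non-central order-$3$ Galois element, and the centralizer computation is then carried out inside the $3$-Sylow subgroup $\mumu_3^3\rtimes\mumu_3$ of $\WE$.

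For part~(ii) your outline is in the right spirit (regularize, run $G$-MMP, analyze terminal models), but it is too loose and includes cases that do not arise. The paper shows that conic bundles never occur (Proposition~\ref{proposition:all-birational-models}) and that del Pezzo surfaces of degrees $1,2,4,5,7,8$ are excluded (Corollary~\ref{corollary:1-2}, Lemmas~\ref{lemma:dP4}--\ref{lemma:dP8}), so the only terminal models are cubics, del Pezzo sextics with $\rkPic=1$ and no points of degree $\le 2$, and non-trivial Severi--Brauer surfaces. Your claim that ``degree~$6$ pointless ones have abelian relevant automorphisms'' is also slightly off: the paper only gets a normal abelian subgroup of index at most~$3$ for such surfaces (Lemma~\ref{lemma:dP6-Aut}), not abelianness. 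For part~(iii) your idea is essentially what the paper does in Corollary~\ref{corollary:J-rkPic-3}, though you should say explicitly how to pass from a Severi--Brauer surface to a pointless cubic: blow up two degree-$3$ points in general position.
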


In other words, if $S$ is a smooth cubic surface 
over a field $\KK$ of characterisitic zero with~\mbox{$S(\KK)=\varnothing$}, 
then the Jordan constant 
the group $\Bir(S)$ equals $3$, and this bound is attained for certain 
cubic surfaces over suitable fields.

In course of the proof of our main results, we obtain a classification 
of birational models of smooth cubic surfaces 
without points and del Pezzo surfaces of degree~$6$
without points of degree~$1$ and~$2$
that we find interesting on its own. 

\begin{proposition}\label{proposition:all-birational-models}
Let $\KK$ be a field of characteristic zero, and let $S$ be a surface 
over $\KK$ of one of the following three types:
\begin{itemize}
\item a smooth cubic surface with no $\KK$-points;

\item a del Pezzo surface of degree $6$ with no points 
of degree $1$ and $2$ over $\KK$;

\item a non-trivial Severi--Brauer surface.
\end{itemize}
Suppose that $S'$ is a surface over $\KK$ birational to $S$, 
such that $S'$ is either a del Pezzo
surface or a conic bundle. Then $S'$ is also a surface of one of the above 
three types. In particular, $S'$ is not a conic bundle. 
\end{proposition}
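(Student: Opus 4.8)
The plan is to use the Minimal Model Program over $\KK$ together with the classification of del Pezzo surfaces and conic bundles of small degree, keeping track of the key obstruction: the absence of $\KK$-points (and, for the degree $6$ case, of points of degree $1$ and $2$). The three surface types listed share a common feature — the Galois group acts on the geometric Picard group through a subgroup of $\mathrm{W}(\mathrm{E}_6)$, or more precisely through a $3$-group related to $\mathrm{GA}_2(\mathbf{F}_3)$ — and what really unites them is an arithmetic obstruction: each has index divisible by $3$, i.e., every closed point has degree divisible by $3$ (for the cubic surface and the Severi--Brauer surface this is classical; for the degree $6$ del Pezzo surface without points of degree $1$ and $2$, one checks the index is exactly $3$ using the conic bundle structures and the fact that $3 \nmid 2$). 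Since the index is a birational invariant (over a fixed field, for smooth projective surfaces it equals the gcd of degrees of closed points, which is birationally invariant), any $S'$ birational to $S$ also has index divisible by $3$.

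First I would record this index divisibility and its birational invariance, so that throughout we may assume $S'$ has no $\KK$-points and, more strongly, no points of degree coprime to $3$. Next I would run the two-ray game / MMP argument: by assumption $S'$ is already a del Pezzo surface or conic bundle in its own right, so I do not need to run MMP from scratch; instead I need to enumerate which del Pezzo surfaces and conic bundles can have index divisible by $3$. For a conic bundle $S' \to C$ over a curve $C$: the generic fiber is a conic over $\KK(C)$, and a conic bundle always admits a multisection of degree $2$ (a bisection), coming from, e.g., the ramification divisor of the projection to $C$ after a suitable choice, or simply from the fact that a conic has a point over a quadratic extension; combined with the fact that $C$ itself, being a birational model issue — actually $C \cong \PP^1$ here because $S'$ is rational (it is birational to a del Pezzo surface of degree $\ge 6$, hence geometrically rational, and the three listed types are all geometrically rational; rationality of $C$ follows since $S'$ has a closed point of degree $3$ which maps to a closed point of $C$ of degree dividing $3$, and $S'$ being birational to a surface with lots of rational curves forces $C$ to have points, hence $C = \PP^1$) — gives a point of degree $2$ on $S'$, contradicting $3 \mid$ index. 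This rules out conic bundles entirely, which is the clean part.

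The harder part is the del Pezzo case: I must show that a del Pezzo surface $S'$ over $\KK$ with index divisible by $3$ is of one of the three listed types. By the degree classification, $\deg S' \in \{1,\dots,9\}$. Degrees $1$ and $5$ always have a $\KK$-point (degree $1$: the base point of the anticanonical system; degree $5$: classical, via the Galois action on the $10$ lines having a fixed configuration forcing a rational point), so those are excluded. Degrees $2,3,4,7,8$ I would handle by producing a point of degree prime to $3$: for degree $7$ there is a $\KK$-rational $(-1)$-curve configuration giving a point; for degree $8$ (either a quadric surface or a blow-up of $\PP^2$) one finds a point of degree $1$ or $2$; for degree $4$ (intersection of two quadrics) the index divides $4$ by an elementary argument on the Picard lattice, so index divisible by $3$ forces a contradiction unless — here I need to be careful — actually degree $4$ del Pezzo surfaces can fail to have points, but their index is always a power of $2$, again contradicting $3 \mid$ index; degree $2$ similarly has index dividing $2$ or so via the double cover structure. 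That leaves exactly degree $3$ (cubic surface, with no $\KK$-points — this is forced since a cubic with a point has index $1$... more precisely index $1$ or $3$, and $3 \mid$ index means no point), degree $6$ (and the no-points-of-degree-$1$-and-$2$ condition is exactly what index divisible by $3$ gives, since the index of a degree $6$ del Pezzo is in $\{1,2,3,6\}$ and divisibility by $3$ kills the cases with index $1$ or $2$), and degree $9$ (Severi--Brauer, non-trivial because $\PP^2$ has a point). I expect the main obstacle to be the bookkeeping for degrees $2$ and $4$: pinning down that their indices are coprime to $3$ requires either a lattice-theoretic computation with the Galois action on $\Pic$ (the Weyl group $\mathrm{W}(\mathrm{D}_5)$ or $\mathrm{W}(\mathrm{D}_4)$ has order a power of $2$ times small factors, and one must check the orbit structure of exceptional classes) or an explicit geometric construction of a low-degree point; I would favor the lattice approach, computing for each degree the smallest degree of a Galois orbit of $(-1)$-curves (or of an effective anticanonically-positive class) and observing it is never forced to be divisible by $3$ except in degrees $3$, $6$, $9$. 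Finally I would assemble: $S'$ is a del Pezzo of degree $3$, $6$, or $9$ with the stated point-freeness, i.e., one of the three listed types, and the conic bundle case is vacuous — completing the proof, with the last sentence of the statement ("$S'$ is not a conic bundle") being an immediate corollary.
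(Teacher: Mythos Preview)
Your approach via the index (the gcd of degrees of closed points) is genuinely different from the paper's and would be cleaner if it worked, but it rests on a claim that is not classical and is, as far as I know, open. You assert that a smooth cubic surface $S$ over $\KK$ with $S(\KK)=\varnothing$ has every closed point of degree divisible by~$3$. What Coray actually proved is weaker: if such a surface has a $0$-cycle of degree~$1$, then it has a closed point of degree $1$, $4$, or~$10$; whether degrees $4$ or $10$ can occur in the absence of a $\KK$-point is a well-known open question. So you cannot conclude that the index of $S$ is~$3$, and therefore you cannot exclude $S'$ being a del Pezzo surface of degree $4$ or $8$ merely by saying its index is coprime to~$3$. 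The paper instead uses the weaker, provable facts that $S$ has no closed points of degree at most~$2$ (Lemma~\ref{lemma:points-deg-2}) but does carry one of degree~$3$, and then shows directly (Lemmas~\ref{lemma:dP4} and~\ref{lemma:dP8}) that a del Pezzo surface of degree $4$ or~$8$ possessing a point of degree~$3$ must already have a $\KK$-point.

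Your justification for the degree~$6$ del Pezzo case has a separate gap: you invoke ``the conic bundle structures'' on such a surface, but when $\rkPic(S)=1$ there is no conic bundle structure defined over~$\KK$, so that argument is unavailable precisely in the case that matters. Your exclusion of conic bundles for $S'$ (the fibre over a $\KK$-point of the base is a conic, hence carries a point of degree at most~$2$) does go through once one knows the base is~$\PP^1$, and it only requires that $S'$ have no points of degree at most~$2$ together with a point of degree~$3$; the paper handles this instead via the classification of Sarkisov links (Lemma~\ref{lemma:Sarkisov-IV}, Lemma~\ref{lemma:dP6-birational}, and Corollary~\ref{corollary:minimal-cubic}).
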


\smallskip
The plan of the paper is as follows. 
In Section~\ref{section:cubic} we prove some auxiliary results 
concerning birational geometry of cubic surfaces. 
In Section~\ref{section:main-example}
we provide a construction that proves
Theorem~\ref{theorem:main}. 
In Section~\ref{section:plane}
we discuss automorphisms of plane cubic curves. 
In Section~\ref{section:Jordan}
we prove the first assertion of Theorem~\ref{theorem:cubic}.
In Section~\ref{section:dP6}
we make some additional observations on del Pezzo surfaces of degree $6$.
In Section~\ref{section:Bir}
we complete the proof of Theorem~\ref{theorem:cubic}.

\smallskip
\textbf{Notation and conventions.}
We denote by $\SSS_n$ the symmetric group on $n$ letters.

Given a field $\KK$, we denote by $\bar{\KK}$ its algebraic closure.
If $X$ is a variety defined over~$\KK$ and~$\LL$ is an extension of~$\KK$, 
we denote by~$X_{\LL}$
the extension of scalars of $X$ to~$\LL$.
By a point of degree $r$ on a variety defined over some field $\KK$ we mean a closed point whose
residue field is an extension of~$\KK$ of degree~$r$;
a $\KK$-point is a point of degree~$1$.
The set of all $\KK$-points of $X$ is denoted by~$X(\KK)$.

A del Pezzo surface is a smooth projective 
surface with an ample anticanonical class.
For a del Pezzo surface $S$, by its degree we mean its (anti)canonical degree~$K_S^2$.
Del Pezzo surfaces of degree $3$ are exactly smooth cubic surfaces.

\smallskip
\textbf{Acknowledgements.}
I am grateful to  A.\,Trepalin and V.\,Vologodsky 
for useful discussions, and to J.-L.\,Colliot-Th\'el\`ene
for interesting references.

\section{Birational models of cubic surfaces}
\label{section:cubic}

The following assertion is known as the theorem of Lang and Nishimura.

\begin{theorem}[{see e.g.~\cite[Lemma~1.1]{VA}}]
\label{theorem:Lang-Nishimura}
Let $X$ and $Y$ be smooth projective varieties over an arbitrary field $\KK$.
Suppose that $X$ is birational to $Y$. 
Then $X$ has a $\KK$-point of degree at most~$d$ 
if and only if $Y$ has a $\KK$-point of degree at most~$d$.
\end{theorem}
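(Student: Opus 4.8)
The plan is to reduce the statement to the classical one-directional Lang--Nishimura lemma and then to prove that lemma by induction on dimension. First I would record the dictionary between points of bounded degree and rational points over finite extensions: a closed point $P\in X$ of degree $e$ is the same datum as a $\KK$-morphism $\operatorname{Spec}\kappa(P)\to X$ with $[\kappa(P):\KK]=e$, equivalently a $\kappa(P)$-point of the base change $X_{\kappa(P)}$; conversely, an $\LL$-point of $X_\LL$ for a finite extension $\LL/\KK$ has image a closed point of $X$ whose degree divides $[\LL:\KK]$. Hence ``$X$ has a point of degree at most $d$'' is equivalent to ``$X_\LL(\LL)\ne\varnothing$ for some finite extension $\LL/\KK$ with $[\LL:\KK]\le d$''. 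Since the two implications of the theorem are symmetric in the smooth projective varieties $X$ and $Y$, it suffices to prove one of them.

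So suppose $X$ carries a closed point $P$ of degree $e\le d$, and set $\LL=\kappa(P)$. Because $X$ is smooth and a birational map $X\dashrightarrow Y$ is an isomorphism between dense open subsets, after base change to $\LL$ we obtain a rational map $X_\LL\dashrightarrow Y_\LL$ of smooth projective $\LL$-varieties (geometric integrality of $X$ and $Y$ is preserved by the extension $\LL/\KK$, so $X_\LL$ and $Y_\LL$ are again varieties and the map is birational), together with a smooth $\LL$-point of $X_\LL$ lying over $P$. It then remains to invoke the following Lang--Nishimura lemma over $\LL$: if $W\dashrightarrow Y'$ is a rational map of $\LL$-varieties with $Y'$ proper and $W$ has a smooth $\LL$-point, then $Y'(\LL)\ne\varnothing$. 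Applying it to $X_\LL\dashrightarrow Y_\LL$ yields an $\LL$-point of $Y_\LL$, whose image in $Y$ is a closed point of degree at most $[\LL:\KK]=e\le d$, as wanted.

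Finally I would prove the lemma by induction on $n=\dim W$. The case $n=0$ is immediate, since a rational map out of a zero-dimensional variety is defined everywhere and thus sends the given point to an $\LL$-point of $Y'$. For $n\ge 1$, if the map is already defined at the smooth point $x$, then its value is an $\LL$-point of $Y'$ and we are done; otherwise I would blow up $W$ at $x$. The blow-up $\widetilde W$ is again smooth, hence normal, with exceptional divisor $E\cong\PP^{n-1}_\LL$, and the induced rational map $\widetilde W\dashrightarrow Y'$ agrees with the original one away from $E$. The crucial point is that a rational map from a normal variety to a proper variety is defined outside a closed subset of codimension at least $2$ (this is the valuative criterion of properness applied at the codimension-one points, where the local rings are discrete valuation rings); as $E$ is a divisor, it is not contained in the indeterminacy locus, so the map is defined at the generic point of $E$. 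Restricting to $E\cong\PP^{n-1}_\LL$ then produces a genuine rational map from a variety of dimension $n-1$ that carries smooth $\LL$-points (for instance the coordinate points, which exist over any field) into the proper variety $Y'$, and the inductive hypothesis gives $Y'(\LL)\ne\varnothing$.

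I expect the main obstacle to be precisely the codimension-two definedness statement for rational maps into proper targets: this is what makes the exceptional divisor usable and constitutes the real geometric content of the argument. The surrounding reductions---the degree/field-extension dictionary, the base change, and the symmetry between $X$ and $Y$---are routine, but they must be set up carefully so that the numerical bound $\le d$ on the degree is preserved at every step.
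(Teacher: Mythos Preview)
The paper does not prove this theorem at all; it is stated with a reference to \cite[Lemma~1.1]{VA} and then used as a black box throughout. So there is no ``paper's own proof'' to compare against.

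That said, your argument is the standard one and is correct. The reduction to the classical Lang--Nishimura lemma via the dictionary between closed points of degree $e$ and $\LL$-points of $X_\LL$ with $[\LL:\KK]=e$ is exactly right, and the tower law guarantees that the image in $Y$ of an $\LL$-point has degree dividing $e\le d$. Your inductive proof of the lemma itself---blow up the smooth point, use that a rational map from a normal scheme to a proper target is defined in codimension one, restrict to the exceptional $\PP^{n-1}_\LL$, and recurse---is precisely the textbook proof (this is, in fact, essentially how the cited reference argues). The only place to be slightly careful is your parenthetical that $X_\LL$ and $Y_\LL$ are ``again varieties'': this uses that $X$ and $Y$ are geometrically integral, which is the standard convention for ``variety'' in this context and is harmless here, but if one only assumed integrality over $\KK$ you would need to pass to the component of $X_\LL$ containing the $\LL$-point before applying the lemma.
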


It appears that a cubic surface without $\KK$-points 
cannot be birationally transformed to del Pezzo surfaces of 
degrees~$1$, $2$, $4$, $5$, $7$, and~$8$. 

\begin{lemma}
\label{lemma:points-deg-2} 
Let $S$ be a smooth cubic surface
over a field $\KK$ of characteristic zero 
with~\mbox{$S(\KK)=\varnothing$}.
Then $S$ has no points
of degree $2$ over $\KK$.
\end{lemma}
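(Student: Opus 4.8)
The plan is to argue by contradiction using the geometry of the $27$ lines together with the Lang--Nishimura theorem. Suppose $S$ has a point $P$ of degree $2$, with residue field a quadratic extension $\LL/\KK$. The idea is to project from $P$: blowing up the conjugate pair of geometric points lying over $P$ yields, over $\bar{\KK}$, a standard step of the kind used to pass between del Pezzo surfaces. More precisely, over $\bar\KK$ the blow-up of a cubic surface at two points in general position, followed by contracting the strict transform of the line through them, produces a del Pezzo surface of degree $4$; equivalently, linear projection from the line $\ell$ through the two geometric points of $P$ realizes $S_{\bar\KK}$ birationally as a del Pezzo surface of degree $4$, and this construction descends to $\KK$ because $P$ (hence $\ell$, which is defined over $\LL$ and is Galois-stable as a set only if it is actually $\KK$-rational — here one must be careful) is. The cleanest way to organize this is: first pass to the quadratic extension $\LL$, over which $P$ splits into two $\LL$-points $P_1,P_2$ swapped by $\Gal(\LL/\KK)$; blow up $\{P_1,P_2\}$ to get a surface $\tilde S$ over $\KK$; show $\tilde S$ is a (weak) del Pezzo surface of degree $5$ and that contracting an appropriate Galois-stable configuration of $(-1)$-curves yields a del Pezzo surface $S'$ of degree $4$ (or one lands directly on degree $5$) over $\KK$ that is birational to $S$.

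The key steps, in order, would be: (1) reduce to producing from the degree-$2$ point a birational morphism or birational map from $S$ to a del Pezzo surface $S'$ of degree $4$ or $5$ over $\KK$; (2) analyze the Galois action on the relevant $(-1)$-curves so that the contraction is defined over $\KK$ — here the pair of exceptional curves over $P_1,P_2$ and the line through $P_1,P_2$ form a Galois-stable set, and one needs the intersection pattern to allow a $\KK$-rational contraction; (3) observe that a del Pezzo surface of degree $4$ over a field of characteristic zero always has a point of degree $\le 4$, in fact the analysis should pin down that it has a point of degree $1$ or $2$ — for instance, a del Pezzo surface of degree $4$ or $5$ has a $\KK$-point after a suitable small-degree extension, and more to the point one should be able to exhibit on $S'$ a $\KK$-point or a point of degree $2$ directly (a del Pezzo surface of degree $5$ over any field has a $\KK$-point); (4) apply Theorem~\ref{theorem:Lang-Nishimura}: since $S$ and $S'$ are smooth projective and birational, $S$ has a point of degree at most the one found on $S'$, contradicting $S(\KK)=\varnothing$ if that degree is $1$.

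The main obstacle I expect is step (2)--(3): controlling the Galois action well enough to guarantee that the projection from $P$ descends to a $\KK$-morphism onto an honest del Pezzo surface, and then extracting from that surface a rational point (not merely a point of degree $2$, which would not yet contradict the hypothesis). The degree-$5$ del Pezzo surface route is attractive precisely because such surfaces are known to always have a $\KK$-point (Enriques, Swinnerton-Dyer, Skorobogatov), so if the two-point blow-up and a Galois-stable contraction of three $(-1)$-curves lands on a degree-$5$ del Pezzo surface over $\KK$, we are done immediately by Lang--Nishimura. The delicate point is verifying that the configuration of $(-1)$-curves to be contracted — the two exceptional divisors over $P_1, P_2$ together with one more Galois-conjugate pair, or the strict transform of the line $\ell$ — is genuinely defined over $\KK$ and pairwise disjoint, so that the contraction exists over $\KK$; this is where a careful bookkeeping of the incidence graph of lines on the cubic, and how $\Gal(\LL/\KK)$ permutes the six exceptional curves of $S_{\bar\KK}\to\PP^2_{\bar\KK}$, is required. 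If the naive contraction is obstructed, the fallback is to contract down to degree $4$ and then use that a del Pezzo surface of degree $4$ with a point of degree $2$ has a $\KK$-point after the Amer--Brumer / Springer-type argument on intersections of two quadrics, again yielding the contradiction via Lang--Nishimura.
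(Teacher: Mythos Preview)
Your proposal overshoots a one-line argument and, in the process, introduces a genuine error.

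The line $\ell\subset\PP^3$ through the two geometric points of $P$ is automatically defined over $\KK$: it is the unique line through a $\Gal(\bar\KK/\KK)$-stable pair, hence Galois-invariant, hence $\KK$-rational. You flag this as something to ``be careful'' about and then abandon it, but it is exactly the whole proof. Either $\ell\subset S$, in which case $\ell\cong\PP^1_\KK$ gives $S(\KK)\neq\varnothing$; or $\ell\not\subset S$, in which case $\ell\cap S$ is a degree-$3$ zero-cycle over $\KK$ containing the degree-$2$ point $P$, so the residual point is a $\KK$-point of $S$. That is the paper's proof in its entirety.

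Your alternative route has a concrete mistake in the degrees. Blowing up a cubic surface ($K^2=3$) at two points yields $K^2=1$, not a weak del Pezzo of degree $5$; contracting one further $(-1)$-curve would bring you to $K^2=2$, not $4$. Moreover, the phrase ``strict transform of the line through them'' is ill-posed: unless $\ell$ is one of the $27$ lines, $\ell$ is not a curve on $S$ at all, so there is nothing on $S$ to transform. (If $\ell$ \emph{is} one of the $27$ lines you are already done, as above.) A salvageable variant of your idea would be: the blow-up $\tilde S\to S$ at $P$ has $K_{\tilde S}^2=1$, and a del Pezzo surface of degree $1$ has a canonical $\KK$-point, the base point of $|{-K}|$; then Lang--Nishimura finishes. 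But this requires checking that $\tilde S$ is genuinely del Pezzo (or handling the weak case), which is more work than the direct argument and entirely unnecessary here.
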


\begin{proof}
Suppose that $S$ contains a point $P$ of degree $2$ over $\KK$.
Let $L$ be the line in $\PP^3$ passing through $P$; then $L$ is defined 
over $\KK$ as well. Note that $L$ always has a $\KK$-point.
If $L$ is contained in $S$, then $S$ has a $\KK$-point.
If $L$ is not contained in $S$, then the third intersection point of 
$L_{\bar{\KK}}$ with $S_{\bar{\KK}}$ is 
$\Gal\left(\bar{\KK}/\KK\right)$-invariant 
and thus is defined over $\KK$. 
\end{proof}

\begin{corollary}\label{corollary:1-2}
Let $S$ be a smooth cubic surface
over a field $\KK$ of characteristic zero
with~\mbox{$S(\KK)=\varnothing$}.
Then $S$ is not birational to del Pezzo surfaces of degree $1$ and $2$.
\end{corollary}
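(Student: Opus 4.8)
The plan is to deduce the statement formally from Lemma~\ref{lemma:points-deg-2}, the theorem of Lang--Nishimura (Theorem~\ref{theorem:Lang-Nishimura}), and the standard behaviour of anticanonical linear systems on del Pezzo surfaces of small degree. Suppose for contradiction that $S$ is birational to a del Pezzo surface $Y$ with $K_Y^2\in\{1,2\}$. Both $S$ and $Y$ are smooth and projective, so Theorem~\ref{theorem:Lang-Nishimura} applies to this birational map.

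If $K_Y^2=1$, then $\dim|-K_Y|=1$ and the base locus of $|-K_Y|$ consists of a single reduced point; being the unique base point of a linear system defined over~$\KK$, this point is $\Gal(\bar\KK/\KK)$-invariant, hence is a $\KK$-point of $Y$. By Theorem~\ref{theorem:Lang-Nishimura} applied with $d=1$ the surface $S$ then also has a $\KK$-point, contradicting the hypothesis $S(\KK)=\varnothing$.

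If $K_Y^2=2$, then $|-K_Y|$ is base-point free with $\dim|-K_Y|=2$, so it defines a morphism $Y\to\PP^2$ over~$\KK$ of degree $(-K_Y)^2=2$. The plane $\PP^2$ has a $\KK$-point, and its scheme-theoretic preimage is a zero-dimensional subscheme of $Y$ of length~$2$ defined over~$\KK$, whose support is a point of $Y$ of degree~$1$ or~$2$. Thus $Y$ has a point of degree at most~$2$, and hence so does $S$ by Theorem~\ref{theorem:Lang-Nishimura} with $d=2$. But $S$ has no $\KK$-points by hypothesis and no points of degree~$2$ by Lemma~\ref{lemma:points-deg-2}; this contradiction completes the argument.

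I do not expect any substantial obstacle: the only inputs beyond Lemma~\ref{lemma:points-deg-2} and Lang--Nishimura are the two classical structural facts recalled above about del Pezzo surfaces of degrees~$1$ and~$2$, both of which hold over the ground field because the anticanonical class is defined over~$\KK$.
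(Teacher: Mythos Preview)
Your proof is correct and follows essentially the same approach as the paper: both combine Lang--Nishimura with Lemma~\ref{lemma:points-deg-2} and the fact that a del Pezzo surface of degree $d$ carries a point of degree at most~$d$. The paper simply invokes this last fact as standard, whereas you spell it out for $d=1,2$ via the anticanonical system; one minor imprecision is that for $d=2$ the fibre over a $\KK$-point may have support equal to \emph{two} $\KK$-points rather than a single closed point, but your conclusion that $Y$ has a point of degree at most~$2$ is unaffected.
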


\begin{proof}
A del Pezzo surface of degree $d$ 
over $\KK$ always has a $\KK$-point of degree at most $d$.
Thus the assertion follows from Theorem~\ref{theorem:Lang-Nishimura}
and Lemma~\ref{lemma:points-deg-2}. 
\end{proof}

\begin{lemma}\label{lemma:dP4}
Let $S$ be a smooth cubic surface
over a field $\KK$ of characteristic zero with~\mbox{$S(\KK)=\varnothing$}.
Then $S$ is not birational to del Pezzo surfaces of degree~$4$.
\end{lemma}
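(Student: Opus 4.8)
The plan is to argue by contradiction, assuming $S$ is birational to a del Pezzo surface $T$ of degree $4$. A del Pezzo surface of degree $4$ always has a point of degree at most $4$ (indeed, by the general fact quoted in the proof of Corollary~\ref{corollary:1-2}), so by Theorem~\ref{theorem:Lang-Nishimura} it suffices to rule out points of degree $3$ and $4$ on $S$ (degrees $1$ and $2$ being already excluded by $S(\KK)=\varnothing$ and Lemma~\ref{lemma:points-deg-2}), and to handle the possibility of a point of degree $4$ on $T$ more carefully. In fact the cleanest route is: first show $S$ has no point of degree $3$, and then show a del Pezzo surface of degree $4$ over $\KK$ always has a point of degree $1$, $2$ or $3$ — combining these with Lemma~\ref{lemma:points-deg-2} and Theorem~\ref{theorem:Lang-Nishimura} gives the contradiction.

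For the first part, suppose $P\in S$ has degree $3$ over $\KK$, with residue field $\LL/\KK$ a cubic extension. The three geometric points $P_1,P_2,P_3$ lying over $P$ span a linear subspace of $\PP^3$ defined over $\KK$; generically this is a plane $\Pi\subset\PP^3$ defined over $\KK$. The plane section $\Pi\cap S$ is a cubic curve in $\Pi\cong\PP^2_\KK$ passing through the degree-$3$ point $P$. If the three points are in general position, projection away from them, or rather the residual intersection, produces a lower-degree cycle; more directly, a plane cubic with a rational effective divisor of degree $3$ that is a hyperplane section has $\KK$-rational behaviour one can exploit — a line in $\Pi$ through two of the $P_i$ is not $\Gal(\bar\KK/\KK)$-stable unless it passes through all three, so one studies the $\KK$-line $\ell\subset\Pi$ spanned by $P_1,P_2,P_3$ when they happen to be collinear, giving a third intersection point of $\ell$ with $S$ that is a $\KK$-point, contradiction; and when they are not collinear one uses the group law on the plane cubic $\Pi\cap S$ (a smooth genus-one curve, or handling the singular/reducible cases separately) to produce a $\KK$-point or a point of degree $2$, again contradicting $S(\KK)=\varnothing$ or Lemma~\ref{lemma:points-deg-2}. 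The degenerate configurations (the $P_i$ spanning only a line, or $\Pi\cap S$ being singular or reducible over $\bar\KK$) have to be treated case by case, but each yields a low-degree point by an elementary secant/tangent construction.

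For the second part, one shows that a del Pezzo surface $T$ of degree $4$ over $\KK$ always carries a point of degree $\le 3$. This is standard: $T$ is an intersection of two quadrics in $\PP^4$, and it has a point of degree at most $4$ in general, but one can do better. The pencil of quadrics through $T$ has at least one degenerate member (a quadric of rank $\le 4$) defined over a field of small degree, and a singular quadric cone has a $\KK$-point (its vertex) or a line of low degree; intersecting appropriately and using that a smooth quadric surface with a point of degree $\le 3$ over $\KK$ that carries $T$-cycles produces a point of $T$ of degree $\le 3$. Alternatively, and perhaps more cleanly, one can invoke that $T$ is birational to a cubic surface after blowing down, or appeal directly to known results on small-degree points on degree-$4$ del Pezzo surfaces. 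I expect the main obstacle to be the case analysis in the first part: pinning down precisely which configurations of a degree-$3$ point can occur on a smooth cubic surface and verifying in each that a secant-line or group-law construction on the relevant plane cubic forces a $\KK$-point or a degree-$2$ point. Once that is in hand, the del Pezzo degree-$4$ input is essentially bookkeeping with Theorem~\ref{theorem:Lang-Nishimura}.
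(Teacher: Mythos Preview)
Your approach has a fatal flaw in Step~1: the claim that $S$ has no point of degree~$3$ is simply false. A smooth cubic surface over~$\KK$ always has a closed point of degree at most~$3$ (intersect with any $\KK$-line in~$\PP^3$), and since you have already assumed $S(\KK)=\varnothing$ and invoked Lemma~\ref{lemma:points-deg-2} to rule out degree~$2$, the surface~$S$ \emph{must} have a point of degree exactly~$3$. Your secant/tangent argument cannot succeed: if the three geometric points are collinear on a line $L\not\subset S$, then $L\cap S$ already has degree~$3$ and there is no residual point; if they span a plane~$\Pi$, then $\Pi\cap S$ is a plane cubic carrying the degree-$3$ divisor $P$, but every plane cubic carries degree-$3$ divisors (its line sections) and this imposes no constraint whatsoever---a pointless genus-one curve can perfectly well have degree-$3$ points.

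The paper's proof runs in exactly the opposite direction. It \emph{uses} the degree-$3$ point on~$S$, transports it to the degree-$4$ del Pezzo surface $S'$ via Theorem~\ref{theorem:Lang-Nishimura}, and then performs the plane-section argument on~$S'\subset\PP^4$. The key point is that a plane~$\Pi\subset\PP^4$ meets~$S'$ in a cycle of degree~$4$, so the three points of~$P$ leave a residual $\Gal(\bar\KK/\KK)$-invariant point (or one argues with the possible one-dimensional components of $\Pi\cap S'$). This produces a $\KK$-point on~$S'$, contradicting $S'(\KK)=\varnothing$. The degree discrepancy $4-3=1$ is what makes the argument work on~$S'$ and fail on~$S$; your proposal had the roles of the two surfaces reversed.
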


\begin{proof}
Suppose that 
$S$ is birational to a del Pezzo surface $S'$ of degree~$4$.
Then $S'$ has no $\KK$-points by Theorem~\ref{theorem:Lang-Nishimura}.
However, $S$ has a point of degree~$3$ over $\KK$, and thus 
Theorem~\ref{theorem:Lang-Nishimura} implies that $S'$ also has a point $P$ of 
degree~$3$. 
In this case it is known that $S'$ has a $\KK$-point, see~\cite{Coray}
(alternatively, this can be deduced from a more general 
theorem due to Amer, Brumer, and Leep, see \cite[Theorem~17.14]{EKM}).
In spite of this, we prefer to give an elementary proof.

Let $P_1$, $P_2$, and $P_3$ be the three points of $P_{\bar{\KK}}$.
Consider the anticanonical embedding~\mbox{$S'\hookrightarrow\PP^4$}. 
Suppose that the points $P_1$, $P_2$, and $P_3$ are collinear, and 
consider the line~$L$ 
in~$\PP^4_{\bar{\KK}}$ passing through them. Then $L$ 
is $\Gal\left(\bar{\KK}/\KK\right)$-invariant and thus defined over~$\KK$.
Since it has at least three common points with the surface $S'$,
and $S'$ can be represented as an intersection of two quadrics in $\PP^4$,
we conclude that $L$ is contained in $S'$. Since $L$ has a $\KK$-point, 
we see that $S'$ has a $\KK$-point as well, which gives a contradiction.

Therefore, the points $P_1$, $P_2$, and $P_3$ are not collinear.
Let $\Pi$ be the plane in $\PP^4_{\bar{\KK}}$ passing through 
these three points. Then $\Pi$ 
is $\Gal\left(\bar{\KK}/\KK\right)$-invariant and thus defined over $\KK$.
If the intersection $S'_{\bar{\KK}}\cap\Pi$ 
is zero-dimensional, it is easy to see that 
it consists of~$P_1$,~$P_2$,~$P_3$, and one more point $P_4$.
The point $P_4$ is $\Gal\left(\bar{\KK}/\KK\right)$-invariant 
and thus defined over~$\KK$.
If $C=S'_{\bar{\KK}}\cap\Pi$ is one-dimensional, then the union of its one-dimensional irreducible components is either a line~$L$, 
or a pair of coplanar lines $L_1\cup L_2$, 
or an irreducible conic.
In the first case, the line $L$ 
is defined over $\KK$, and thus contains 
a $\KK$-point. In the second case, the point $L_1\cap L_2$ is defined 
over $\KK$. In the third case, the conic $C$ is defined over $\KK$ and contains 
the point $P$ of degree $3$ over $\KK$, which implies that it also contains 
a $\KK$-point. In every possible case, we see that $S'$ has a $\KK$-point, 
which gives a contradiction.
\end{proof}

\begin{lemma}\label{lemma:dP5-7}
Let $S$ be a smooth cubic surface
over a field $\KK$ of characteristic zero with~\mbox{$S(\KK)=\varnothing$}.
Then $S$ is not birational to del Pezzo surfaces of degree $5$ and $7$.
\end{lemma}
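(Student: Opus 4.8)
The plan is to argue by contradiction: assume $S$ is birational to a del Pezzo surface $S'$ with $K_{S'}^2=5$ or $K_{S'}^2=7$, and rule out each case. The input shared by both is that $S'$ has no closed point of degree $1$ or $2$ over $\KK$; indeed $S$ has no $\KK$-point by hypothesis and no point of degree $2$ by Lemma~\ref{lemma:points-deg-2}, so this follows from the theorem of Lang and Nishimura (Theorem~\ref{theorem:Lang-Nishimura}) applied with $d=2$.

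Consider first the case $K_{S'}^2=7$, which is entirely elementary. Over $\bar{\KK}$ the surface $S'$ is the blow-up of $\PP^2$ at two distinct points, and it contains exactly three lines: two disjoint lines $E_1,E_2$ and a line $L$ meeting each $E_i$ in a single point, $L$ being the only line that meets two other lines. Hence $L$ is invariant under $\Gal\left(\bar{\KK}/\KK\right)$ and therefore defined over $\KK$. The two points $L\cap E_1$ and $L\cap E_2$ are distinct, since $E_1\cap E_2=\varnothing$, and the set $\{L\cap E_1,\ L\cap E_2\}$ is $\Gal\left(\bar{\KK}/\KK\right)$-invariant; thus it is a closed point of $S'$ of degree $1$ or $2$ over $\KK$, contradicting the previous paragraph.

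For the case $K_{S'}^2=5$ I would invoke the classical theorem of Enriques and Swinnerton-Dyer that every del Pezzo surface of degree $5$ over a field has a rational point (see, e.g., \cite{VA}), which contradicts the absence of $\KK$-points on $S'$. Alternatively one can attempt a direct argument: the ten lines on $S'_{\bar{\KK}}$ form a Petersen graph on which $\Gal\left(\bar{\KK}/\KK\right)$ acts through a subgroup of $\SSS_5$ permuting the underlying five-element set, and an analysis of the orbits of this subgroup yields a Galois-stable collection of four pairwise disjoint lines; contracting them produces a Severi--Brauer surface which carries a closed point of degree at most $4$ and hence a closed point of degree prime to $3$, so it is $\PP^2$, whence $S'$ is rational (the remaining orbit types being handled in the same spirit). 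Either way $S'$ has a $\KK$-point, a contradiction.

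The principal difficulty is concentrated in the degree $5$ case: the geometric content there is exactly the Enriques--Swinnerton-Dyer rational point theorem, or else the somewhat delicate Galois-orbit bookkeeping needed to make the argument self-contained. By contrast, the degree $7$ case is a short configuration argument of the same flavour as Lemmas~\ref{lemma:points-deg-2} and~\ref{lemma:dP4}.
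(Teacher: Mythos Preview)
Your main argument is correct and matches the paper's: both deduce the lemma from Lang--Nishimura together with the fact that del Pezzo surfaces of degree~$5$ and~$7$ over any field have a $\KK$-point. The paper simply cites \cite[Theorem~2.5]{VA} for both degrees, whereas you spell out the degree~$7$ case by hand (a nice self-contained version of what that reference does) and invoke Enriques--Swinnerton-Dyer for degree~$5$.

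One caution about your \emph{alternative} sketch for degree~$5$: the claim that the Galois action always stabilises some quadruple of pairwise disjoint lines is not correct. The five such quadruples on a quintic del Pezzo are permuted by $\SSS_5$ as the standard five-element set, so whenever the Galois image is a transitive subgroup of $\SSS_5$ (for instance a cyclic group of order~$5$) none of them is invariant. Your parenthetical ``the remaining orbit types being handled in the same spirit'' would then have to do all the work, and these transitive cases are precisely where the Enriques--Swinnerton-Dyer argument needs a genuinely different idea. Since your primary argument already cites that theorem, this does not affect the validity of the proof, but the alternative as written is incomplete.
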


\begin{proof}
A del Pezzo surface of degree $5$ or $7$
over $\KK$ always has a $\KK$-point,
see for instance~\mbox{\cite[Theorem~2.5]{VA}}.
Thus the assertion follows from Theorem~\ref{theorem:Lang-Nishimura}.
\end{proof}

\begin{lemma}\label{lemma:dP8} 
Let $S$ be a smooth cubic surface
over a field $\KK$ of characteristic zero with~\mbox{$S(\KK)=\varnothing$}.
Then $S$ is not birational to a del Pezzo surface of degree $8$.
\end{lemma}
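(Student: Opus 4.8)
The plan is to argue by contradiction: suppose $S$ is birational to a del Pezzo surface $S'$ of degree $8$. There are two deformation types of such surfaces over $\bar\KK$: $S'_{\bar\KK}\cong\PP^1\times\PP^1$ or $S'_{\bar\KK}\cong\mathbb{F}_1$ (the blow-up of $\PP^2$ at a point). The second type is not minimal — the exceptional curve is the unique $(-1)$-curve, hence $\Gal(\bar\KK/\KK)$-invariant and defined over $\KK$, so $S'$ has a $\KK$-point, contradicting Theorem~\ref{theorem:Lang-Nishimura} together with $S(\KK)=\varnothing$. So I may assume $S'_{\bar\KK}\cong\PP^1\times\PP^1$. (Alternatively one can blow down to land on a del Pezzo surface of lower degree or on $\PP^1\times\PP^1$; the point is to reduce to the quadric surface case.)

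Next I treat $S'$ with $S'_{\bar\KK}\cong\PP^1\times\PP^1$. Such an $S'$ is either minimal or not. If it is not minimal, it contains a $\Gal$-invariant pair of disjoint $(-1)$-curves, or a single invariant one; contracting gives a del Pezzo surface of degree $9$, i.e. $\PP^2$ or a non-trivial Severi--Brauer surface, and then Theorem~\ref{theorem:Lang-Nishimura} applied to $S$ reduces us to showing $S$ is not birational to a Severi--Brauer surface — but that is exactly the content still to be handled elsewhere, so instead I keep $S'$ and use the conic bundle structures on $\PP^1\times\PP^1$. The surface $S'$ carries either one or two pencils of conics (the rulings); these pencils are either individually $\Gal$-invariant or swapped by $\Gal$. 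The key numerical input is that $S$ has a point of degree $3$ (take a general plane section, which is a plane cubic curve, and intersect with a general line in that plane, or simply use that a general line in $\PP^3$ through no point of $S$ meets $S$ in a degree-$3$ scheme), hence by Theorem~\ref{theorem:Lang-Nishimura} so does $S'$; and $S'$ has no $\KK$-point and, by Corollary~\ref{corollary:1-2} transported along the birational map, no point of degree $2$ either. I will derive a contradiction from the existence of a degree-$3$ point $P$ on a del Pezzo surface of degree $8$ with no points of degree $1$ or $2$.

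The heart of the argument: let $P$ be a point of degree $3$ on $S'$ with $S'_{\bar\KK}\cong\PP^1\times\PP^1$. Project $P$ via one of the two rulings $\pi\colon S'\to B$, where $B$ is a conic over $\KK$ (a form of $\PP^1$). The image $\pi(P)$ is a closed point of $B$ of degree dividing $3$, so of degree $1$ or $3$. If $\pi(P)$ has degree $1$, then $B$ has a $\KK$-point, hence $B\cong\PP^1$, and the fiber over that point is a conic over $\KK$ on which $P$ (or part of it) sits — a conic with a closed point of degree dividing $3$ has a point of degree $1$ or $2$, forcing a low-degree point on $S'$, contradiction. So $\pi(P)$ has degree $3$ and $B$ has a point of degree $3$, whence $B\cong\PP^1$ (a conic without rational points has points only in even degrees). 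Now use the \emph{other} ruling $\pi'\colon S'\to B'$ if it is $\Gal$-invariant: the same argument forces $B'\cong\PP^1$ too, so $S'_{\bar\KK}$'s two rulings are both defined over $\KK$, giving a morphism $S'\to\PP^1\times\PP^1$ over $\KK$ which is an isomorphism, so $S'\cong\PP^1\times\PP^1$ has $\KK$-points — contradiction. The remaining subcase is that $\Gal$ swaps the two rulings; then $S'$ is a minimal degree-$8$ del Pezzo with $\rkPic(S')^{\Gal}=1$, and the two conic pencils are conjugate over a quadratic extension $\KK'/\KK$. Over $\KK'$ the surface becomes $\PP^1\times\PP^1$ with its two rulings, and the degree-$3$ point $P_{\KK'}$ splits into points whose degrees over $\KK'$ divide $3$; chasing these through the two projections over $\KK'$ and using that a degree-$3$ point can't split evenly across a quadratic extension, one finds a $\KK$-point or a point of degree $2$ on $S'$, the final contradiction.

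The main obstacle I anticipate is the last subcase, where the Galois action swaps the two rulings of $\PP^1\times\PP^1$: there the geometry is genuinely that of a "twisted" quadric surface, and one must argue carefully about how a degree-$3$ point interacts with the non-split conic-bundle structure (equivalently, with the norm-one torus attached to the quadratic extension $\KK'/\KK$). A clean way to package this is to note that a minimal del Pezzo surface of degree $8$ of this type has $-K_{S'}$ very ample embedding $S'\hookrightarrow\PP^8$ as a surface of degree $8$, and a degree-$3$ point imposes at most $3$ conditions, producing a $\Gal$-invariant linear subspace meeting $S'$ in a scheme of small length whose residual points are again $\Gal$-invariant — mirroring the hyperplane/plane-section bookkeeping used in the proof of Lemma~\ref{lemma:dP4}. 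Once the existence of a $\KK$-point or a point of degree $2$ is extracted, Theorem~\ref{theorem:Lang-Nishimura} and Lemma~\ref{lemma:points-deg-2} close the argument.
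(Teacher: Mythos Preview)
Your overall strategy matches the paper's: reduce to the case $S'_{\bar\KK}\cong\PP^1\times\PP^1$, use that $S$ (hence $S'$) carries a point of degree~$3$ but none of degree~$1$ or~$2$, and project the degree-$3$ point along the rulings to force the base conics to be~$\PP^1$. The split case (both rulings $\Gal$-invariant, so $S'\cong C_1\times C_2$) is handled correctly. Two issues remain.

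First, a minor point: your ``not minimal'' paragraph for the $\PP^1\times\PP^1$ case is vacuous---there are no $(-1)$-curves on $\PP^1\times\PP^1$, so there is nothing to contract; you can simply delete that discussion.

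Second, and more importantly, the swapped-rulings case has a genuine gap. You write ``Over $\KK'$ the surface becomes $\PP^1\times\PP^1$ with its two rulings,'' but this is precisely what must be \emph{proved}---over the quadratic extension $\KK'$ one only knows a priori that $S'_{\KK'}\cong C_1\times C_2$ for conics $C_i$ over~$\KK'$, not that these conics are trivial. The fix is straightforward with the pieces you already have: since $\gcd(2,3)=1$, the degree-$3$ point $P$ remains a single point of degree~$3$ on $S'_{\KK'}$, and then your own split-case argument, applied over~$\KK'$, shows $C_1\cong C_2\cong\PP^1_{\KK'}$. Hence $S'$ acquires a $\KK'$-point, i.e.\ a $\KK$-point of degree at most~$2$, contradicting Lemma~\ref{lemma:points-deg-2}. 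Your final paragraph proposing a $\PP^8$ residual-intersection argument is unnecessary and, as stated, not a proof.

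For comparison, the paper packages the swapped case via Weil restriction: a form of $\PP^1\times\PP^1$ with swapped rulings is $S'\cong R_{\LL/\KK}Q$ for a conic $Q$ over the quadratic extension~$\LL$, and the defining adjunction $S'(\KK)=Q(\LL)$ together with $S'(M)=Q(M\otimes_\KK\LL)$ converts your degree-$3$ point on $S'$ directly into an odd-degree point on~$Q$ over~$\LL$, whence $Q\cong\PP^1_\LL$ and $S'(\KK)\neq\varnothing$. This is the same argument as yours under the hood, but the Weil-restriction formalism makes the descent from $\KK'$ back to $\KK$ automatic.
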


\begin{proof}
Suppose that $S$ is birational to a del Pezzo surface $S'$ of degree $8$.
Then~$S'$ has no points of degree $1$ and $2$ over $\KK$ 
by Theorem~\ref{theorem:Lang-Nishimura}
and Lemma~\ref{lemma:points-deg-2}.  
If $S'_{\bar{\KK}}$ is not isomorphic to~\mbox{$\PP^1\times\PP^1$}, then 
$S$ contains a $(-1)$-curve defined over $\KK$; this implies 
that~$S'$ has a $\KK$-point, which gives a contradiction.
Therefore, we see that~\mbox{$S'_{\bar{\KK}}\cong\PP^1\times\PP^1$}.
Hence the surface $S'$ is isomorphic either 
to a product~\mbox{$C_1\times C_2$},
where~$C_1$ and~$C_2$ are conics over $\KK$, or to 
the Weil restriction of scalars
$R_{\LL/\KK} Q$ of a conic~$Q$ defined over some quadratic
extension $\LL$ of~$\KK$, see e.g.~\mbox{\cite[Lemma~7.3(i)]{SV}}.

Note that $S$ always has a point of degree $3$ over $\KK$.
By Theorem~\ref{theorem:Lang-Nishimura}, this implies 
that~$S'$ has a point of degree $1$ or $3$ over $\KK$.
Since $S'$ has no $\KK$-points, we conclude that it 
has a $\KK$-point of degree~$3$. 
If $S'\cong C_1\times C_2$, then each of the conics $C_i$ contains a point of 
odd degree over $\KK$, and hence $C_1\cong C_2\cong\PP^1$. This means that 
$S'$ has a $\KK$-point, which gives a contradiction.
If $S'\cong R_{\LL/\KK} Q$, then the conic~$Q$ contains a point of degree $3$ 
over $\LL$, and hence is isomorphic to $\PP^1$. Thus $Q$ has 
an $\LL$-point, and so~$S'$ has a $\KK$-point, which again gives 
a contradiction.
\end{proof}

\begin{remark}
If $S$ is a smooth cubic surface over 
a field $\KK$ of characteristic zero with~\mbox{$S(\KK)=\varnothing$},
then $S$ is not birational 
to any surface with a point of degree at most~$2$ 
over~$\KK$ by Lemma~\ref{lemma:points-deg-2} 
and Theorem~\ref{theorem:Lang-Nishimura}.
Therefore, in some cases the assertion of 
Lemma~\ref{lemma:dP8} can be deduced from the existence of 
such a point on the del Pezzo surface~$S'$ of degree~$8$;
for instance, this is always the case when $S'$ is a quadric in $\PP^3$,
or when~\mbox{$\rkPic(S')=1$}, cf.~\mbox{\cite[Lemma~7.3]{SV}}. 
\end{remark}

The next assertion follows from the classification
of Sarkisov links of type~IV, see~\mbox{\cite[Theorem~2.6]{Iskovskikh96}}.

\begin{lemma}\label{lemma:Sarkisov-IV}
Let $S$ be a del Pezzo surface of degree different from $1$, $2$, $4$, and $8$
over a 
field of characteristic zero such that $\rkPic(S)=2$.
Suppose that one of the two extremal contractions from $S$ is a conic bundle.
Then the other extremal contraction is not a conic bundle.
\end{lemma}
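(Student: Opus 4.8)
The plan is to reduce to the classification of del Pezzo surfaces $S$ with $\rkPic(S)=2$ admitting two distinct conic bundle structures, which is exactly the source of Sarkisov links of type IV. Recall that a link of type IV starts and ends with conic bundles, and by the classification (e.g.\ \cite[Theorem~2.6]{Iskovskikh96}) such links exist only in very restricted situations; in the del Pezzo case the relevant surfaces have degree $K_S^2 \in \{4, 8\}$ (the degree-$4$ del Pezzo surfaces with two conic bundle structures, and $\PP^1\times\PP^1$, i.e.\ a form of a quadric of degree $8$), with the degenerate case $K_S^2=1,2$ also sometimes appearing in the literature depending on conventions. So the first step is simply to invoke this classification: if $S$ has $\rkPic(S)=2$ and two conic bundle extremal contractions, then $K_S^2 \in \{1,2,4,8\}$. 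Since by hypothesis the degree of $S$ is different from $1$, $2$, $4$, and $8$, no such $S$ exists, so one of the two extremal contractions cannot be a conic bundle.

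The key steps, in order, are: (1) observe that $\rkPic(S)=2$ means there are exactly two extremal rays in the Mori cone, hence exactly two extremal contractions; (2) if both are conic bundles, then $S$ together with these two fibrations and the trivial links to them realizes a Sarkisov link of type IV over $\Spec\KK$ (both contractions have relative Picard rank $1$ over the point, so the Sarkisov program machinery applies); (3) consult \cite[Theorem~2.6]{Iskovskikh96}, which enumerates all type IV links, and read off that the del Pezzo surfaces occurring in the middle of such a link have anticanonical degree in $\{1,2,4,8\}$; (4) conclude by the degree hypothesis. An alternative, more self-contained route would be: an extremal contraction $S\to B$ that is a conic bundle has $B\cong\PP^1$ or $B$ a conic without $\KK$-points; the generic fibre is a conic, and $-K_S$ restricted to a fibre has degree $2$, so computing $K_S^2$ via the relative setup $K_S = \pi^*K_B + (\text{sum of components of degenerate fibres})$ and using that there are two such structures forces strong numerical constraints; but pushing this through is essentially re-deriving the classification, so I would not pursue it.

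The main obstacle is purely one of citation and convention: \cite[Theorem~2.6]{Iskovskikh96} must be read carefully to extract precisely which del Pezzo degrees occur as the middle surface of a type IV link and to confirm that the statement covers arbitrary (not necessarily algebraically closed, not necessarily perfect beyond characteristic zero) base fields of characteristic zero. One must also be careful that the excluded degrees in the lemma ($1,2,4,8$) match exactly the degrees that \emph{do} admit two conic bundle structures; if the classification produces a surface of some other degree with two conic bundle contractions, the lemma would be false, so the content of the lemma is precisely that the list of such degrees is contained in $\{1,2,4,8\}$. Assuming the cited theorem is stated in sufficient generality — which the phrasing of the excerpt suggests — the proof is then immediate: no del Pezzo surface of the prescribed degree with $\rkPic(S)=2$ can have both extremal contractions be conic bundles, so the second one must be a birational (divisorial) contraction to another del Pezzo surface.
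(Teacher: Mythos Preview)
Your proposal is correct and matches the paper's own treatment exactly: the paper does not give an independent argument but simply states that the assertion follows from the classification of Sarkisov links of type~IV in \cite[Theorem~2.6]{Iskovskikh96}. Your elaboration of why two conic bundle structures on a del Pezzo surface with $\rkPic(S)=2$ amount to a type~IV link, and your caveat about reading the cited theorem carefully, are reasonable expansions of that one-line citation, but the underlying approach is identical.
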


Now we can describe possible birational models of cubic surfaces without 
points. 

\begin{lemma}[{cf. \cite[Corollary~2.5]{Shramov-SB}}] 
\label{lemma:birational-to-SB}
Let $S$ be a smooth cubic surface
over a field $\KK$ of characteristic zero 
with~\mbox{$S(\KK)=\varnothing$}.
The following assertions hold.
\begin{itemize}
\item[(i)] One has $\rkPic(S)\le 3$. 

\item[(ii)] If $\rkPic(S)=3$, then $S$ is birational 
to a non-trivial Severi--Brauer surface.

\item[(iii)] If $\rkPic(S)=2$, then $S$ is birational             
either to a non-trivial 
Severi--Brauer surface, or to a del Pezzo surface $S'$ of degree $6$
with $\rkPic(S')=1$.
\end{itemize}
\end{lemma}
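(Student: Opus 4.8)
The plan is to run the $\Gal\bigl(\bar{\KK}/\KK\bigr)$-equivariant minimal model program on $S$, using the results established earlier in this section to exclude most of the del Pezzo surfaces and conic bundles that could a priori occur. If $\rkPic(S)=1$, then~(i) is trivial and~(ii),~(iii) are vacuous, so we may assume $\rkPic(S)\ge 2$. Two observations will be used repeatedly. First, by Theorem~\ref{theorem:Lang-Nishimura} together with $S(\KK)=\varnothing$ and Lemma~\ref{lemma:points-deg-2}, every smooth projective surface birational to $S$ has no point of degree $1$ or~$2$ over $\KK$; hence, if such a surface is a del Pezzo surface $T$, then $\deg T\in\{3,6,9\}$ (degrees $1$, $2$ being excluded by Corollary~\ref{corollary:1-2}, degree $4$ by Lemma~\ref{lemma:dP4}, degrees $5$, $7$ by Lemma~\ref{lemma:dP5-7}, degree $8$ by Lemma~\ref{lemma:dP8}), and if $\deg T=9$ then $T$ is a non-trivial Severi--Brauer surface (as $\PP^2$ has a $\KK$-point), so $\rkPic(T)=1$. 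Second, if $E$ is a $\Gal\bigl(\bar{\KK}/\KK\bigr)$-orbit of $k$ pairwise disjoint $(-1)$-curves on such a del Pezzo surface $T_{\bar{\KK}}$, then contracting $E$ produces a smooth projective surface birational to $S$ carrying a point of degree $k$, whence $k\ge 3$.

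The key step is the following. A del Pezzo surface $T$ with $\rkPic(T)\ge 2$ admits a $\Gal\bigl(\bar{\KK}/\KK\bigr)$-equivariant extremal contraction, which is either a conic bundle over a curve --- necessarily a Mori fibration, whence $\rkPic(T)=2$ --- or a birational morphism contracting a single $\Gal\bigl(\bar{\KK}/\KK\bigr)$-orbit of pairwise disjoint $(-1)$-curves onto a del Pezzo surface whose degree exceeds $\deg T$ by the size of that orbit (the contracted curves form one orbit because the classes of distinct Galois-invariant families of pairwise disjoint $(-1)$-curves are linearly independent). Moreover, if $\deg T\in\{3,6\}$ and $\rkPic(T)=2$, then, since $\deg T\notin\{1,2,4,8\}$, Lemma~\ref{lemma:Sarkisov-IV} forbids $T$ from carrying two conic bundle structures; so in all cases $T$ admits a birational extremal contraction.

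Applying this to $S$, and then, if necessary, to the resulting del Pezzo surface of degree $6$ --- which carries at most $3$ pairwise disjoint $(-1)$-curves, its six lines forming a hexagon --- and using that each contracted orbit has size $\ge 3$, one obtains a chain $S\to S'$, or $S\to S'\to S''$, of birational extremal contractions with $\deg S'=3+k\in\{6,9\}$ and $\rkPic(S')=\rkPic(S)-1$. If $\deg S'=9$, then $\rkPic(S')=1$, so $\rkPic(S)=2$ and $S$ is birational to the non-trivial Severi--Brauer surface $S'$. If $\deg S'=6$ and $\rkPic(S')=1$, then $\rkPic(S)=2$ and $S$ is birational to a del Pezzo surface of degree $6$ with Picard rank $1$. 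If $\deg S'=6$ and $\rkPic(S')\ge 2$, then $S'$ admits a birational extremal contraction $S'\to S''$ contracting exactly $3$ pairwise disjoint $(-1)$-curves, so $\deg S''=9$ and $\rkPic(S'')=1$; hence $\rkPic(S')=2$, $\rkPic(S)=3$, and $S$ is birational to the non-trivial Severi--Brauer surface $S''$. In particular $\rkPic(S)\le 3$, which is~(i); if $\rkPic(S)=3$ we are in the last case, so $S$ is birational to a non-trivial Severi--Brauer surface, which is~(ii); and if $\rkPic(S)=2$ then $S$ is birational either to a non-trivial Severi--Brauer surface or to a del Pezzo surface of degree $6$ with Picard rank $1$, which is~(iii).

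The step that I expect to require the most care is the precise description of the extremal contractions over the non-closed field $\KK$ used above: the dichotomy between conic bundles and birational morphisms; the fact that a birational extremal contraction of a del Pezzo surface again lands on a del Pezzo surface, of degree increased by the number of contracted curves; and the control of which $(-1)$-curves are contracted in a single extremal ray. This rests on Iskovskikh's classification of extremal contractions and Sarkisov links of rational surfaces --- of which Lemma~\ref{lemma:Sarkisov-IV} is one instance --- together with the standard combinatorics of lines on del Pezzo surfaces of degrees $3$ and $6$.
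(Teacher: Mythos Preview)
Your argument is correct and follows essentially the same route as the paper: run the Galois-equivariant MMP from $S$, use Lemma~\ref{lemma:Sarkisov-IV} to guarantee a birational extremal contraction when $\rkPic=2$, and invoke Lemmas~\ref{lemma:dP4}, \ref{lemma:dP5-7}, \ref{lemma:dP8} to force the target degrees into $\{6,9\}$; then repeat from degree~$6$. The paper's proof is just a terser version of yours; your extra observation that a contracted Galois orbit has size $\ge 3$ (via Theorem~\ref{theorem:Lang-Nishimura} and Lemma~\ref{lemma:points-deg-2}) and the hexagon remark for degree~$6$ are correct but redundant once the degree-exclusion lemmas are in place.
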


\begin{proof}
Suppose that $\rkPic(S)\ge 2$. 
If $\rkPic(S)\ge 3$, then there is an extremal contraction~\mbox{$S\to S'$}
to a smooth surface $S'$.
If $\rkPic(S)=2$, then there also
exists an extremal contraction~\mbox{$S\to S'$} to a smooth surface $S'$ by
Lemma~\ref{lemma:Sarkisov-IV}.
In both cases $S'$ is a 
a del Pezzo surface of degree $d'>3$ with   
$$
\rkPic(S')=\rkPic(S)-1.
$$ 
Note that $S'$ has no $\KK$-points by Theorem~\ref{theorem:Lang-Nishimura}. 
Furthermore, by Lemmas~\ref{lemma:dP4}, \ref{lemma:dP5-7}, and~\ref{lemma:dP8},
one has $d'=6$ or $d'=9$. 
In particular, this proves 
assertion~(iii).

Now suppose that $\rkPic(S)\ge 3$. Then $\rkPic(S')\ge 2$, 
and hence $d'$ cannot be equal to~$9$;
so, we have $d'=6$. 
As before, it follows from Lemma~\ref{lemma:Sarkisov-IV}
that there exists an extremal contraction $S'\to S''$
to a del Pezzo surface~$S''$ of degree $d''>6$
with 
$$
\rkPic(S'')=\rkPic(S')-1.
$$
Applying Lemmas~\ref{lemma:dP5-7} and~\ref{lemma:dP8}, 
we conclude that $d''$ cannot be equal to $7$ and $8$.
Therefore, one has~\mbox{$d''=9$}, so that $S''$ is a Severi--Brauer surface.
In particular, one has $\rkPic(S'')=1$, and hence~\mbox{$\rkPic(S)=3$}. 
This proves assertions~(i) and~(ii). 
\end{proof}

We will complete a classification 
of birational models of cubic surfaces without points 
in Section~\ref{section:dP6}, where we prove  
Proposition~\ref{proposition:all-birational-models}.

For the next result we refer the reader to  
\cite[Theorem~V.5.1]{Manin-CubicForms}
and \cite[Theorem~1.5.6]{Cheltsov-BR};
it can be also deduced from \cite[Theorem~2.6]{Iskovskikh96}.

\begin{theorem}\label{theorem:minimal-cubic}
Let $S$ be a smooth cubic surface
over a field $\KK$ of characteristic zero
such that $\rkPic(S)=1$. Let $S'$ be a smooth surface over $\KK$ 
such that~$S'$ is either a del Pezzo 
surface with $\rkPic(S')=1$, or a conic bundle with $\rkPic(S')=2$.
Suppose that $S'$ is birational to $S$. 
Then $S'\cong S$. Moreover, if $S(\KK)=\varnothing$, then 
$\Bir(S)=\Aut(S)$. 
\end{theorem}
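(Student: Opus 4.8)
The plan is to combine the Sarkisov-type rigidity of minimal cubic surfaces with the non-existence results for points of small degree established in Lemmas~\ref{lemma:points-deg-2}--\ref{lemma:dP8}. The first assertion, that any del Pezzo surface $S'$ with $\rkPic(S')=1$ or conic bundle $S'$ with $\rkPic(S')=2$ birational to a minimal cubic surface $S$ is in fact isomorphic to $S$, is a known rigidity statement: a smooth cubic surface with $\rkPic(S)=1$ is a minimal del Pezzo surface, and by the classification of Sarkisov links (see~\mbox{\cite[Theorem~2.6]{Iskovskikh96}}) or by the birational rigidity results in~\mbox{\cite[Theorem~V.5.1]{Manin-CubicForms}} and~\mbox{\cite[Theorem~1.5.6]{Cheltsov-BR}}, the only minimal rational surface in its birational class of the indicated shapes is $S$ itself. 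So the real content for this paper is the second assertion, namely that $S(\KK)=\varnothing$ forces $\Bir(S)=\Aut(S)$, and I would concentrate the argument there.

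First I would let $f\colon S\dasharrow S$ be an arbitrary birational self-map and apply the Sarkisov program to factor $f$ into a composition of elementary links between minimal rational surfaces (del Pezzo surfaces of Picard rank $1$ and conic bundles of Picard rank $2$). By the first part of the theorem, the only minimal model in the birational class of $S$ of the allowed types is $S$ itself, so in principle one must still rule out conic bundle intermediate models. This is where Lemma~\ref{lemma:points-deg-2} and Theorem~\ref{theorem:Lang-Nishimura} do the work: since $S(\KK)=\varnothing$ and $S$ has no point of degree $2$ over $\KK$, any surface birational to $S$ has no $\KK$-point and no point of degree $2$ over $\KK$; but a conic bundle over a $\KK$-rational base (which $\PP^1$ is here, since a conic bundle over a conic without $\KK$-points would itself violate Lemma~\ref{lemma:points-deg-2} after looking at a fibre) always carries a point of degree $2$, coming from a fibre. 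Hence no conic bundle can appear in the birational class of $S$, every link in the Sarkisov decomposition of $f$ is a link between copies of $S$, and such links are, by the rigidity statement again, isomorphisms. Therefore $f$ is induced by an automorphism of $S$, i.e. $\Bir(S)=\Aut(S)$.

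The main obstacle I anticipate is the careful bookkeeping around conic bundle models: one must confirm that a conic bundle $S'\to B$ birational to $S$ necessarily has $B\cong\PP^1$ (ruling out a base which is a pointless conic — but such a base would force $S'$, hence $S$, to have no point of degree dividing $2$, yet a generic fibre of $S'\to B$ over a closed point of $B$ of degree $d$ gives a point of degree $2d$ or $d$ on $S'$, and for $B$ a conic the base has points of every even degree, producing a point of degree $2$ on $S'$ and a contradiction), and then that the existence of a section or multisection of small degree is genuinely obstructed. Concretely, every conic bundle over $\PP^1$ has fibres, and a smooth fibre is a conic over $\KK$ with a point of degree $2$; a singular fibre is a union of two lines meeting in a $\KK$-point or a conjugate pair, in either case yielding a $\KK$-point or a point of degree $2$. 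Either way Lemma~\ref{lemma:points-deg-2} is contradicted, so this step, once spelled out, closes the argument. An alternative, cleaner route that avoids re-deriving this is simply to invoke~\mbox{\cite[Theorem~2.6]{Iskovskikh96}} directly: it lists all Sarkisov links out of a minimal cubic surface, and none of them starts from $S$, which immediately gives both assertions; I would present that as the short proof and relegate the Sarkisov-program discussion above to a remark.
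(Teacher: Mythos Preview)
The paper does not give its own proof of this theorem: it simply states the result and refers the reader to \cite[Theorem~V.5.1]{Manin-CubicForms}, \cite[Theorem~1.5.6]{Cheltsov-BR}, and \cite[Theorem~2.6]{Iskovskikh96}. So there is nothing to compare your argument against except those references, and your proposal is essentially an attempt to reconstruct what lies behind them.

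Your outline for the first assertion (cite rigidity) matches what the paper does. But your argument for the second assertion has a genuine gap. After observing that every intermediate model in the Sarkisov decomposition of $f$ is isomorphic to $S$, you write that ``such links are, by the rigidity statement again, isomorphisms.'' This does not follow. Rigidity tells you the \emph{models} are all isomorphic to $S$; it says nothing about whether an individual link $S\dasharrow S$ is biregular. In fact, when $S$ does have $\KK$-points, there are plenty of non-trivial Sarkisov self-links of a minimal cubic (Geiser and Bertini type involutions), so the step you claim is exactly the content of the second assertion, not a consequence of the first. The whole discussion about ruling out conic bundle models is a red herring: conic bundles with $\rkPic=2$ are already excluded by the first assertion, so the work that remains is to show that no non-trivial link $S\dasharrow S$ exists, and for that you must use that the centre of any such link is a point of degree at most $2$ (this is what \cite[Theorem~2.6]{Iskovskikh96} gives you), together with Lemma~\ref{lemma:points-deg-2}. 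Your ``alternative, cleaner route'' is closer to the mark, but the phrase ``none of them starts from $S$'' is wrong as written: links \emph{do} start from minimal cubics in general, and what you need to say is that every link listed by Iskovskikh out of a minimal cubic has centre of degree $\le 2$, hence is unavailable when $S(\KK)=\varnothing$.

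If you want a self-contained proof in the spirit of the paper, note that Lemma~\ref{lemma:BR} carries out exactly this argument (via the Noether--Fano inequality) in the $G$-equivariant setting; specialising it to $G=\{1\}$ gives the second assertion directly.
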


\begin{corollary}\label{corollary:minimal-cubic}
Let $S$ be a smooth cubic surface
over a field $\KK$ of characteristic zero
such that $S(\KK)=\varnothing$ and $\rkPic(S)=1$. 
Let $S'$ be a smooth surface over $\KK$
such that~$S'$ is either a del Pezzo
surface or a conic bundle. 
Suppose that $S'$ is birational to $S$.
Then~\mbox{$S'\cong S$}. In particular, $S'$ is not a conic bundle. 
\end{corollary}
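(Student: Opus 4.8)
The plan is to run the Minimal Model Program on $S'$ so as to reduce to one of the two configurations treated in Theorem~\ref{theorem:minimal-cubic}, and then to use the constraints $\rkPic(S)=1$ and $S(\KK)=\varnothing$ to exclude everything else. Note at the outset that $S'$ is geometrically rational, being birational to the smooth cubic surface $S$. I would first show that $S'$ cannot be a conic bundle. Assume it is, say $S'\to B$. Since the function field of a geometrically rational surface is purely transcendental over $\KK$, L\"{u}roth's theorem shows that any intermediate field of transcendence degree $1$ is again purely transcendental; hence $B$ is a geometrically rational curve of genus $0$, so $\rkPic(B)=1$. Contracting $\KK$-rational $(-1)$-curves contained in fibres (the relative Minimal Model Program over $B$) gives a birational morphism $S'\to S''$ onto a relatively minimal conic bundle $S''\to B$, for which $\rkPic(S'')=\rkPic(B)+1=2$. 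As $S''$ is birational to $S$, Theorem~\ref{theorem:minimal-cubic} forces $S''\cong S$, which contradicts $\rkPic(S)=1$. Hence $S'$ is not a conic bundle; this already gives the final assertion of the statement.

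It remains to treat the case when $S'$ is a del Pezzo surface. Running the Minimal Model Program for $S'$ --- which in dimension two consists of successive birational contractions until a Mori fibre space is reached --- yields a birational morphism $\psi\colon S'\to\hat S$ onto a $\KK$-minimal geometrically rational surface $\hat S$; thus $\hat S$ is either a conic bundle with $\rkPic(\hat S)=2$, or a del Pezzo surface with $\rkPic(\hat S)=1$. The first case is impossible by the previous paragraph, so $\hat S$ is a del Pezzo surface with $\rkPic(\hat S)=1$ birational to $S$, and Theorem~\ref{theorem:minimal-cubic} gives $\hat S\cong S$; in particular $K_{\hat S}^2=3$. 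If $\psi$ were not an isomorphism it would contract at least one $(-1)$-curve over $\bar{\KK}$, whence $K_{S'}^2\le K_{\hat S}^2-1=2$; since $S'$ is a del Pezzo surface, $K_{S'}^2$ would then be equal to $1$ or $2$, so $S'$ would be a del Pezzo surface of degree $1$ or $2$ birational to $S$, contradicting Corollary~\ref{corollary:1-2}. Therefore $\psi$ is an isomorphism and $S'\cong S$.

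The only step that is not pure Minimal Model Program bookkeeping is the exclusion of conic bundles: once this is known, the rest uses only that, by Theorem~\ref{theorem:minimal-cubic}, $S$ is the sole $\KK$-minimal model in its birational class, together with $\rkPic(S)=1$ and Corollary~\ref{corollary:1-2}. One could alternatively rule out conic bundles over $\PP^1$ at once --- by the Remark following Lemma~\ref{lemma:dP8}, no surface birational to $S$ carries a point of degree at most $2$ over $\KK$, while the fibre over a $\KK$-point of $\PP^1$ would be a conic over $\KK$ and hence would contain one --- but the base $B$ need not have a $\KK$-point, so passing to the relatively minimal model over $B$ is the cleaner route.
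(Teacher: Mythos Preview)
Your argument follows the same route as the paper's: pass to a relatively minimal conic bundle of Picard rank~$2$ and invoke Theorem~\ref{theorem:minimal-cubic} to exclude conic bundles; then, in the del Pezzo case, run the MMP, identify the output with $S$ via Theorem~\ref{theorem:minimal-cubic}, and use Corollary~\ref{corollary:1-2} to force $\psi$ to be an isomorphism. The organization differs only cosmetically (the paper first bounds $d'\ge 3$ and then contracts to reach a del Pezzo of degree $>3$ with Picard rank~$1$, whereas you contract first and bound the degree of $S'$ afterwards).

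One slip to fix: the sentence ``the function field of a geometrically rational surface is purely transcendental over~$\KK$'' is false---indeed, the whole paper concerns geometrically rational surfaces that are \emph{not} $\KK$-rational (non-trivial Severi--Brauer surfaces, pointless cubics). What is true, and all you need, is that $S'_{\bar{\KK}}$ is rational; then $B_{\bar{\KK}}$ is dominated by a rational surface and hence has genus~$0$ (for instance because $H^0(B_{\bar{\KK}},\Omega^1)\hookrightarrow H^0(S'_{\bar{\KK}},\Omega^1)=0$), so $B$ is a conic and $\rkPic(B)=1$. With that correction the proof goes through.
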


\begin{proof}
If there is a conic bundle structure 
$S'\to C$, we can perform extremal contractions
over $C$ if necessary and assume that $\rkPic(S')=2$; this is impossible
by Theorem~\ref{theorem:minimal-cubic}.
Therefore, we will assume that $S'$ is a del Pezzo surface. Let $d'$ 
be the degree of $S'$. 
Note that~\mbox{$d'\ge 3$} by Corollary~\ref{corollary:1-2}. 
Suppose that either $d'>3$, or $d'=3$ and $\rkPic(S')>1$. 
We can perform several extremal contractions if necessary 
and obtain a del Pezzo surface~$S''$ of degree $d''>3$ and 
$\rkPic(S'')=1$. 
The surface $S''$ is birational to $S$, which is again impossible by 
Theorem~\ref{theorem:minimal-cubic}.
\end{proof}

The following result is a partial generalization of 
Theorem~\ref{theorem:minimal-cubic} to the equivariant setting;
it is well known to experts.

\begin{lemma}
\label{lemma:BR}
Let $S$ be a smooth cubic surface
over a field $\KK$ of characteristic zero 
with~\mbox{$S(\KK)=\varnothing$}.
Let $G$ be a  group acting on 
$S$ such that $\rkPic(S)^G=1$. 
Let $S'$ be a del Pezzo surface 
over $\KK$ with $\rkPic(S')^G=1$,  
and let $\xi\colon S\dashrightarrow S'$ be a $G$-equivariant 
birational map.  
Then $\xi$ is an isomorphism.
\end{lemma}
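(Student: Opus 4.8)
The plan is to run the $G$-equivariant Minimal Model Program on the birational map $\xi\colon S\dashrightarrow S'$ and use the Sarkisov program (in dimension $2$, over a field of characteristic zero) to factor $\xi$ into elementary links, then check that each link is forced to be an isomorphism under the hypotheses $\rkPic(S)^G=1$ and $\rkPic(S')^G=1$. First I would recall that, since $\rkPic(S)^G=1$, the surface $S$ is a $G$-minimal del Pezzo surface, and likewise $S'$ is $G$-minimal; so both are endpoints of the $G$-MMP, and $\xi$ decomposes as a chain of Sarkisov links between $G$-Mori fiber spaces (here all of them are del Pezzo surfaces with $\rkPic(\cdot)^G=1$, since a $G$-conic bundle over $\PP^1$ would force a $G$-point of degree $\le 2$ — see the Remark after Lemma~\ref{lemma:dP8} together with Theorem~\ref{theorem:Lang-Nishimura} — which $S$ does not have). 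The key numerical input is that the anticanonical degree is a birational invariant up to the Sarkisov links that increase it, so the relevant invariant to track is $K_S^2=3$ together with the structure of the $G$-orbits of $(-1)$-curves on $S_{\bar\KK}$.

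The main step is then the following dichotomy for a single Sarkisov link $\psi\colon S_1\dashrightarrow S_2$ in the chain, where $S_1$ is $G$-minimal of degree $3$: by the Noether--Manin-type analysis of Sarkisov links between $G$-del Pezzo surfaces of degree $\le 3$ (this is exactly the content quoted via \cite[Theorem~V.5.1]{Manin-CubicForms}, \cite[Theorem~1.5.6]{Cheltsov-BR}, and \cite[Theorem~2.6]{Iskovskikh96} in Theorem~\ref{theorem:minimal-cubic}), such a link must be centered at a $G$-orbit of points on $S_1$ of total degree $\le 2$, or at a specific configuration whose existence again produces a $G$-invariant point of small degree, or at a $G$-invariant set of $(-1)$-curves. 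Each of these is excluded: the first two by Lemma~\ref{lemma:points-deg-2} (a cubic surface with $S(\KK)=\varnothing$ has no points of degree $\le 2$, hence no $G$-orbit of such total degree could be blown up), and the last by $\rkPic(S)^G=1$, which prevents the existence of a proper nonempty $G$-invariant set of $(-1)$-curves that could serve as a link center. Therefore no nontrivial link can emanate from a $G$-minimal cubic surface without $\KK$-points, so the Sarkisov chain is empty and $\xi$ is an isomorphism; in particular $S'\cong S$ and $\rkPic(S')=1$.

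I expect the main obstacle to be the careful bookkeeping of \emph{which} Sarkisov links of type I, II, III, IV are a priori available starting from a $G$-del Pezzo surface of degree $3$, and verifying that in each case the link center has degree $\le 2$ over $\KK$ (so that Lemma~\ref{lemma:points-deg-2} applies) or is a $G$-invariant curve class incompatible with $\rkPic(S)^G=1$. The type~IV links require separate attention, but these are handled by Lemma~\ref{lemma:Sarkisov-IV} together with the observation that a $G$-conic bundle target is already ruled out. One subtlety worth spelling out is that we must apply Lemma~\ref{lemma:points-deg-2} not only to $S$ itself but to every intermediate $G$-del Pezzo surface of degree $3$ appearing in the chain; this is legitimate because each such surface is birational to $S$ and hence, by Theorem~\ref{theorem:Lang-Nishimura}, also has no points of degree $\le 2$ over $\KK$.
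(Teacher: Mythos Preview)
Your approach via the $G$-equivariant Sarkisov program is valid, but it is more elaborate than the paper's argument, which applies the Noether--Fano inequality directly. The paper takes the proper transform $\LLL\sim_{\QQ}-\theta K_S$ of a very ample $G$-invariant linear system on $S'$; if $\xi$ is not an isomorphism then some closed point $P$ satisfies $\mult_P(\LLL)>\theta$, and the intersection computation $3\theta^2=L_1\cdot L_2\ge r\,\mult_P(\LLL)^2> r\theta^2$ with $r=\deg P$ forces $r\le 2$, contradicting Lemma~\ref{lemma:points-deg-2}. The two routes are of course closely related---the Noether--Fano bound is exactly what pins the center of any Sarkisov link from a $G$-minimal cubic to degree at most~$2$---but the direct version never needs to discuss intermediate Mori fiber spaces, so the issues you raise about conic bundle targets and about propagating Lemma~\ref{lemma:points-deg-2} along the chain simply do not arise: once no point of degree $\le 2$ exists on $S$, you are done.

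One slip worth flagging: your exclusion of conic bundle intermediate steps (``a $G$-conic bundle over $\PP^1$ would force a $G$-point of degree $\le 2$'') is not supported by the Remark after Lemma~\ref{lemma:dP8} that you cite, and is not true as stated---the base need not be $\PP^1$, and even over $\PP^1$ a conic bundle need not carry points of small degree. This does not actually break your argument, because a link of type~I out of $S$ would still begin by blowing up a $G$-orbit of points, and the degree bound on that orbit (which is the real content) already rules it out regardless of what the target is; but the sentence as written should be removed or replaced by a direct appeal to the classification in \cite[Theorem~2.6]{Iskovskikh96}.
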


\begin{proof}
Suppose that $\xi$ is not an isomorphism. 
Choose a very ample $G$-invariant 
linear system $\LLL'$ on $S'$, and let $\LLL$ be its proper transform
on $S$. Then $\LLL$ is a mobile non-empty 
$G$-invariant linear system on $S$. Since $\rkPic(S)^G=1$, we can write
$$
\LLL\sim_{\QQ} -\theta K_S
$$
for some positive rational number $\theta$.
It follows from the Noether--Fano inequalities 
(see~\mbox{\cite[Lemma~2.4(ii)]{Iskovskikh96}}) 
that one has $\mult_{P}(\LLL)>\theta$ for some point $P$ on $S$. 
Let~$r$ be the degree of $P$, and let
$L_1$ and $L_2$ be two general members of the linear system~$\LLL$ (defined over~$\bar{\KK}$).
We see that
$$
3\theta^2=L_1\cdot L_2\ge r\mult_{P}(\LLL)^2>r\theta^2,
$$
and thus $r\le 2$. On the other hand, $S$ has no points
of degree $1$ and $2$ over $\KK$ by Lemma~\ref{lemma:points-deg-2}. 
The obtained contradiction shows that $\xi$ is an isomorphism.
\end{proof}

\section{Main example}
\label{section:main-example}

In this section we provide a construction that proves 
Theorem~\ref{theorem:main}.

Let $\Bbbk$ be an algebraically closed field of characteristic zero, and let 
$\KK=\Bbbk(\lambda,\mu)$, where~$\lambda$ and $\mu$ are 
transcendental variables.
Consider the cubic surface $S$ over $\KK$ given in 
the projective space $\PP^3$ with homogeneous coordinates 
$x$, $y$, $z$, and $t$ by the equation
\begin{equation*}%\label{eq:lambda}
\lambda x^3+\lambda^2 y^3+\mu z^3+\mu^2 t^3=0.
\end{equation*}
It is straightforward to check that $S$ has no $\KK$-points. 
Let $\omega$ be a non-trivial cubic root of unity. 
Consider the linear transformations 
\begin{equation*}%\label{eq:sigma} 
\begin{aligned}
&\sigma_1\colon (x:y:z:t)\mapsto (\omega x:y:z:t),\\
&\sigma_2\colon (x:y:z:t)\mapsto (x:\omega y:z:t),\\
&\sigma_3\colon (x:y:z:t)\mapsto (x:y:\omega z:t).
\end{aligned}
\end{equation*}
Then $\sigma_1$, $\sigma_2$, and $\sigma_3$ generate a subgroup $G$ of 
$\Aut(S)$ isomorphic to $\mumu_3^3$.

Recall that the action of the groups~\mbox{$\Aut(S)$} 
and $\Gal\left(\bar{\KK}/\KK\right)$ 
on $(-1)$-curves on $S'$ defines homomorphisms
of these groups to the Weyl group~$\WE$. Furthermore, the homomorphism
$\Aut(S)\to\WE$ is an embedding, see~\cite[Corollary~8.2.40]{Dolgachev}.
Thus we can identify the group $\Aut(S)$ with its image in $\WE$. 
In the notation of \cite{Carter}, 
the elements $\sigma_i$, $1\le i\le 3$, 
have type~$\mathrm{A}_2^3$; this follows from the 
fact that the quotient of $S_{\bar{\KK}}$ by the group 
$G_i\cong \mumu_3$ generated 
by the element $\sigma_i$ is isomorphic to~$\PP^2$,
and thus~\mbox{$\rkPic(S_{\bar{\KK}})^{G_i}=1$}.

Let $\gamma_{\lambda}$ and $\gamma_{\mu}$ be the images in $\WE$ 
of generators of the Galois 
groups~\mbox{$\Gal\left(\KK\left(\sqrt[3]{\lambda}\right)/\KK\right)$} 
and 
$\Gal\left(\KK\left(\sqrt[3]{\mu}\right)/\KK\right)$.
Then $\gamma_{\lambda}$ and $\gamma_{\mu}$ are elements of order $3$ that 
commute with each other. 
Furthermore, 
the $27$ lines on $S_{\bar{\KK}}$ are given by equations
$$
\begin{aligned}
\sqrt[3]{\lambda}x+\omega^i\sqrt[3]{\lambda^2}y=
\sqrt[3]{\mu}z+\omega^j\sqrt[3]{\mu^2}t=0,& \\
\sqrt[3]{\lambda}x+\omega^i\sqrt[3]{\mu}z=
\sqrt[3]{\lambda^2}y+\omega^j\sqrt[3]{\mu^2}t=0,& \\
\sqrt[3]{\lambda}x+\omega^i\sqrt[3]{\mu^2}t=
\sqrt[3]{\lambda^2}y+\omega^j\sqrt[3]{\mu}z=0,& \\ 
i,j\in\{0,1,2\}.&
\end{aligned}
$$
Thus each of the~$27$ lines on $S_{\bar{\KK}}$
is defined over the field $\KK\left(\sqrt[3]{\lambda},\sqrt[3]{\mu}\right)$. 
Hence  the image of the whole Galois group $\Gal\left(\bar{\KK}/\KK\right)$ 
in the Weyl group $\WE$ is the group $\Gamma\cong\mumu_3^2$ 
generated by~$\gamma_{\lambda}$ and~$\gamma_{\mu}$. 

Note that the group $\Gamma$ can be generated by the elements 
$$
\gamma_1=\gamma_{\lambda}\gamma_{\mu} \quad \text{and} \quad 
\gamma_2=\gamma_{\lambda}\gamma_{\mu}^{-1}.
$$
The element $\gamma_1$ has nine invariant lines on $S_{\bar{\KK}}$, and the 
remaining $18$ lines  
split into a union of six $\gamma_1$-invariant 
triples of pairwise disjoint lines. This means 
that $\gamma_1$ has type~$\mathrm{A}_2$ in the notation of~\cite{Carter};
indeed, the other options for an element of order $3$ are 
types~$\mathrm{A}_2^2$ and~$\mathrm{A}_2^3$, 
but elements of any of these two types 
have no invariant lines on~$S_{\bar{\KK}}$.
Similarly, the element $\gamma_2$ also has type~$\mathrm{A}_2$.
Therefore, for $i=1,2$ the subspace $V_i$ of the $\mathbb{C}$-vector space 
$$
V=\Pic(S_{\bar{\KK}})\otimes\mathbb{C} 
$$ 
spanned by the eigen-vectors of $\gamma_i$ 
with non-trivial eigen-values has dimension~$2$. 
Denote by~\mbox{$\langle V_1, V_2\rangle$}
the subspace in $V$ generated by $V_1$ and $V_2$. 
Since $\Gamma$ is an abelian group generated by $\gamma_{\lambda}$ 
and $\gamma_{\mu}$, we can bound the rank of the $\Gamma$-invariant 
sublattice $\Pic(S_{\bar{\KK}})^{\Gamma}$ of~\mbox{$\Pic(S_{\bar{\KK}})$}
as 
$$
\rkPic(S_{\bar{\KK}})^{\Gamma}=
\dim V-\dim \langle V_1, V_2\rangle \ge 
\dim V-\dim V_1-\dim V_2=7-2-2=3, 
$$
so that 
$$
\rkPic(S)=\rkPic(S_{\bar{\KK}})^{\Gamma}\ge 3.
$$
In particular, $S$ is birational 
to some non-trivial Severi--Brauer surface $S_0$ 
by Lemma~\ref{lemma:birational-to-SB}(i),(ii),
and $G$ is embedded as a subgroup into~\mbox{$\Bir(S_0)$}. 

Let $G'$ denote the subgroup of $G$ generated by 
$\sigma_3$, and let $G''$ denote the subgroup generated by 
$\sigma_2$ and $\sigma_3$, so that $G'\cong\mumu_3$ and $G''\cong\mumu_3^2$.
Since the element $\sigma_3$ 
has type~$\mathrm{A}_2^3$, one has 
$$
\rkPic(S)^{G'}=1.
$$
According to Lemma~\ref{lemma:BR}, this implies there is no 
$G'$-equivariant birational map from $S$ to $S_0$.  
Therefore, $G'$ (and thus also $G''$ and $G$) is not conjugate in 
$\Bir(S_0)$ to a subgroup of $\Aut(S_0)$.
This completes the proof of Theorem~\ref{theorem:main}.

\section{Plane cubics}
\label{section:plane}

In this section we discuss the action of the automorphism group of a 
plane cubic curve without 
points over the base field on the inflection points of the curve. 

Recall that every smooth plane cubic curve over a field $\KK$ 
of characteristic zero has exactly 
$9$ inflection points over $\bar{\KK}$. The set $\Sigma\subset\PP^2$ of these 
points is defined over $\KK$. On the other 
hand, the set~$\Sigma_{\bar{\KK}}$ together with the lines passing through 
its points forms a configuration~\mbox{$(9_4,12_3)$} 
isomorphic to the configuration of points and lines 
on the affine plane $\mathbb{A}^2$ over the field~$\mathbf{F}_3$
of three elements.
Thus, the automorphism group of the configuration $\Sigma_{\bar{\KK}}$ 
is isomorphic to the group 
$$
\GAff\cong \mumu_3^2\rtimes\GL_2(\mathbf{F}_3)
$$
of order $432=16\cdot 27$; 
we refer the reader to~\cite[\S7.3]{BK} for more details.

\begin{lemma}\label{lemma:GAff}
Let $H\subset\GAff$ be a subgroup acting on 
$\mathbb{A}^2(\mathbf{F}_3)$ without fixed points.
Then $H$ contains an element of order $3$.
\end{lemma}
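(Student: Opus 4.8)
The plan is to prove the contrapositive: if $H\subset\GAff$ contains no element of order $3$, then $H$ has a fixed point on $\mathbb{A}^2(\mathbf{F}_3)$. First I would record that, since $|\GAff|=432=16\cdot 27=2^4\cdot 3^3$, any subgroup whose order is not divisible by $3$ has order dividing $16$. By Cauchy's theorem the hypothesis that $H$ has no element of order $3$ is equivalent to $3\nmid|H|$, so $H$ is a $2$-group.

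Next I would use that $\GAff$ acts faithfully on the $9$-element set $\mathbb{A}^2(\mathbf{F}_3)$, so $H$ acts on this set with every orbit of size a power of $2$. Hence every non-trivial orbit has even size, and the number of fixed points of $H$ is congruent to $9\equiv 1\pmod 2$; in particular it is positive. Thus $H$ fixes a point, contradicting the assumption, and the lemma follows.

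There is essentially no hard step here: the only things to keep track of are the factorization $432=16\cdot 27$ (so that ``order prime to $3$'' forces ``$2$-group'') and the parity of $9$ (so that a $2$-group acting on $9$ points must fix at least one of them). If one prefers to avoid Cauchy's theorem, one can argue symmetrically: a subgroup of $\GAff$ acting on the $9$ points without a fixed point cannot be a $2$-group by the same parity count, so its order is divisible by $3$, and a non-trivial Sylow $3$-subgroup then supplies an element of order $3$; invoking Cauchy is simply the most economical formulation.
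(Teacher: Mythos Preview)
Your argument is correct and is essentially identical to the paper's: both observe that $|\GAff|=2^4\cdot 3^3$ so a subgroup with no element of order $3$ is a $2$-group, and then use that a $2$-group acting on a $9$-element set must have a fixed point by the parity of orbit sizes. The only cosmetic differences are that you phrase it as a contrapositive and make the appeal to Cauchy's theorem explicit, whereas the paper leaves that step implicit; note also that faithfulness of the action is not actually needed, since orbit sizes divide $|H|$ regardless.
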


\begin{proof}
Suppose that $H$ has no elements of order $3$. 
Then the order of $H$ is a power of~$2$.
Since $H$ has no fixed points on $\mathbb{A}^2(\mathbf{F}_3)$,
we conclude that $\mathbb{A}^2(\mathbf{F}_3)$ is a disjoint union of 
$H$-orbits of even order. This is impossible 
because~\mbox{$|\mathbb{A}^2(\mathbf{F}_3)|=9$}.
\end{proof}

\begin{corollary}\label{corollary:GAff}
Let $C$ be a smooth plane cubic curve over a field 
$\KK$ of characteristic zero, and let $\Sigma\subset\PP^2$
be the set of its inflection points. Suppose that 
$\Sigma(\KK)=\varnothing$.
Then the image of the Galois group $\Gal\left(\bar{\KK}/\KK\right)$ 
in the group 
$\GAff\cong\Aut(\Sigma_{\bar{\KK}})$ contains an element of order~$3$.
\end{corollary}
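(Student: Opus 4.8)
The plan is to deduce Corollary~\ref{corollary:GAff} from Lemma~\ref{lemma:GAff} by a standard Galois-theoretic translation. Let $\Gamma\subset\GAff$ denote the image of $\Gal\left(\bar{\KK}/\KK\right)$ under the action on the inflection configuration~$\Sigma_{\bar{\KK}}$. The nine inflection points of $C_{\bar{\KK}}$ are permuted by $\Gal\left(\bar{\KK}/\KK\right)$, and the orbits of this action correspond precisely to the closed points of the $\KK$-scheme $\Sigma$; in particular, a fixed point of $\Gamma$ on $\Sigma_{\bar{\KK}}\cong\mathbb{A}^2(\mathbf{F}_3)$ is the same thing as a $\Gal\left(\bar{\KK}/\KK\right)$-invariant inflection point, which is the same thing as a $\KK$-point of $\Sigma$.

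First I would observe that the hypothesis $\Sigma(\KK)=\varnothing$ says exactly that $\Gamma$ has no fixed point on $\Sigma_{\bar{\KK}}$, once we identify $\Sigma_{\bar{\KK}}$ with $\mathbb{A}^2(\mathbf{F}_3)$ via the isomorphism of configurations recalled before Lemma~\ref{lemma:GAff} (this isomorphism is canonical up to the action of $\GAff$, so it intertwines the $\Gamma$-action with a genuine action of $\Gamma$ on $\mathbb{A}^2(\mathbf{F}_3)$ by configuration automorphisms). Then I would apply Lemma~\ref{lemma:GAff} with $H=\Gamma$: since $\Gamma$ acts on $\mathbb{A}^2(\mathbf{F}_3)$ without fixed points, it contains an element of order $3$. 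That element, viewed back inside $\GAff\cong\Aut(\Sigma_{\bar{\KK}})$, is the desired element of order $3$ in the image of the Galois group, which completes the proof.

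The only genuinely substantive point — and the one I would state carefully rather than wave at — is the identification of $\KK$-points of $\Sigma$ with $\Gamma$-fixed points of $\Sigma_{\bar{\KK}}$. Since $\Sigma$ is a finite reduced $\KK$-scheme (it is defined over $\KK$ as the Hessian locus, and is étale of degree $9$ in characteristic zero), its $\bar{\KK}$-points form a single $\Gal\left(\bar{\KK}/\KK\right)$-set whose orbits are the closed points, and a closed point has residue field $\KK$ exactly when the corresponding orbit is a singleton. Thus $\Sigma(\KK)=\varnothing$ is equivalent to the absence of $\Gal\left(\bar{\KK}/\KK\right)$-fixed points on $\Sigma_{\bar{\KK}}$, hence to the absence of $\Gamma$-fixed points since the $\Gamma$-action is the action of $\Gal\left(\bar{\KK}/\KK\right)$ through its image. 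I do not anticipate any real obstacle here; the argument is essentially a dictionary lookup between the arithmetic and combinatorial pictures, and the combinatorial input has already been isolated in Lemma~\ref{lemma:GAff}.
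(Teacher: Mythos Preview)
Your argument is correct and is exactly the intended one: the paper states Corollary~\ref{corollary:GAff} without proof precisely because it is the immediate translation of Lemma~\ref{lemma:GAff} via the identification of $\KK$-rational inflection points with Galois-fixed points of $\Sigma_{\bar{\KK}}\cong\mathbb{A}^2(\mathbf{F}_3)$. Your careful justification of that identification is fine and matches the implicit reasoning.
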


\section{Automorphism groups}
\label{section:Jordan}

In this section we 
prove the first assertion of Theorem~\ref{theorem:cubic}.

\begin{lemma}
\label{lemma:order-3}
Let $S$ be a smooth cubic surface over a field $\KK$ of characteristic zero 
with~\mbox{$S(\KK)=\varnothing$}.
Then the order of the group $\Aut(S)$ is odd.
\end{lemma}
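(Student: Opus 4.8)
The plan is to argue by contradiction: suppose $\Aut(S)$ has even order, so by Cauchy's theorem it contains an element $\tau$ of order $2$. I want to derive the existence of a $\KK$-point on $S$, contradicting the hypothesis $S(\KK)=\varnothing$. The natural tool is the action of $\tau$ on the $27$ lines, i.e. its image in $\WE$; involutions in $\WE$ are classified by Carter's notation, and each type has a well-understood geometry. The key point I expect to use is that every involution of a cubic surface fixes a nonzero-dimensional part of $S_{\bar\KK}$, and in fact the fixed locus contains (or the non-fixed geometry produces) a curve or a configuration of lines that is defined over $\KK$ and carries a $\KK$-point.

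Concretely, I would run through the conjugacy classes of involutions in $\WE$ (there are a handful: in Carter's notation roughly $\mathrm{A}_1$, $\mathrm{A}_1^2$, $\mathrm{A}_1^3$, $\mathrm{A}_1^4$, and the class sometimes denoted $\mathrm{D}_4(a_1)$ or $4\mathrm{A}_1$). For each, the fixed-point set $\Fix(\tau)\subset S_{\bar\KK}$ is known classically: for the reflection-type involution (the Geiser-like involution associated to a line, i.e. the "switch" coming from projection) the fixed locus is a line plus a conic, or an elliptic curve, etc. Since $\tau$ is defined over $\KK$, its scheme-theoretic fixed locus $S^{\tau}$ is defined over $\KK$. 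If $S^{\tau}$ contains a line $L$ defined over $\KK$, then $L\cong\PP^1_{\KK}$ has a $\KK$-point, contradiction. If $S^{\tau}$ is (or contains) a single curve of genus $1$ or a conic defined over $\KK$, I would instead use that $S$ has a point of degree $3$ over $\KK$ together with the geometry: for instance a $\tau$-invariant conic meeting the degree-$3$ orbit, or the hyperplane section cut out by the invariant $\pm1$-eigenspaces of $\tau$ acting on $H^0(\OOO_S(-K_S))^*\cong\KK^4$, which splits $\PP^3$ into two linear subspaces defined over $\KK$; intersecting $S$ with the appropriate one produces a plane cubic or a point of small degree defined over $\KK$, and Lemma~\ref{lemma:points-deg-2} (no points of degree $1$ or $2$) is then used to force dimension and ultimately a $\KK$-point. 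This is the standard "an involution acts on $\PP^3$ by $\pm1$ eigenspaces, so it fixes two complementary linear subspaces, both defined over $\KK$" trick.

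The main obstacle I anticipate is the bookkeeping: handling each involution type uniformly. The cleanest route is probably to avoid listing Carter types and instead argue directly on $\PP^3$. Any $\tau\in\Aut(S)\subset\PGL_4(\KK)$ of order $2$ lifts (after adjusting by a scalar, which is harmless since $\mu_2\subset\KK$) to a diagonalizable matrix with eigenvalues in $\{\pm1\}$, hence decomposes $\PP^3_{\KK}=\PP(V_+)\sqcup\PP(V_-)$ into two disjoint linear subspaces, each defined over $\KK$, whose dimensions add to $2$. One of them, say $\PP(V_+)$, is then a point, a line, or a plane over $\KK$; a point or line in it immediately gives a $\KK$-point, and if $\PP(V_+)$ is a plane then $\PP(V_-)$ is a $\KK$-point — but either that point lies on $S$ (contradiction), or the line through it and a point of the degree-$3$ orbit argument from Lemma~\ref{lemma:points-deg-2} applies. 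The only genuinely delicate case is dimensions $(1,0)$: $\PP(V_-)$ a $\KK$-point not on $S$ and $\PP(V_+)$ a $\KK$-line; here the $\KK$-line is a $\KK$-point unless it lies on $S$, in which case $S$ again has a $\KK$-point — so in fact every case collapses quickly. I would therefore expect the whole proof to be short, with the "hard part" being merely to phrase the eigenspace decomposition carefully enough that the edge cases (when a fixed linear subspace happens to lie on $S$, versus meeting it in a lower-dimensional scheme) are all visibly handled, invoking Lemma~\ref{lemma:points-deg-2} exactly where a $0$- or $1$-dimensional intersection of the "wrong" degree would otherwise threaten to survive.
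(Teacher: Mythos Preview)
Your strategy is the same as the paper's: assume an involution $\tau$ exists and extract a $\KK$-point from its fixed locus. The paper does not carry out the eigenspace analysis itself but simply cites \cite[Table~2]{Trepalin-cubic}: over $\bar\KK$ an involution of a smooth cubic has fixed locus either a smooth elliptic curve together with a single isolated point (type~1), or a line together with three isolated points (type~2). The unique isolated point in type~1 is then Galois-invariant, hence a $\KK$-point of $S$; the line in type~2 is defined over $\KK$ and, being a line in $\PP^3$, carries a $\KK$-point of $S$. That is the entire argument.

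Your eigenspace route is the natural way to reprove Trepalin's dichotomy by hand, but the outline has a real gap at the key step. The decomposition $\PP^3_\KK=\PP(V_+)\sqcup\PP(V_-)$ produces $\KK$-rational linear subspaces of $\PP^3$, not of $S$; you repeatedly slide from ``$\PP(V_\pm)$ has a $\KK$-point'' to ``$S$ has a $\KK$-point'' without justification. What is actually needed is that in the plane--point case the point $\PP(V_-)$ lies on $S$, and in the line--line case one of the two fixed lines is contained in $S$. Both hold, for the same reason: the defining cubic $F$ is an eigenvector for the lifted involution acting on $\mathrm{Sym}^3 V^*$, and if $F$ lay in the ``odd'' eigenspace it would be divisible by a linear form, contradicting irreducibility of $S$; lying in the ``even'' eigenspace forces $F$ to vanish at the isolated point, respectively along one of the two lines. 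Your proposed fallback via Lemma~\ref{lemma:points-deg-2} and a degree-$3$ orbit does not rescue the argument---a $\KK$-point of $\PP^3\setminus S$, or a $\KK$-line meeting $S$ only in an irreducible degree-$3$ zero-cycle, yields nothing---so this computation cannot be skipped. (Two smaller issues: your ``dimensions $(1,0)$'' case does not exist, since the projective dimensions of the two eigenspaces must sum to $2$, leaving only the plane--point and line--line cases; and the claim that $\tau$ lifts to an honest involution in $\GL_4(\KK)$ needs a word of justification, since ``$\mumu_2\subset\KK$'' is not the reason---rather, $F$ being an eigenvector with eigenvalue in $\KK$ forces the scalar $A^2$ to be a square in $\KK$.)
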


\begin{proof}
The argument is identical to the one used in the proof 
of \cite[Lemma~3.3]{Shramov-SB}.
Suppose that the order of $\Aut(S)$ is even. Then there exists an automorphism $g$ of $S$
of order $2$. The action of $g$ on $S_{\bar{\KK}}$ can be of one of the two types listed in
\cite[Table~2]{Trepalin-cubic}; in the notation of \cite[Table~2]{Trepalin-cubic} these are
types~1 and~2. If $g$ is of type $1$, then the fixed point locus
$\Fix_{S_{\bar{\KK}}}(g)$ of $g$ on $S_{\bar{\KK}}$
consists of a smooth elliptic curve and one isolated point;
if $g$ is of type $2$, then $\Fix_{S_{\bar{\KK}}}(g)$ consists of a $(-1)$-curve and three isolated points. Note that a $\Gal\left(\bar{\KK}/\KK\right)$-invariant
$(-1)$-curve on $S$ always contains a $\KK$-point, since such a curve is a line
in the anticanonical embedding of $S$. Therefore, in each of these two cases we
find a $\Gal\left(\bar{\KK}/\KK\right)$-invariant point on $S_{\bar{\KK}}$, which gives a contradiction.
\end{proof}

Knowing that a group acting on a cubic surface has odd order 
provides strong restrictions on its structure. 
In particular, Lemma~\ref{lemma:order-3} implies the following.

\begin{corollary}\label{corollary:S5}
Let $S$ be a smooth cubic surface 
over a field $\KK$ of characteristic zero 
with~\mbox{$S(\KK)=\varnothing$}.
Suppose that $S_{\bar{\KK}}$ is of one of the types~$\mathrm{II}$,
$\mathrm{V}$, $\mathrm{VI}$, or~$\mathrm{VIII}$ in the notation of
\cite[Table~9.6]{Dolgachev}.
Then the group $\Aut(S)$ is cyclic.
\end{corollary}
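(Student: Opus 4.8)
The plan is to combine the odd-order constraint of Lemma~\ref{lemma:order-3} with the explicit structure of the automorphism groups listed in \cite[Table~9.6]{Dolgachev}, and then deduce that the surviving subgroup must be cyclic. First I would recall that $\Aut(S)$ embeds into $\Aut(S_{\bar\KK})$, since an automorphism of $S$ over $\KK$ induces one of $S_{\bar\KK}$; hence $\Aut(S)$ is a subgroup of the group $\Aut(S_{\bar\KK})$ attached to the relevant type. For the four types singled out — $\mathrm{II}$, $\mathrm{V}$, $\mathrm{VI}$, $\mathrm{VIII}$ — the groups $\Aut(S_{\bar\KK})$ in \cite[Table~9.6]{Dolgachev} are $\SSS_5$, $\SSS_4$, $\mumu_8$, and $\mumu_3\rtimes\mumu_4$ (the precise identifications should be quoted verbatim from that table). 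So the task reduces to: classify the odd-order subgroups of each of these four groups and check they are all cyclic.

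The key steps, in order: (1) By Lemma~\ref{lemma:order-3}, $\Aut(S)$ has odd order, so it lies in a Sylow subgroup of odd order of $\Aut(S_{\bar\KK})$, or more precisely is a subgroup of odd order. (2) For type $\mathrm{II}$ with $\Aut(S_{\bar\KK})\cong\SSS_5$: an odd-order subgroup of $\SSS_5$ has order dividing $15$ and is contained in a subgroup of order $15$; since every group of order $15$ is cyclic (and subgroups of $\mumu_3$, $\mumu_5$, $\mumu_{15}$ are cyclic), any odd-order subgroup of $\SSS_5$ is cyclic. (3) For type $\mathrm{V}$ with $\Aut(S_{\bar\KK})\cong\SSS_4$: an odd-order subgroup has order dividing $3$, hence is trivial or $\mumu_3$, in particular cyclic. (4) For type $\mathrm{VI}$ with $\Aut(S_{\bar\KK})\cong\mumu_8$: every subgroup of a cyclic group is cyclic, so there is nothing to prove. (5) For type $\mathrm{VIII}$ with $\Aut(S_{\bar\KK})\cong\mumu_3\rtimes\mumu_4$: the $\mumu_4$ acts on $\mumu_3$ through its quotient $\mumu_2$, so an odd-order subgroup cannot meet the $\mumu_4$-part nontrivially and must be contained in the normal $\mumu_3$; hence it is cyclic. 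Assembling these four cases proves the corollary.

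The only genuine subtlety — and the step I would be most careful about — is step (1)'s use of Lemma~\ref{lemma:order-3} together with the claim that the type of $S_{\bar\KK}$ is preserved under passing from $S$ to $S_{\bar\KK}$: one must make sure that ``$S_{\bar\KK}$ is of type $T$'' really does pin down $\Aut(S_{\bar\KK})$ as an abstract group from \cite[Table~9.6]{Dolgachev}, and that the embedding $\Aut(S)\hookrightarrow\Aut(S_{\bar\KK})$ is the obvious one. Once that is granted, the rest is the elementary group theory above: every odd-order subgroup of $\SSS_5$, $\SSS_4$, $\mumu_8$, and $\mumu_3\rtimes\mumu_4$ is cyclic. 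There is no real obstacle beyond bookkeeping the four cases against the table.
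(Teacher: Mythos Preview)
Your strategy matches the paper's: invoke Lemma~\ref{lemma:order-3} to force $|\Aut(S)|$ odd, then check that every odd-order subgroup of the relevant $\Aut(S_{\bar\KK})$ is cyclic. The paper is more economical: it observes that for \emph{all four} types~$\mathrm{II}$, $\mathrm{V}$, $\mathrm{VI}$, $\mathrm{VIII}$ one has $\Aut(S_{\bar\KK})\subset\SSS_5$, and then notes that an odd-order subgroup of $\SSS_5$ has order $1$, $3$, or $5$ (since $\SSS_5$ contains no subgroup of order~$15$), hence is cyclic. Your case-by-case treatment reaches the same end but is longer.

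Two concrete errors to fix. First, your identifications for types~$\mathrm{VI}$ and~$\mathrm{VIII}$ are wrong: in \cite[Table~9.6]{Dolgachev} these are $\SSS_3\times\mumu_2$ and $\SSS_3$, not $\mumu_8$ and $\mumu_3\rtimes\mumu_4$. (You seem to have confused~$\mathrm{VI}$ with~$\mathrm{VII}$.) With the correct groups the containment in $\SSS_5$ is obvious and the paper's uniform argument applies. Second, in your step~(2) you write that an odd-order subgroup of $\SSS_5$ ``is contained in a subgroup of order~$15$''; this is false, as $\SSS_5$ has no element of order~$15$ and hence no subgroup of order~$15$. The correct statement is simply that an odd-order subgroup has order dividing $15$, and order $15$ itself is excluded, leaving $1$, $3$, or $5$ --- exactly as the paper argues.
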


\begin{proof}
According to \cite[Table~9.6]{Dolgachev}, one has 
$$
\Aut(S_{\bar{\KK}})\subset\SSS_5.
$$
On the other hand, the order of $\Aut(S)$ is odd 
by Lemma~\ref{lemma:order-3}, and thus $\Aut(S)$ is a cyclic 
group of order either $3$ or $5$ (note that $\SSS_5$ does not contain 
groups of order~$15$). 
\end{proof}

Let $\HHH_3$ denote the Heisenberg group of order $27$; this is the only non-abelian
group of order $27$ and exponent $3$. Its center $\z(\HHH_3)$ is isomorphic to $\mumu_3$,
and there is a non-split exact sequence
$$
1\to\z(\HHH_3)\to\HHH_3\to\mumu_3^2\to 1.
$$
On the other hand, there is an isomorphism 
$$
\HHH_3\cong\mumu_3^2\rtimes\mumu_3.
$$ 

\begin{lemma}\label{lemma:H3}
Let $S$ be a smooth cubic surface
over a field $\KK$ of characteristic zero 
with~\mbox{$S(\KK)=\varnothing$}.
Suppose that $S_{\bar{\KK}}$ is of one of the 
types~$\mathrm{I}$, $\mathrm{III}$, or~$\mathrm{IV}$  
in the notation of~\mbox{\cite[Table~9.6]{Dolgachev}}. 
Then the group $\Aut(S)$ is abelian.
\end{lemma}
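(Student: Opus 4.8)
The plan is to argue by contradiction: assuming $\Aut(S)$ is non-abelian, I will exhibit a $\KK$-point on $S$. By Lemma~\ref{lemma:order-3} the order of $\Aut(S)$ is odd, and for $S_{\bar{\KK}}$ of type $\mathrm{I}$, $\mathrm{III}$ or $\mathrm{IV}$ the group $\Aut(S_{\bar{\KK}})$ is $\mumu_3^3\rtimes\SSS_4$, $\HHH_3\rtimes\mumu_4$, or $\HHH_3\rtimes\mumu_2$ respectively, by \cite[Table~9.6]{Dolgachev}; in each case an odd-order subgroup is a $3$-group contained in a Sylow $3$-subgroup. A non-abelian finite $3$-group has non-trivial centre, so I fix an order-$3$ element $z\in\z(\Aut(S))$. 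A short case analysis identifies $\Aut(S)$: for types $\mathrm{III}$ and $\mathrm{IV}$ one must have $\Aut(S)\cong\HHH_3$, as $\HHH_3$ has no proper non-abelian subgroup; for type $\mathrm{I}$ the group $\Aut(S)$ is $\HHH_3$, $\mumu_9\rtimes\mumu_3$, or the Sylow $3$-subgroup of order $81$. In every case the centre of $\Aut(S)$ turns out to consist of automorphisms of type $\mathrm{A}_2^3$ in the notation of \cite{Carter}; equivalently $z$ fixes a plane $\Pi$ and a point $p\notin\Pi$ in the anticanonical embedding $S\hookrightarrow\PP^3$, and $S_{\bar{\KK}}/\langle z\rangle\cong\PP^2$.

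The next step extracts arithmetic from the hypothesis that $z$ is defined over $\KK$. Acting on $H^0(S,-K_S)$, the order-$3$ element $z$ of type $\mathrm{A}_2^3$ has exactly two eigenvalues, namely $1$ and a primitive cube root of unity $\omega$, occurring with multiplicities $3$ and $1$ in one order or the other. Since $z$ is $\Gal(\bar{\KK}/\KK)$-invariant, the eigenvalue of multiplicity $3$ is Galois-invariant, hence lies in $\KK$, and then so does the other one; either way $\omega\in\KK$. Consequently the plane $\Pi$ and the point $p$ are defined over $\KK$; moreover, writing out the equation of $S$ in $\KK$-coordinates adapted to the eigenspace decomposition of $z$ and using that $S$ is smooth (so that $S$ is neither a cone nor reducible), one finds $S=\{x_3^3=f(x_0,x_1,x_2)\}$ for a cubic form $f$ over $\KK$ with $\Pi=\{x_3=0\}$. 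In particular $p=(0:0:0:1)\notin S$, and $C:=S\cap\Pi=\Fix_S(z)=\{f=0\}$ is a plane cubic curve over $\KK$.

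To conclude I use the remaining non-abelianness of $\Aut(S)$. As $z$ is central, every element of $\Aut(S)$ commutes with $z$, hence preserves $\Pi$ and $C$; since $z$ itself acts trivially on $\Pi$, the induced action on $\Pi\cong\PP^2$ factors through a subgroup $\bar G\subset\PGL_3(\KK)$ which, in each of the cases above, contains a copy of $\mumu_3^2$ (the image of $\HHH_3$, of $\mumu_9\rtimes\mumu_3$, or of the order-$81$ group, modulo the kernel of the action). I claim $f$ is actually $\bar G$-invariant, not merely semi-invariant: the character by which $\Aut(S)$ acts on the equation of $S$ is trivial because the coefficient of $x_3^3$ — which is non-zero precisely because $p\notin S$ — is fixed by $\Aut(S)$, $\Aut(S)$ being a $3$-group whose abelianization has exponent $3$. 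Hence $f$ is a $\mumu_3^2$-invariant plane cubic form. Depending on which $\mumu_3^2\subset\PGL_3$ occurs, the space of such forms is either the Hesse pencil $\langle x_0^3+x_1^3+x_2^3,\,x_0x_1x_2\rangle$ or the line spanned by $x_0^3+x_1^3+x_2^3$; in both cases $C$ contains a $\KK$-point, since the base points of the Hesse pencil have coordinates in $\{0,\pm1,\pm\omega,\pm\omega^2\}\subset\KK$ and $(1:-1:0)$ lies on $\{x_0^3+x_1^3+x_2^3=0\}$. Thus $C(\KK)\subset S(\KK)$ is non-empty, contradicting the hypothesis.

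I expect the final step to be the main obstacle: a priori $C$ could be a diagonal plane cubic with no $\KK$-point, and it is exactly the $\mumu_3^2$-symmetry inherited from $\Aut(S)$ — together with the inclusion $\omega\in\KK$ squeezed out of the type-$\mathrm{A}_2^3$ hypothesis — that forces $C$ into the Hesse pencil or onto the Fermat cubic and thereby supplies the rational point. The case analysis for type $\mathrm{I}$, where $\Aut(S)$ need not be the Heisenberg group, is a secondary nuisance, handled by the observation that the relevant central element is still of type $\mathrm{A}_2^3$ and that the same endgame applies.
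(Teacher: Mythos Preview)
Your approach is genuinely different from the paper's and, once a gap in the final step is filled, it works. The paper never extracts $\omega\in\KK$ or writes $C$ explicitly; instead it argues via the Galois action: since $C=S\cap\Pi$ has no $\KK$-point, Corollary~\ref{corollary:GAff} produces an order-$3$ element in the Galois image in $\GAff$, hence an order-$3$ element $\gamma$ in the Galois image in $\WE$ lying outside the image of $\Xi=\langle z\rangle$; then $\Aut(S)$ must commute with $\gamma$, and the centralizer of any non-central element in the $3$-Sylow $\mumu_3^3\rtimes\mumu_3$ of $\WE$ is abelian. Your route---deducing $\omega\in\KK$ from the $(3,1)$ eigenvalue pattern of $z$ and then exhibiting a $\KK$-point on $C$ directly---is more hands-on and avoids the Weyl-group combinatorics.

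The gap is precisely where you flag the ``main obstacle''. There are \emph{two} conjugacy classes of $\mumu_3^2$ in $\PGL_3$: the Heisenberg one (no fixed point on $\PP^2$; lifts to $\HHH_3\subset\GL_3$; invariant cubics form the Hesse pencil) and the diagonal, toral one (three fixed points; lifts to an \emph{abelian} subgroup of $\GL_3$; invariant cubics span the full $3$-dimensional space $\langle x_0^3,x_1^3,x_2^3\rangle$). A general diagonal cubic $\alpha x_0^3+\beta x_1^3+\gamma x_2^3$ has no $\KK$-point even when $\omega\in\KK$---this is exactly the curve $C$ in the paper's Example following this lemma---so your dichotomy ``Hesse pencil or Fermat line'' is incomplete as stated, and you must rule out the diagonal case.

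The fix uses the non-abelianness you have already set up. Choose $a,b\in\Aut(S)$ with $[a,b]=z^{\pm1}$ (this is possible in each of your cases: for $\HHH_3$ and for $\mumu_9\rtimes\mumu_3$ the commutator subgroup is exactly $\langle z\rangle$, and $\Theta$ contains a copy of $\HHH_3$ with centre $\langle z\rangle$). Lift to $\tilde a=\mathrm{diag}(A,1),\ \tilde b=\mathrm{diag}(B,1)\in\GL_4(\KK)$ by normalizing the $x_3$-entry; then $[\tilde a,\tilde b]=\mathrm{diag}([A,B],1)$ represents $z^{\pm1}=\mathrm{diag}(1,1,1,\omega^{\pm1})$ in $\PGL_4$, which forces $[A,B]=\omega^{\mp1}I\neq I$. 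Thus $\langle A,B\rangle\subset\GL_3(\KK)$ is non-abelian extraspecial, not diagonal. If both $A,B$ have order $3$ one diagonalizes $A$ over $\KK$ (its eigenvalues $1,\omega,\omega^2$ lie in $\KK$), and $B$ becomes the cyclic shift after rescaling the basis; $f$ lands in the Hesse pencil and the nine base points are $\KK$-rational. If $\Aut(S)\cong\mumu_9\rtimes\mumu_3$ then one of $A,B$ has order $9$ with cube $\omega^{\pm1}I$; diagonalize the order-$3$ generator instead, and the relations force $c_3=0$ and $c_i/c_j\in(\KK^*)^3$, so $f$ is $\KK$-equivalent to the Fermat cubic. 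Either way $C(\KK)\neq\varnothing$, and your contradiction goes through.
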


\begin{proof}
The order of $\Aut(S)$ is odd
by Lemma~\ref{lemma:order-3}.
Therefore, according to \cite[Table~9.6]{Dolgachev},
the group $\Aut(S)$ is a subgroup 
of the $3$-Sylow subgroup $\Theta\subset\Aut(S_{\bar{\KK}})$; 
one has~\mbox{$\Theta\cong\mumu_3^3\rtimes\mumu_3$} 
if $S_{\bar{\KK}}$ is of type~I,
and $\Theta\cong\HHH_3$ if $S_{\bar{\KK}}$ is of type~III or~IV.
In either case the central subgroup $\Xi$ of $\Theta$ is isomorphic to 
$\mumu_3$. 
Note that any subgroup of $\Theta$ that does not contain $\Xi$ 
is abelian. 
Thus we may assume that $\Aut(S)$ contains $\Xi$.
Then~\mbox{$S/\Xi\cong\PP^2$}, and the quotient map 
$$
\nu\colon S\to \PP^2
$$
is defined over $\KK$. The branch curve of $\nu$ is a smooth 
plane cubic~$C$. Since~$S$ has no $\KK$-points, the curve 
$C$ also has no $\KK$-points. Therefore, 
the image of the Galois group~\mbox{$\Gal\left(\bar{\KK}/\KK\right)$} 
in the automorphism group
$\GAff$ of the configuration of the inflection points of $C_{\bar{\KK}}$
contains an element of order~$3$ by Corollary~\ref{corollary:GAff}. 
Since the map $\nu$ is $\Gal\left(\bar{\KK}/\KK\right)$-equivariant, we see that 
the image of $\Gal\left(\bar{\KK}/\KK\right)$ in the Weyl group $\WE$ 
contains an element $\gamma$ of order $3$ 
such that $\gamma$ is not contained in the image of 
the group~$\Xi$ in~$\WE$. On the other hand, we know that 
the group $\Aut(S)$ must commute with $\gamma$. 
Recall that the $3$-Sylow subgroup $\Delta$ of 
$\WE$ is isomorphic to $\mumu_3^3\rtimes\mumu_3$. 
The center of~$\Delta$ is isomorphic to $\mumu_3$, and according to our 
assumptions it coincides with the image in the Weyl group~$\WE$ 
of the subgroup~\mbox{$\Xi\subset\Aut(S)$}.
Therefore, $\Aut(S)$ is isomorphic to a subgroup of $\Delta$ commuting with  
a non-central element~\mbox{$\gamma\in\Delta$}.
It is straightforward to check that  
any such subgroup of $\Delta$ is abelian. 
\end{proof}

The following example shows that in the notation of the proof 
of Lemma~\ref{lemma:H3} the subgroup in $\Aut(\PP^2)$ preserving 
the plane cubic $C$ may be non-abelian, although the group~\mbox{$\Aut(S)$} 
is abelian. 

\begin{example}
Let $\Bbbk$ be an algebraically closed field of characteristic zero, 
and let~\mbox{$\KK=\Bbbk(\lambda, \mu)$}, 
where $\lambda$ and $\mu$ are transcendental variables.
Consider the cubic surface~$S$ over $\KK$ given 
by the equation
$$
\mu t^3=x^3+\lambda y^3+\lambda^2 z^3;
$$
in the notation of \cite[Table~9.6]{Dolgachev},
the surface $S_{\bar{\KK}}$ has type~I.
It is easy to see that~$S$ has no $\KK$-points. 
Let $\omega$ be a non-trivial cubic root of unity. Let 
$\sigma_1$, $\sigma_2$, and~$\sigma_3$ be the automorphisms of $S$ 
defined as
\begin{equation*}
\begin{aligned}
&\sigma_1\colon (x:y:z:t)\mapsto (\omega x:y:z:t),\\
&\sigma_2\colon (x:y:z:t)\mapsto (x:\omega y:z:t),\\
&\sigma_3\colon (x:y:z:t)\mapsto (x:y:z:\omega t).
\end{aligned}
\end{equation*}
Let $\nu\colon S\to\PP^2$ 
be the quotient by the group generated by $\sigma_3$. 
Denote by $C$ the branch curve of $\nu$.
Then $C$ is a smooth plane cubic
given in $\PP^2$ with homogeneous coordinates~$x$,~$y$, and~$z$
by the equation
$$
x^3+\lambda y^3+\lambda^2 z^3=0.
$$
Let $\omega$ be a non-trivial cubic root of unity. 
The group $G\cong \mumu_3^3$ generated by $\sigma_1$,
$\sigma_2$, and~$\sigma_3$ acts on $S$, and its quotient 
$\hat{G}\cong\mumu_3^2$ generated by 
the linear transformations
$$
\begin{aligned}
&\hat{\sigma}_1\colon (x:y:z)\mapsto (\omega x:y:z),\\
&\hat{\sigma}_2\colon (x:y:z)\mapsto (x:\omega y:z),
\end{aligned}
$$
acts on $\PP^2$ so that the map $\nu$ is $G$-equivariant.
The group $\hat{G}$ preserves the plane curve $C$, but~$C$ actually 
has more symmetries.
Namely, it is also preserved by the 
linear transformation
$$
\hat{\sigma}\colon (x:y:z)\mapsto (\lambda z:x:y).
$$
Note that $\hat{\sigma}$ lifts to the automorphism 
$$
\sigma\colon (x:y:z:t)\mapsto \left(\lambda z:x:y:\sqrt[3]{\lambda}t\right)
$$
of the surface $S_{\bar{\KK}}$.  
However, we can see from Lemma~\ref{lemma:H3} 
(or check in a straightforward way) that $\hat{\sigma}$ cannot be lifted 
to an automorphism of~$S$. 
\end{example}

Now we are ready to prove assertion~(i) of  
Theorem~\ref{theorem:cubic}.

\begin{lemma}
\label{lemma:cubic-Aut}
Let $\KK$ be a field of characteristic zero, and let $S$ be a smooth cubic
surface over $\KK$. Suppose that $S$ has no $\KK$-points.
Then the group $\Aut(S)$ is abelian.
\end{lemma}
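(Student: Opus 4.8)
The plan is to combine Lemma~\ref{lemma:order-3} with Lemma~\ref{lemma:H3} and Corollary~\ref{corollary:S5}, running through the list of automorphism groups of cubic surfaces over an algebraically closed field in~\cite[Table~9.6]{Dolgachev}. First I would observe that extension of scalars gives an embedding $\Aut(S)\hookrightarrow\Aut(S_{\bar{\KK}})$, so it is enough to show that every subgroup of $\Aut(S_{\bar{\KK}})$ of odd order is abelian; and by Lemma~\ref{lemma:order-3} the group $\Aut(S)$ itself has odd order, so this suffices.

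Next I would split into cases according to the type of $S_{\bar{\KK}}$ in~\cite[Table~9.6]{Dolgachev}. If $S_{\bar{\KK}}$ is of type~$\mathrm{I}$, $\mathrm{III}$, or~$\mathrm{IV}$, then $\Aut(S)$ is abelian directly by Lemma~\ref{lemma:H3}. If $S_{\bar{\KK}}$ is of type~$\mathrm{II}$, $\mathrm{V}$, $\mathrm{VI}$, or~$\mathrm{VIII}$, then $\Aut(S)$ is cyclic by Corollary~\ref{corollary:S5}, hence abelian. In every remaining case — the types of $S_{\bar{\KK}}$ not appearing in this list, as well as the case when $\Aut(S_{\bar{\KK}})$ is trivial — the group $\Aut(S_{\bar{\KK}})$ is a $2$-group, so its only subgroup of odd order is the trivial one, and therefore $\Aut(S)$ is trivial. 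Thus $\Aut(S)$ is abelian in all cases.

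The only point requiring attention is purely combinatorial: one has to confirm that a smooth cubic surface whose geometric automorphism group has order divisible by $3$ is necessarily of one of the types~$\mathrm{I}$, $\mathrm{II}$, $\mathrm{III}$, $\mathrm{IV}$, $\mathrm{V}$, $\mathrm{VI}$, or~$\mathrm{VIII}$, so that the remaining types contribute only $2$-groups. This is read off directly from the orders of the groups listed in~\cite[Table~9.6]{Dolgachev}, and no further geometric input is needed beyond Lemmas~\ref{lemma:order-3} and~\ref{lemma:H3} and Corollary~\ref{corollary:S5}.
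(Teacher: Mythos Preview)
Your case split and the lemmas you invoke are exactly those of the paper; the only cosmetic difference is that for types~VII, IX, X, XI the paper simply observes that $\Aut(S_{\bar{\KK}})$ is already abelian, whereas you note it is a $2$-group and conclude $\Aut(S)$ is trivial via Lemma~\ref{lemma:order-3} --- both are fine.

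One point does need correcting. Your opening reduction (``it is enough to show that every subgroup of $\Aut(S_{\bar{\KK}})$ of odd order is abelian'') is literally false for types~I, III, IV: the Heisenberg group $\HHH_3$ of order~$27$ sits inside $\Aut(S_{\bar{\KK}})$ as a non-abelian odd-order subgroup. Lemma~\ref{lemma:H3} does \emph{not} prove that every odd-order subgroup is abelian; it uses the hypothesis $S(\KK)=\varnothing$ and the resulting Galois action in an essential way to rule out $\Aut(S)$ being one of the bad subgroups. Since in your second paragraph you correctly invoke Lemma~\ref{lemma:H3} rather than attempt the false general claim, the argument itself goes through --- but the framing in your first paragraph should be dropped or rephrased, as it misrepresents what is actually being shown.
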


\begin{proof} 
Inspecting \cite[Table~9.6]{Dolgachev} which lists all possible
automorphism groups of smooth cubic surfaces over fields of characteristic
zero, we see that the group $\Aut(S_{\bar{\KK}})$ is 
abelian if $S_{\bar{\KK}}$ is of one of the types~VII, IX, X, or~XI;
the group~$\Aut(S)$ is 
cyclic by Corollary~\ref{corollary:S5}, if
$S_{\bar{\KK}}$ is of one of the types~II, V, VI, or~VIII;
and $\Aut(S)$ is abelian by Lemma~\ref{lemma:H3}, if
$S_{\bar{\KK}}$ is of one of the types~I, III, or~IV.
\end{proof}

\begin{remark}
If $S$ is a smooth cubic surface 
over a field $\KK$ of characteristic zero such that~$S_{\bar{\KK}}$ is of one 
of the types~VII, IX, X, or~XI, then $S$ 
has a $\KK$-point. Indeed, 
if $S_{\bar{\KK}}$ is of one 
of the types~VII, IX, or~XI, then $S_{\bar{\KK}}$ has a unique 
Eckardt point, see~\cite[Table~9.6]{Dolgachev};
note that there is a typo for type~VII in~\cite[Table~9.6]{Dolgachev},
cf. the relation between Eckardt points and certain involutions of cubics
described in \cite[Proposition~9.1.23]{Dolgachev}.
Thus $S$ has a $\KK$-point. 
If $S_{\bar{\KK}}$ is of type~X, then $S_{\bar{\KK}}$ has exactly two 
Eckardt points, and thus $S$ has a $\KK$-point by 
Lemma~\ref{lemma:points-deg-2}.
This means that these cases do not actually arise in the 
proof of Lemma~\ref{lemma:cubic-Aut}.
\end{remark}

\section{Del Pezzo surfaces of degree $6$}
\label{section:dP6}

In this section we make some observations on del Pezzo surfaces of degree $6$.

\begin{lemma}
\label{lemma:dP6-birational}
Let $S$ be a del Pezzo surface of degree $6$ 
over a field $\KK$ of characteristic zero
such that $S(\KK)=\varnothing$ and $\rkPic(S)=1$.
Let $S'$ be a smooth surface over~$\KK$
such that~$S'$ is either a del Pezzo
surface with $\rkPic(S')=1$, or a conic bundle with~\mbox{$\rkPic(S')=2$}.
Suppose that $S'$ is birational to $S$.
Then $S'$ is a del Pezzo surface of degree $6$ with~\mbox{$\rkPic(S')=1$}.
\end{lemma}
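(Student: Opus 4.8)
The plan is to run the equivariant Sarkisov/Minimal Model Program on the birational map $S\dashrightarrow S'$ much as in the proofs of Corollary~\ref{corollary:minimal-cubic} and Lemma~\ref{lemma:birational-to-SB}, keeping track of which del Pezzo degrees and conic bundles are permitted by the point-counting constraints already established. First I would dispose of the conic bundle case: if $S'\to C$ is a conic bundle, then by contracting $(-1)$-curves in the fibers over $C$ if necessary I may assume $\rkPic(S')=2$, and Lemma~\ref{lemma:Sarkisov-IV} (applicable since the degree is $6\neq 1,2,4,8$, once I know $S'$ has degree $6$) would say the other extremal contraction of $S'$ is not a conic bundle, hence is a birational morphism to a smooth surface; this reduces me to the del Pezzo case. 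So assume $S'$ is a del Pezzo surface of degree $d'$; by Theorem~\ref{theorem:Lang-Nishimura} and the hypothesis $S(\KK)=\varnothing$, the surface $S'$ has no $\KK$-points, so $d'\neq 5,7,9$ (a del Pezzo of degree $5$, $7$, or a trivial Severi--Brauer surface has a $\KK$-point — and a nontrivial Severi--Brauer surface, while having no $\KK$-point, has $\rkPic=1$ and is handled below). Moreover $S$ has a point of degree $2$ (the general hyperplane section through a suitably chosen subscheme, or: a del Pezzo surface of degree $6$ always has a point of degree $1,2$ or $3$; by Lang--Nishimura $S'$ inherits a point of degree $\le 3$), which forces me to rule out the degrees that Lemma~\ref{lemma:points-deg-2}-type arguments kill. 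Concretely I expect the surviving possibilities to be $d'\in\{3,4,6,8\}$ together with the Severi--Brauer case, and I must eliminate all but $d'=6$ with $\rkPic(S')=1$.

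The key steps, in order: (1) handle the conic bundle reduction as above; (2) if $d'>6$ or $d'=6$ with $\rkPic(S')>1$, perform a sequence of equivariant (here just Galois-equivariant, i.e.\ ordinary) extremal contractions over $\KK$ — using Lemma~\ref{lemma:Sarkisov-IV} to keep contracting down — to reach a del Pezzo surface $S''$ with $\rkPic(S'')=1$ of degree $d''\ge d'$, birational to $S$; (3) if instead $d'<6$, i.e.\ $d'=3$ or $d'=4$, show directly this is impossible. For step (3) with $d'=4$: a degree $4$ del Pezzo birational to $S$ has no $\KK$-points but a point of degree $3$ (by Lang--Nishimura, since $S$ has one), and the elementary argument in Lemma~\ref{lemma:dP4} (intersecting with the line or plane spanned by the conjugates of the degree-$3$ point inside $S'\subset\PP^4$) produces a $\KK$-point, a contradiction. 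For step (3) with $d'=3$: a cubic surface birational to $S$ has no $\KK$-points, so Corollary~\ref{corollary:1-2}/the remark after Lemma~\ref{lemma:dP8} forbid it from having points of degree $\le 2$; but I would need to know a cubic surface without $\KK$-points cannot be birational to a degree $6$ del Pezzo without $\KK$-points unless... — actually the cleanest route is to observe that by Lemma~\ref{lemma:birational-to-SB} a cubic without $\KK$-points and $\rkPic=1$ is $\Bir$-rigid (Theorem~\ref{theorem:minimal-cubic}), so if $S$ were birational to such a cubic $S''$ then $S\cong S''$, impossible since $S$ has degree $6$. This also pins down: after step (2) the surface $S''$ has $\rkPic(S'')=1$ and degree $d''\in\{6,8,9\}$; degree $8$ is excluded by Lemma~\ref{lemma:dP8}'s argument (a degree $8$ del Pezzo with $\rkPic=1$ is a quadric-type surface over $\KK$ and, having a point of degree $\le 3$ and no $\KK$- or degree-$2$ points, yields a contradiction exactly as in Lemma~\ref{lemma:dP8}); degree $9$ would make $S''$ a nontrivial Severi--Brauer surface, but then $S$ is birational to a Severi--Brauer surface, and I would invoke that a del Pezzo of degree $6$ birational to a Severi--Brauer surface must itself... — this is the delicate point.

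The main obstacle I anticipate is precisely ruling out that $S$ (degree $6$, $\rkPic=1$, no $\KK$-points) is birational to a nontrivial Severi--Brauer surface, equivalently excluding $d''=9$ in step (2)–(3); and dually, showing the only degree $6$ del Pezzo outcome is $\rkPic(S')=1$. For the first: a nontrivial Severi--Brauer surface $P$ has $\Bir(P)$ understood via Proposition~\ref{proposition:summary}, and every finite-index birational model of $P$ that is a del Pezzo surface is either $P$ itself (degree $9$) or a cubic surface with $\rkPic=3$ — never a degree $6$ del Pezzo with $\rkPic=1$; one should be able to extract this from the Sarkisov link classification (Theorem~\ref{theorem:minimal-cubic} / \cite{Iskovskikh96}, \cite{Shramov-SB}), since a degree $6$ del Pezzo with $\rkPic=1$ admits no Sarkisov link starting from it except to another such surface, which is exactly what Proposition~\ref{proposition:all-birational-models} will ultimately assert. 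I would therefore structure the argument so that Lemma~\ref{lemma:dP6-birational} rests only on the already-proved Lemmas~\ref{lemma:dP4}, \ref{lemma:dP5-7}, \ref{lemma:dP8}, \ref{lemma:Sarkisov-IV} and Theorem~\ref{theorem:minimal-cubic}, using Theorem~\ref{theorem:minimal-cubic}'s $\Bir$-rigidity of the cubic to close the loop: after all contractions $S''$ has $\rkPic(S'')=1$ and degree $d''\in\{6,9\}$, degree $9$ makes $S''$ a Severi--Brauer surface birational to the degree $6$ surface $S$, and the chain of contractions from $S$ down to $S''$ would have to pass through a degree $6$ stage equal (by minimality) to $S$, forcing the very first contraction $S\to(\cdot)$ to exist — contradicting $\rkPic(S)=1$. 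Hence $d''=6$, $\rkPic(S'')=1$, and since $S$ is already minimal of this type, $S'\cong S''\cong S$ has degree $6$ with $\rkPic(S')=1$.
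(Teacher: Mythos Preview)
Your approach has a genuine gap, and it is precisely the one you flag as ``the delicate point'': you never succeed in excluding the case where $S'$ is a non-trivial Severi--Brauer surface. Your proposed argument---that ``the chain of contractions from $S$ down to $S''$ would have to pass through a degree $6$ stage equal (by minimality) to $S$, forcing the very first contraction $S\to(\cdot)$ to exist, contradicting $\rkPic(S)=1$''---is not valid. There is no chain of contractions from $S$ at all, since $\rkPic(S)=1$; the birational map $S\dasharrow S'$ is not a morphism, and nothing you have written produces a non-trivial extremal contraction out of $S$. The elimination lemmas you cite (Lemmas~\ref{lemma:dP4}, \ref{lemma:dP5-7}, \ref{lemma:dP8}) are all stated for cubic surfaces without $\KK$-points, and their proofs use the existence of a degree-$3$ point coming from the cubic; you have not verified the analogous input for a degree~$6$ del Pezzo with $\rkPic=1$, and in any case none of them touches degree~$9$. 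Your conic-bundle reduction is also circular, as you yourself note: you invoke Lemma~\ref{lemma:Sarkisov-IV} ``once I know $S'$ has degree $6$'', which is the conclusion you are after.

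The paper's proof sidesteps all of this. It does not try to eliminate degrees one by one; instead it decomposes the birational map $S\dasharrow S'$ into a chain of Sarkisov links $S=S_0\dasharrow S_1\dasharrow\cdots\dasharrow S_n=S'$ and appeals directly to Iskovskikh's classification \cite[Theorem~2.6]{Iskovskikh96}: from a del Pezzo surface of degree~$6$ with $\rkPic=1$ and no $\KK$-points, every Sarkisov link lands on another del Pezzo surface of degree~$6$ with $\rkPic=1$. Lang--Nishimura propagates the pointlessness along the chain, and induction on the number of links finishes the argument in two lines. The moral is that the Sarkisov link classification already encodes the rigidity you are trying to reconstruct by hand; invoking it directly is both shorter and avoids the Severi--Brauer obstruction that your argument cannot close.
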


\begin{proof}
Choose a birational map $S\dasharrow S'$, and decompose it into a sequence
of Sarkisov links
$$
S\stackrel{\chi_1}\dasharrow S_1\stackrel{\chi_2}\dasharrow\ldots
\stackrel{\chi_n}\dasharrow S_n=S'.
$$
We know from \cite[Theorem~2.6]{Iskovskikh96} that 
$S_1$ is again a del Pezzo surface of degree $6$ with~\mbox{$\rkPic(S_1)=1$}.
Moreover, one has $S_i(\KK)=\varnothing$ 
by Theorem~\ref{theorem:Lang-Nishimura}.
Thus, the required assertion follows by
induction on~$n$.
\end{proof}

\begin{corollary}\label{corollary:dP6-birational}
Let $S$ be a del Pezzo surface of degree $6$
over a field $\KK$ of characteristic zero
such that $\rkPic(S)=1$. 
Suppose that $S$ has no points of degree $1$ and $2$ over $\KK$. 
Let~$S'$ be a smooth surface over $\KK$
such that $S'$ is either a del Pezzo
surface or a conic bundle.
Suppose that $S'$ is birational to $S$.
Then $S'$ is a del Pezzo surface of degree $3$ or~$6$;
in the latter case one has $\rkPic(S')=1$. Furthermore, $S'$ is not 
a conic bundle. 
\end{corollary}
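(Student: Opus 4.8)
The plan is to run the Minimal Model Program on $S'$, to identify its output using Lemma~\ref{lemma:dP6-birational}, and to use the absence of points of small degree to pin down the possible anticanonical degrees. The key preliminary remark is that, by Theorem~\ref{theorem:Lang-Nishimura} and the hypothesis on $S$, \emph{every} surface birational to $S$ over $\KK$ has no points of degree $1$ or $2$, and in particular no $\KK$-points.

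First I would show that no surface birational to $S$ is a conic bundle: if $T$ is such a surface carrying a conic bundle structure, then contracting extremal rays over the base of that structure we may assume $\rkPic(T)=2$, and Lemma~\ref{lemma:dP6-birational} then forces $T$ to be a del Pezzo surface of degree $6$ with $\rkPic(T)=1$, a contradiction. In particular $S'$ is not a conic bundle, so by hypothesis $S'$ is a del Pezzo surface. Now I run the MMP on $S'$: while $\rkPic>1$, the current surface is not a conic bundle, hence all its extremal contractions are birational, and contracting one yields a del Pezzo surface of larger anticanonical degree with $\rkPic$ dropping by one. Iterating, we obtain a chain of birational extremal contractions $S'=S'_0\to S'_1\to\dots\to S'_m=\bar S$ of del Pezzo surfaces with $\rkPic(\bar S)=1$. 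Each $S'_i$ is birational to $S$, so it has no points of degree at most $2$; since a del Pezzo surface of degree at most $2$ always carries a point of degree at most $2$, we get $\deg S'_i\ge 3$ for all $i$. And $\bar S$ is a del Pezzo surface of Picard rank $1$ birational to $S$, so Lemma~\ref{lemma:dP6-birational} gives $\deg\bar S=6$ and $\rkPic(\bar S)=1$.

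To finish I would read off $\deg S'$ from the chain. If $m=0$, then $S'=\bar S$ has degree $6$ and $\rkPic(S')=1$. If $m\ge 1$, consider the last contraction $S'_{m-1}\to\bar S$: it blows down a single $\Gal(\bar\KK/\KK)$-orbit of $k$ pairwise disjoint $(-1)$-curves, whose image is a point of $\bar S$ of degree $k$; since $\bar S$ has no points of degree at most $2$ we get $k\ge 3$, hence $\deg S'_{m-1}=6-k\le 3$, and combined with $\deg S'_{m-1}\ge 3$ this forces $\deg S'_{m-1}=3$. As the degrees along the chain increase strictly and are all at least $3$, this is possible only when $m=1$, so $\deg S'=3$. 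Thus in every case $S'$ is a del Pezzo surface of degree $3$ or $6$, with Picard rank $1$ when the degree is $6$, and we have already seen that $S'$ is not a conic bundle.

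The delicate part is the MMP bookkeeping: making sure the output is genuinely a del Pezzo surface of Picard rank $1$ --- this is exactly where Lemma~\ref{lemma:dP6-birational} is needed, to rule out getting stuck at a conic bundle of Picard rank $2$ --- together with the standard fact that an extremal birational contraction of a del Pezzo surface over $\KK$ contracts precisely a $\Gal(\bar\KK/\KK)$-orbit of pairwise disjoint $(-1)$-curves, so that its image acquires a closed point whose degree is the size of that orbit. Everything else is the numerical inequality $3\le\deg S'_{m-1}=6-k$ with $k\ge 3$, together with strict monotonicity of the degrees along the chain.
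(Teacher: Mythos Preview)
Your proof is correct and rests on the same two inputs as the paper's---Lemma~\ref{lemma:dP6-birational} and Theorem~\ref{theorem:Lang-Nishimura}---but the organization is different. The paper eliminates the degrees $d'=4,5,7,8,9$ (and $d'=6$ with $\rkPic>1$) case by case: degree~$5$ via the fact that such surfaces always have a $\KK$-point; degrees $\ge 7$ and $(6,\rkPic>1)$ by contracting to Picard rank~$1$ and invoking Lemma~\ref{lemma:dP6-birational}; degree~$4$ by contracting to a degree-$6$ surface and noting that the two contracted curves yield a point of degree at most~$2$. Your version is more uniform: you run the MMP once, identify the output as a degree-$6$ surface via Lemma~\ref{lemma:dP6-birational}, and then pin down $\deg S'\in\{3,6\}$ from the numerics of the last contraction, $3\le \deg S'_{m-1}=6-k$ with $k\ge 3$. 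This buys you a cleaner argument that in particular avoids the separate appeal to $\KK$-points on degree-$5$ del Pezzos; as a bonus it actually shows that in the degree-$3$ case $\rkPic(S')=2$. The only extra fact you invoke is that an extremal birational contraction of relative Picard rank one contracts a single Galois orbit of $(-1)$-curves, but even this is not strictly needed: it would suffice that the image contains \emph{some} closed point, which then has degree at least~$3$, forcing $k\ge 3$.
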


\begin{proof}
If there is a conic bundle structure 
$S'\to C$, we can perform extremal contractions
over $C$ if necessary and assume that $\rkPic(S')=2$; this is impossible
by Lemma~\ref{lemma:dP6-birational}.
Therefore, we will assume that $S'$ is a del Pezzo surface. Let $d'$
be the degree of $S'$.
Note that $S'$ does not contain points of degree $1$ and $2$ over $\KK$
by Theorem~\ref{theorem:Lang-Nishimura}.
In particular, one has~\mbox{$d'>2$}, because 
$S'$ always has a $\KK$-point of degree 
at most~$d'$.
Similarly, $d'\neq 5$ because a del Pezzo surface of degree $5$ always has
a $\KK$-point. 

Suppose that either $d'>6$, or $d'=6$ and $\rkPic(S')>1$. 
We can perform several extremal contractions if necessary
and obtain a del Pezzo surface~$S''$ of degree $d''>6$ 
and~\mbox{$\rkPic(S'')=1$}. 
The surface $S''$ is birational to $S$, which is impossible by
Lemma~\ref{lemma:dP6-birational}.

Now suppose that $d'=4$. By Lemma~\ref{lemma:dP6-birational}
one has $\rkPic(S')>1$.  Performing several extremal contractions, we 
obtain a del Pezzo surface $S''$ of degree $d''>d'$ 
and~\mbox{$\rkPic(S'')=1$}.
Again applying Lemma~\ref{lemma:dP6-birational}, we see that $d''=6$.
Thus the morphism $S'\to S''$ contracts two $(-1)$-curves over $\bar{\KK}$,
and so $S''$ contains a $\KK$-point of degree at most $2$ over $\KK$.
This is impossible by Theorem~\ref{theorem:Lang-Nishimura}.

Therefore, the only possible cases are $d'=3$ and $d'=6$, 
and in the latter case one has~\mbox{$\rkPic(S')=1$}.
\end{proof}

Let $S$ be a del Pezzo surface of degree $6$ over a field $\KK$ of characteristic 
zero. Recall from \cite[Theorem~8.4.2]{Dolgachev} that 
\begin{equation}\label{eq:dP6-Aut}
\Aut(S_{\bar{\KK}})\cong \left(\bar{\KK}^*\right)^2\rtimes 
(\SSS_3\times\mumu_2).
\end{equation}
The configuration of $(-1)$-curves on $S_{\bar{\KK}}$ can be interpreted as 
a hexagon, and the group~\mbox{$\SSS_3\times\mumu_2$} on the right hand side 
of~\eqref{eq:dP6-Aut} can be identified with the automorphism
group of this configuration. Note that there 
are two possible ways to choose the factor~$\SSS_3$ 
in~\mbox{$\SSS_3\times\mumu_2$};
we will always assume that this subgroup is chosen so that it preserves 
the two triples of pairwise disjoint $(-1)$-curves. 

\begin{lemma}\label{lemma:dP6-involution}
Let $S$ be a del Pezzo surface of degree $6$
over a field $\KK$ of characteristic zero.
Suppose that $S$ has no points of degree $1$ and $2$ over $\KK$.
Then the image of the group~\mbox{$\Aut(S)$} in $\SSS_3\times\mumu_2$
does not contain the central element $z$ of $\SSS_3\times\mumu_2$.
\end{lemma}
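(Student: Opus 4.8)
The plan is to argue by contradiction: suppose the central element $z\in\SSS_3\times\mumu_2$ lies in the image of $\Aut(S)$, and produce a point of degree $1$ or $2$ on $S$. First I would recall what the element $z$ does to the hexagon of $(-1)$-curves on $S_{\bar\KK}$. The factor $\SSS_3$ is chosen to preserve the two triples of pairwise disjoint $(-1)$-curves, so $z=(e,-1)$ is the ``antipodal'' symmetry of the hexagon: it fixes no $(-1)$-curve, interchanges the two triples, and swaps opposite vertices. Pick an automorphism $g\in\Aut(S)$ whose image in $\SSS_3\times\mumu_2$ is $z$. Since $z$ has order $2$ and acts on $\Pic(S_{\bar\KK})$ without nonzero invariants in the relevant quotient lattice, one computes $\rkPic(S_{\bar\KK})^{\langle z\rangle}$; the point is that the hexagon symmetry $z$ together with the torus part still leaves a positive-dimensional fixed locus on $S_{\bar\KK}$ (indeed $g$ has a fixed point on $S_{\bar\KK}$ for topological/Lefschetz reasons, or because its fixed locus is a curve cut out by an invariant anticanonical-type divisor).

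The key step is to analyze $\Fix_{S_{\bar\KK}}(g)$ and to descend. Because $g$ commutes with the Galois action (it is defined over $\KK$), the fixed locus $\Fix_{S_{\bar\KK}}(g)$ is $\Gal(\bar\KK/\KK)$-invariant, hence defined over $\KK$. I would show this fixed locus contains either a $\KK$-point directly or a $\KK$-rational curve of low degree (a line or conic on $S$ in the anticanonical embedding, or an anticanonical-type divisor); any such curve, being a smooth rational curve over $\KK$ of small anticanonical degree, carries a point of degree $1$ or $2$. Concretely: if $g$ has an isolated fixed point, that point is $\KK$-rational; if $\Fix(g)$ is one-dimensional, its components get permuted by $g$, but $g$ acts trivially on them (they are fixed pointwise), so each irreducible component is Galois-stable after grouping, and a component of low degree yields the desired point. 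This contradicts the hypothesis that $S$ has no points of degree $1$ or $2$ over $\KK$.

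The main obstacle I anticipate is pinning down exactly which subvariety of $S_{\bar\KK}$ is fixed by $g$. The automorphism $g$ is only determined up to the torus part $(\bar\KK^*)^2$ in \eqref{eq:dP6-Aut}, so different lifts of $z$ have different fixed loci, and I must either (a) choose the lift optimally, or (b) argue uniformly. A clean way around this is to pass to the quotient or to an invariant linear system: the antipodal hexagon symmetry $z$ identifies the two triples of disjoint $(-1)$-curves, and one of those triples, blown down over $\bar\KK$, realizes $S_{\bar\KK}$ as $\PP^2$ with the $g$-action becoming linear; then $g$ acts on $\PP^2$ as an order-$2$ (or order-$2$ times torus) projective transformation whose fixed locus is a point together with a line, and the line pulls back to a $\KK$-rational curve on $S$ of anticanonical degree at most a small constant. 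I would track the Galois-invariance through this $\bar\KK$-birational contraction (the triple of disjoint $(-1)$-curves need not be Galois-stable, so some care with a Galois twist is required — this is the genuinely delicate point) and conclude the existence of a point of degree $\le 2$, contradicting the hypothesis.
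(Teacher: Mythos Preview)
Your approach has a genuine gap in the analysis of the fixed locus of $g$. Any lift $g\in\Aut(S)$ of the central element $z$ is automatically an involution (since $z$ acts on the torus $(\bar\KK^*)^2$ by inversion, one checks $(t_0z)^2=1$), and on $S_{\bar\KK}$ its fixed locus consists of \emph{exactly four isolated points}: on the open torus the equation $t_0t^{-1}=t$ has four solutions, while on the boundary hexagon $g$ sends each $(-1)$-curve to the opposite one, which is disjoint from it. Equivalently, the Lefschetz number is $1+\operatorname{tr}(z\mid\Pic)+1=1+2+1=4$. So neither of your cases occurs: the fixed locus is not a single Galois-invariant point, nor is it one-dimensional. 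These four points form a $\KK$-scheme of degree $4$ (a torsor under $T[2]$), and there is no evident reason it must contain a closed point of degree $\le 2$; a priori it could be a single point of degree $4$, which would not contradict the hypothesis.

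Your fallback plan of blowing down one triple of disjoint $(-1)$-curves also breaks: since $z$ swaps the two triples, the resulting birational self-map of $\PP^2_{\bar\KK}$ is a (twisted) standard Cremona involution, not a linear automorphism, so the ``point plus line'' description of its fixed locus is not available. You correctly flag the Galois-descent issue here as the delicate point, but you do not indicate how to resolve it.

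The paper's argument is quite different and avoids fixed loci altogether. One passes to a quadratic extension $\LL/\KK$ over which the two triples of disjoint $(-1)$-curves are each Galois-stable; this yields two contractions $\pi_i\colon S_{\LL}\to B_i$ to Severi--Brauer surfaces over $\LL$. The birational map $\pi_2\circ\pi_1^{-1}$ is given (after base change to $\bar\KK$) by a linear system of conics, which forces $B_1$ and $B_2$ to correspond to \emph{opposite} classes in $\Br(\LL)$. But the lift $\tilde z\in\Aut(S_{\LL})$ interchanges $\pi_1$ and $\pi_2$, so $B_1\cong B_2$. A Severi--Brauer surface isomorphic to its opposite is trivial, hence $B_i\cong\PP^2_{\LL}$, $S_{\LL}$ has an $\LL$-point, and $S$ has a point of degree $\le 2$, contradiction.
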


\begin{proof}
Suppose that the image of $\Aut(S)$ contains $z$.
Let $\tilde{z}$ be an element of $\Aut(S)$ mapped to~$z$.  

There exists a quadratic extension $\LL/\KK$ such that
the image of the Galois group~\mbox{$\Gal\left(\bar{\KK}/\LL\right)$} 
in~\mbox{$\SSS_3\times\mumu_2$} 
is contained in the factor $\SSS_3$ of $\SSS_3\times\mumu_2$.
Thus~\mbox{$\Gal\left(\bar{\KK}/\LL\right)$} preserves the two triples of   
pairwise disjoint $(-1)$-curves on~$S_{\bar{\KK}}$. This means that 
the surface $S_{\LL}$ has two contractions 
$$
\pi_i\colon S_{\LL}\to B_i, \quad i=1,2, 
$$
where $B_i$ are Severi--Brauer surfaces 
over $\LL$. Consider the birational map 
$$
\pi_2\circ\pi_1^{-1}\colon B_1\dasharrow B_2.
$$
After extension of scalars to $\bar{\KK}$ it defines a birational 
map $\PP^2\dasharrow\PP^2$ that 
is given by a certain linear system of conics
(see \cite[\S2]{Shramov-SB} and references therein).
This implies that $B_1$ and $B_2$ correspond to opposite central simple
algebras, see e.g. \cite[Exercise~3.3.7(iii)]{GS}.
On the other hand, the element $\tilde{z}\in\Aut(S_{\LL})$ 
swaps the contractions $\pi_1$ and $\pi_2$, which means that 
$B_1\cong B_2$. 
Therefore, both $B_1$ and $B_2$ are isomorphic to $\PP^2$.
In particular, the surface $S_{\LL}$ has an $\LL$-point. Hence 
$S$ has a $\KK$-point of degree at most $2$, which contradicts our assumptions.
\end{proof}

\begin{lemma}\label{lemma:dP6-Aut}
Let $S$ be a del Pezzo surface of degree $6$
over a field $\KK$ of characteristic zero
such that $\rkPic(S)=1$.
Suppose that $S$ has no points of degree $1$ and $2$ over $\KK$.
Then the group $\Aut(S)$ has a normal abelian subgroup of index at most~$3$.
\end{lemma}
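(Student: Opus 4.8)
The goal is to bound the structure of $\Aut(S)$ for a del Pezzo surface $S$ of degree $6$ with $\rkPic(S)=1$ and no points of degree $1$ or $2$. The natural approach is to exploit the homomorphism
$$
\rho\colon\Aut(S)\to\SSS_3\times\mumu_2
$$
coming from the action on the hexagon of $(-1)$-curves on $S_{\bar\KK}$, together with the Galois-theoretic constraints that force $\rkPic(S)=1$. First I would analyze the kernel of $\rho$: an element in $\ker\rho$ acts on $S_{\bar\KK}$ by an element of the torus $(\bar\KK^*)^2$ in~\eqref{eq:dP6-Aut}, so $\ker\rho$ is abelian (indeed commutes with the Galois action in a way compatible with being a subgroup of a torus). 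Hence it suffices to control the image $\rho(\Aut(S))\subset\SSS_3\times\mumu_2$ and lift a bound back through the abelian kernel — if the image is abelian then $\Aut(S)$ is metabelian, and more precisely one wants an abelian subgroup of index at most $3$.

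**Key steps.** The crucial input is Lemma~\ref{lemma:dP6-involution}: under our hypotheses the image $\rho(\Aut(S))$ does \emph{not} contain the central element $z$ of $\SSS_3\times\mumu_2$. Now $z$ generates the center, and one checks directly that every subgroup of $\SSS_3\times\mumu_2$ that avoids $z$ is isomorphic to a subgroup of $\SSS_3$ (this is the elementary group-theory core: the subgroups of $\SSS_3\times\mumu_2$ not containing $z=(\,(123)^0\text{-part}\cdot\text{sign},\,-1)$ — more precisely the element that is the product of the order-$2$ generators of the two factors — are exactly those mapping injectively to one of the $\SSS_3$ quotients, hence are $1$, $\mumu_2$, $\mumu_3$, or $\SSS_3$). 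So $\rho(\Aut(S))$ is a subgroup of $\SSS_3$. Such a group has a normal abelian (cyclic) subgroup of index at most $2$; pulling this back, let $N=\rho^{-1}(\text{that index-}{\le}2\text{ subgroup of the image})$. Then $N$ is normal in $\Aut(S)$ of index at most $2$, and $N$ is an extension of a cyclic group by the abelian kernel. To finish I would note that $\Aut(S)$, being a subgroup of the crystallographic-type group $(\bar\KK^*)^2\rtimes(\SSS_3\times\mumu_2)$, is in fact constrained much further, and combine with the fact — which the reader should verify from the list of possibilities — that the relevant metabelian subgroups here actually admit an abelian subgroup of index at most $3$. The cleanest route is: the image lies in $\SSS_3$, the preimage of the $3$-cycle subgroup $\mumu_3\subset\SSS_3$ has index at most $2$ in $\Aut(S)$ and is abelian (torus part extended by a cyclic group acting on it — but here one uses that an order-$3$ element of $\SSS_3$ acting on the hexagon either acts trivially on Picard after descent, contradicting $\rkPic(S)=1$ unless... ) — so one concludes the preimage of $\mumu_3$ is abelian, giving a normal abelian subgroup of index $\le 2$, which is even stronger than index $\le 3$; the index-$3$ slack is what remains if instead one only knows $\rho(\Aut(S))\subset\SSS_3$ without pinning down the transposition behaviour.

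**Main obstacle.** The delicate point is showing that the preimage $\rho^{-1}(\mumu_3)$ (or whatever index-$\le 3$ subgroup one settles on) is genuinely abelian, not merely metabelian. A subgroup of the torus $(\bar\KK^*)^2$ is abelian, and extending by an order-$3$ element $g$ gives an abelian group precisely when $g$ centralizes that torus subgroup. This is where the hypothesis $\rkPic(S)=1$ must be used seriously: it forces the Galois image in $\SSS_3\times\mumu_2$ to be large (it cannot land in too small a subgroup, or $\rkPic(S)$ would exceed $1$), and since $\Aut(S)$ must \emph{commute} with this Galois image, the automorphisms are heavily constrained — in particular an order-$3$ automorphism permuting the two triples of disjoint $(-1)$-curves, together with a Galois element doing something transverse, pins down the action on the torus and makes the relevant subgroup abelian. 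I expect the bulk of the work to be a short case analysis of the Galois image inside $\SSS_3\times\mumu_2$ (there are only finitely many subgroups, and $\rkPic(S)=1$ plus the no-low-degree-points hypothesis eliminate most of them), followed by checking in each surviving case that the centralizer of the Galois image inside $(\bar\KK^*)^2\rtimes(\SSS_3\times\mumu_2)$ has a normal abelian subgroup of index at most $3$. None of these steps is hard individually; the obstacle is organizing the case analysis so that the bound $3$ (rather than, say, $6$) comes out sharp, which is exactly what makes this lemma the one feeding into assertion~(ii) of Theorem~\ref{theorem:cubic}.
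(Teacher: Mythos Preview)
Your ingredients are right—the abelian torus kernel of $\rho$, Lemma~\ref{lemma:dP6-involution}, and the observation that $\Aut(S)$ must commute with the Galois image—but the route in your ``Key steps'' has a genuine gap, which you yourself flag in ``Main obstacle''. After using Lemma~\ref{lemma:dP6-involution} you only obtain that $\rho(\Aut(S))$ is (isomorphic to) a subgroup of $\SSS_3$; the preimage $\rho^{-1}(\mumu_3)$ is then an extension of $\mumu_3$ by a subgroup of the torus, and such an extension need \emph{not} be abelian, since a $3$-cycle in $\SSS_3$ acts nontrivially on $(\bar\KK^*)^2$. The case analysis of Galois images inside the full semidirect product that you propose as a fix would eventually work, but it is more than is needed.

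The paper's argument avoids this obstacle entirely by applying the Galois constraint already at the level of the finite quotient $\SSS_3\times\mumu_2$, rather than inside the whole group $(\bar\KK^*)^2\rtimes(\SSS_3\times\mumu_2)$. The hypothesis $\rkPic(S)=1$ forces the Galois image $\Gamma\subset\SSS_3\times\mumu_2$ to contain an element $\gamma$ of order~$3$; since $\rho(\Aut(S))$ commutes with $\Gamma$, it lies in the centralizer of $\gamma$, which is exactly $\langle\gamma,z\rangle\cong\mumu_6$. Lemma~\ref{lemma:dP6-involution} then rules out $z$, so $\rho(\Aut(S))\subset\langle\gamma\rangle$ has order at most~$3$. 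Hence $\ker\rho$ itself—automatically abelian, being a subgroup of the torus—already has index at most~$3$ in $\Aut(S)$, and the question of whether any larger preimage is abelian never arises.
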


\begin{proof}
Let $\Gamma$ be the image of the group 
$\Gal\left(\bar{\KK}/\KK\right)$ in $\SSS_3\times\mumu_2$.
Since $\rkPic(S)=1$, we conclude that $\Gamma$ contains 
an element $\gamma$ of order $3$. 
The image $\Delta$ of $\Aut(S)$ in $\SSS_3\times\mumu_2$
commutes with $\Gamma$, and thus it is contained 
in the centralizer of $\gamma$, which is the group of order $6$ generated 
by $\gamma$ and the 
central element $z$ of $\SSS_3\times\mumu_2$. 
Also, we know from Lemma~\ref{lemma:dP6-involution}
that $\Delta$ does not contain the 
element $z$. 
Hence $\Delta$ is contained in the group of order $3$ generated by $\gamma$, 
and so the
kernel of the homomorphism $\Aut(S)\to \SSS_3\times\mumu_2$ is an abelian group 
of index at most $3$ in $\Aut(S)$.
\end{proof}

\begin{corollary}\label{corollary:J-dP6} 
Let $S$ be a del Pezzo surface of degree $6$
over a field $\KK$ of characteristic zero
such that $\rkPic(S)=1$.
Suppose that $S$ has no points of degree $1$ and $2$ over $\KK$.
Let $G$ be a finite subgroup of $\Bir(S)$. Then 
$G$ has a normal abelian subgroup of index at most~$3$.
\end{corollary}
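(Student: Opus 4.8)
The plan is to reduce the statement to Lemma~\ref{lemma:dP6-Aut} by passing from the finite subgroup $G\subset\Bir(S)$ to a birational model on which $G$ acts by automorphisms. First I would run $G$-equivariant Minimal Model Program: there exists a smooth projective surface $S'$ over $\KK$ with a $G$-action, a $G$-equivariant birational map $S\dashrightarrow S'$, and such that $S'$ is either a del Pezzo surface with $\rkPic(S')^G=1$ or a $G$-equivariant conic bundle with $\rkPic(S')^G=2$. Since $S$ itself has $\rkPic(S)=1$, hence $\rkPic(S)^G=1$, we may moreover take $S'$ to be birational to $S$ over $\KK$ (forgetting the $G$-action). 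Now Corollary~\ref{corollary:dP6-birational} applies: $S'$ is not a conic bundle, so $S'$ is a del Pezzo surface, and it is of degree $3$ or $6$, and in the latter case $\rkPic(S')=1$.

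Next I would treat the two cases. If $S'$ is a del Pezzo surface of degree $6$ with $\rkPic(S')=1$, then since $S'$ is birational to $S$ it has no $\KK$-points of degree $1$ or $2$ by Theorem~\ref{theorem:Lang-Nishimura}, so $G\subset\Aut(S')$ has a normal abelian subgroup of index at most $3$ directly by Lemma~\ref{lemma:dP6-Aut}. If $S'$ is a smooth cubic surface, then again by Theorem~\ref{theorem:Lang-Nishimura} it has $S'(\KK)=\varnothing$ (as $S$ has no points of degree $1$, a fortiori no $\KK$-points), so the full group $\Aut(S')$ is abelian by Lemma~\ref{lemma:cubic-Aut}, and $G\subset\Aut(S')$ is therefore abelian, which is even stronger than required.

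The main obstacle is justifying the existence of the $G$-equivariant model $S'$ with the stated Picard-rank constraints and with the property that it is (non-equivariantly) birational to $S$. This is the standard $G$-equivariant two-ray game / MMP for surfaces over a perfect field: starting from any smooth projective $G$-surface birational to $S$, one contracts $G$-orbits of $(-1)$-curves until reaching a $G$-minimal model, which is either a del Pezzo surface with $\rkPic^G=1$ or a conic bundle with $\rkPic^G=2$; all these operations are birational over $\KK$, so the resulting $S'$ is birational to $S$, and one can then feed it into Corollary~\ref{corollary:dP6-birational}. Once this model is in hand, the rest is just the two-case dichotomy above, each case being an immediate appeal to a lemma already proved. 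I would cite the equivariant MMP in the form used in \cite{Iskovskikh96} (or the classical references therein) for the existence of $S'$.
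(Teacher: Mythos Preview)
Your proposal is correct and follows essentially the same approach as the paper: regularize the $G$-action, run the $G$-equivariant Minimal Model Program to reach a model $S'$, apply Corollary~\ref{corollary:dP6-birational} to see that $S'$ is either a smooth cubic surface or a del Pezzo surface of degree~$6$ with $\rkPic(S')=1$, and then conclude via Lemma~\ref{lemma:cubic-Aut} or Lemma~\ref{lemma:dP6-Aut} respectively. The only superfluous remark is your sentence about $\rkPic(S)^G=1$ ensuring that $S'$ is birational to $S$; this is automatic, since the $G$-equivariant birational map $S\dashrightarrow S'$ is in particular a birational map over~$\KK$.
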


\begin{proof}
Regularizing the action of $G$ and running a $G$-Minimal Model Program,
we obtain a surface $S'$ with an action of $G$ and a $G$-equivariant
birational map $S\dasharrow S'$,
such that~$S'$ is either a del Pezzo
surface, or a conic bundle.
Note that $S'$ has no points of degree~$1$ and~$2$ over $\KK$
by Theorem~\ref{theorem:Lang-Nishimura}. 
Applying Corollary~\ref{corollary:dP6-birational}, we see that 
$S'$ is either a cubic surface, or a del Pezzo surface of degree $6$ with 
$\rkPic(S')=1$. In the former case the group~$G$ is abelian 
by Lemma~\ref{lemma:cubic-Aut}.
In the latter case the assertion follows from Lemma~\ref{lemma:dP6-Aut}.
\end{proof}

We conclude this section by a classification of birational models of 
cubic surfaces without points and del Pezzo surfaces of degree $6$ 
without points of degree $1$ and $2$ over the base field. It is not 
necessary for the proof of the main results of this paper, but we find it 
interesting on its own.

\begin{lemma}[{cf. Lemma~\ref{lemma:birational-to-SB}}]
\label{lemma:dP6-models}
Let $S$ be a del Pezzo surface of degree $6$
over a field $\KK$ of characteristic zero
such that $S$ has no points of degree $1$ and $2$ over $\KK$.
The following assertions hold.
\begin{itemize}
\item[(i)] One has $\rkPic(S)\le 2$.

\item[(ii)] If $\rkPic(S)=2$, then $S$ is birational
to a non-trivial Severi--Brauer surface.
\end{itemize}
\end{lemma}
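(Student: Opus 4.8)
The plan is to argue exactly as in the proof of Lemma \ref{lemma:birational-to-SB}, using the extremal contraction machinery and the degree restrictions established earlier in this section. First I would reduce to the case $\rkPic(S)\ge 2$, since otherwise there is nothing to prove for (i) and (ii) is vacuous. Assuming $\rkPic(S)\ge 2$, I want to produce an extremal contraction $S\to S'$ to a smooth surface $S'$: if $\rkPic(S)\ge 3$ such a contraction exists automatically, and if $\rkPic(S)=2$ one uses Lemma \ref{lemma:Sarkisov-IV} (noting that the degree of $S$ is $6$, which is not among $1,2,4,8$) to conclude that at least one of the two extremal contractions is a birational morphism $S\to S'$ onto a smooth surface, so $S'$ is a del Pezzo surface of degree $d'>6$ with $\rkPic(S')=\rkPic(S)-1$.

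Next I would pin down $d'$. By Theorem \ref{theorem:Lang-Nishimura}, $S'$ inherits the property of having no points of degree $1$ and $2$ over $\KK$; in particular $d'\ne 7$ and $d'\ne 8$, since a del Pezzo surface of degree $7$ always has a $\KK$-point and one of degree $8$ always has a point of degree at most $2$ (cf.\ the argument in Lemma \ref{lemma:dP8}, or simply the remark that a degree $7$ surface contains a $(-1)$-curve defined over $\KK$). Hence $d'=9$, so $S'$ is a Severi--Brauer surface and $\rkPic(S')=1$, which forces $\rkPic(S)=2$. This simultaneously proves (i), since we have shown $\rkPic(S)\ge 3$ is impossible, and it shows that when $\rkPic(S)=2$ the surface $S$ is birational to a Severi--Brauer surface $S'$. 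Finally, $S'$ must be \emph{non-trivial}: a trivial Severi--Brauer surface is $\PP^2$, which has a $\KK$-point, contradicting (via Theorem \ref{theorem:Lang-Nishimura}) the fact that $S$ has no point of degree $1$. This gives (ii).

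The main obstacle is purely bookkeeping: I need to be careful about which degree $8$ arguments are available here, since Lemma \ref{lemma:dP8} as stated is phrased for birational models of \emph{cubic} surfaces, whereas here the ambient object is a del Pezzo surface of degree $6$. However, the only input actually used is that a del Pezzo surface of degree $8$ over $\KK$ with no point of degree $\le 2$ cannot exist, and the proof of Lemma \ref{lemma:dP8} establishes precisely this (it never uses that the surface is birational to a cubic — it only uses the absence of low-degree points, which we have here by hypothesis together with Theorem \ref{theorem:Lang-Nishimura}). So I would either restate the relevant degree $7,8$ nonexistence as a small separate observation or cite the internal argument of Lemma \ref{lemma:dP8} directly; everything else is a routine descent through extremal contractions.
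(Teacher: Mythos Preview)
Your overall strategy matches the paper's: reduce to $\rkPic(S)\ge 2$, invoke Lemma~\ref{lemma:Sarkisov-IV} to get a birational extremal contraction $S\to S'$ with $d'>6$ and $\rkPic(S')=\rkPic(S)-1$, then rule out $d'=7,8$ to force $d'=9$. The gap is in how you rule out $d'=8$. You assert that ``a del Pezzo surface of degree~$8$ always has a point of degree at most~$2$'' and that ``the proof of Lemma~\ref{lemma:dP8} establishes precisely this (it never uses that the surface is birational to a cubic).'' Both claims are incorrect. The proof of Lemma~\ref{lemma:dP8} explicitly uses that the cubic surface carries a point of degree~$3$, transports this via Theorem~\ref{theorem:Lang-Nishimura} to the degree-$8$ surface, and then derives a $\KK$-point; without that degree-$3$ point the argument collapses. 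And the general statement is false: there exist products $C_1\times C_2$ of pointless conics with no common quadratic splitting field (equivalently, the associated biquaternion algebra is division), so such surfaces have no point of degree~$\le 2$. This is exactly why the Remark following Lemma~\ref{lemma:dP8} only says ``in some cases'' the low-degree point exists on the degree-$8$ model.

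The fix is the paper's own one-line argument, which avoids any appeal to Lemma~\ref{lemma:dP8}: if $d'\le 8$, then $\pi\colon S\to S'$ contracts $d'-6\le 2$ disjoint $(-1)$-curves on $S_{\bar\KK}$, so their images form a $\Gal(\bar\KK/\KK)$-invariant set of at most two points on $S'_{\bar\KK}$, giving $S'$ a point of degree~$\le 2$ and hence (by Theorem~\ref{theorem:Lang-Nishimura}) $S$ one as well, contrary to hypothesis. This handles $d'=7$ and $d'=8$ simultaneously and does not require any auxiliary structure on~$S'$. Once you make this replacement, the rest of your proof goes through and coincides with the paper's.
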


\begin{proof}
Suppose that $\rkPic(S)\ge 2$.
Keeping in mind Lemma~\ref{lemma:Sarkisov-IV}, we see that 
there exists an extremal contraction~\mbox{$\pi\colon S\to S'$} 
to a del Pezzo surface $S'$ of degree $d'>6$ with
$$
\rkPic(S')=\rkPic(S)-1.
$$
The surface $S'$ has no points of degree $1$ and $2$ over $\KK$
by Theorem~\ref{theorem:Lang-Nishimura}.
If $d'\le 8$, then $\pi$ contracts at most two 
$(-1)$-curves on $S_{\bar{\KK}}$. This implies that $S'$ has
a $\KK$-point of degree at most $2$, which is a contradiction.
Hence $d'=9$, so that $S'$ is a non-trivial Severi--Brauer surface.
In particular, one has $\rkPic(S')=1$ and $\rkPic(S)=2$.
\end{proof}

We complete this section by a classification of birational models 
of smooth cubic surfaces without points and del Pezzo surfaces of degree~$6$ 
without points of degree~$1$ and~$2$. 

\begin{proof}[Proof of Proposition~\ref{proposition:all-birational-models}]
Note that $S$ has no points of degree $1$ and $2$ over $\KK$.
If $S$ is a del Pezzo surface of degree $6$, this holds by assumption;
if $S$ is a cubic surface, then this holds by Lemma~\ref{lemma:points-deg-2};
if~$S$ is a Severi--Brauer surface, this is well known,
see e.g.~\mbox{\cite[Theorem~53(2)]{Kollar-SB}}. Therefore, by  
Theorem~\ref{theorem:Lang-Nishimura} any surface $S'$ birational 
to $S$ has no points of degree $1$ and~$2$ over~$\KK$.

Suppose that $S$ is a non-trivial Severi--Brauer surface.
Then the required 
assertion follows from~\mbox{\cite[Corollary~2.4]{Shramov-SB}}.

Suppose that $S$ is a del Pezzo surface of degree $6$ with no points
of degree $1$ and $2$ over $\KK$. Then $\rkPic(S)\le 2$ 
by Lemma~\ref{lemma:dP6-models}(i).
If $\rkPic(S)=1$, the assertion is given by 
Corollary~\ref{corollary:dP6-birational}.  
If $\rkPic(S)=2$, we know from Lemma~\ref{lemma:dP6-models}(ii) 
that $S$ is birational to a Severi--Brauer surface, and the required assertion 
follows. 

Finally, suppose that $S$ is a smooth cubic surface with $S(\KK)=\varnothing$. 
Then~\mbox{$\rkPic(S)\le 3$} by Lemma~\ref{lemma:birational-to-SB}(i).
If $\rkPic(S)=1$, the assertion is given by 
Corollary~\ref{corollary:minimal-cubic}.  
If~\mbox{$\rkPic(S)\ge 2$}, 
we know from Lemma~\ref{lemma:birational-to-SB}(ii),(iii)
that $S$ is birational either to a del Pezzo surface $S'$ 
of degree $6$ with~\mbox{$\rkPic(S')=1$}, or to a 
Severi--Brauer surface.
Therefore, in these cases the required assertion is implied by the 
above arguments.
\end{proof}

\section{Birational automorphism groups}
\label{section:Bir}

In this section we prove the remaining assertions 
of Theorem~\ref{theorem:cubic}.
Let us start with deriving some consequences from Sections~\ref{section:cubic} 
and~\ref{section:dP6}. 

\begin{corollary}\label{corollary:J-rkPic-3}
Let $S$ be a smooth cubic surface
over a field $\KK$ of characteristic zero
with~\mbox{$S(\KK)=\varnothing$} and $\rkPic(S)=3$.
Then every finite subgroup of $\Bir(S)$ contains a normal abelian 
subgroup of index at most $3$. Moreover, 
there exists a field $\KK_0$ of characteristic
zero, a smooth cubic surface $S_0$ over $\KK_0$ 
with~\mbox{$S_0(\KK_0)=\varnothing$} 
and $\rkPic(S_0)=3$,
and a finite subgroup $G_0\subset\Bir(S_0)$ such that
the minimal index of a normal abelian subgroup in $G_0$ equals~$3$.
\end{corollary}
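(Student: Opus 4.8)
The first assertion is immediate. By Lemma~\ref{lemma:birational-to-SB}(ii) the surface $S$ is birational to a non-trivial Severi--Brauer surface $\tilde S$, so $\Bir(S)\cong\Bir(\tilde S)$; and by Theorem~\ref{theorem:SB} every finite subgroup of $\Bir(\tilde S)$ is either abelian or has a normal abelian subgroup of index $3$, so in either case it contains a normal abelian subgroup of index at most~$3$.

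For the second assertion the plan is to transport the sharp example of Theorem~\ref{theorem:SB} along a chain of blow-ups. Fix a field $\KK_0$ of characteristic zero, a non-trivial Severi--Brauer surface $\tilde S_0$ over $\KK_0$, and a finite subgroup $G_0\subset\Aut(\tilde S_0)$ whose minimal index of a normal abelian subgroup equals~$3$; such data exist by Theorem~\ref{theorem:SB}. As in~\cite{Shramov-SB}, the surface $\tilde S_0$ carries a point of degree $3$ whose blow-up is a del Pezzo surface $Y$ of degree $6$ with $\rkPic(Y)=2$; moreover $Y$ has no points of degree $1$ or $2$ by Theorem~\ref{theorem:Lang-Nishimura}. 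By Theorem~\ref{theorem:Lang-Nishimura} the surface $Y$ again has a point of degree $3$; blowing up such a point, chosen so that the six geometric points involved are in general position, yields a smooth cubic surface $S_0$ with $\rkPic(S_0)=3$ birational to $\tilde S_0$ over $\KK_0$. Then $S_0$ has no points of degree $1$ or $2$ by Theorem~\ref{theorem:Lang-Nishimura}, so in particular $S_0(\KK_0)=\varnothing$. Finally $\Bir(S_0)\cong\Bir(\tilde S_0)\supseteq\Aut(\tilde S_0)\supseteq G_0$, so $G_0$ is a finite subgroup of $\Bir(S_0)$; and the minimal index of a normal abelian subgroup of $G_0$, being an invariant of the abstract group $G_0$, still equals~$3$, consistently with the first assertion (which guarantees the index is at most~$3$). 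Thus $(\KK_0,S_0,G_0)$ is the desired example.

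The main difficulty is the last blow-up: the point of degree $3$ on $Y$ produced by Lang--Nishimura need not lie in general position relative to the first one, so that its blow-up need not be a (smooth) del Pezzo surface. To overcome this one must be careful with the choice of $\KK_0$ and of the central simple algebra $A_0$ corresponding to $\tilde S_0$: either one exhibits a sufficiently generic sharp example of Theorem~\ref{theorem:SB} and checks general position directly, or one runs the construction of Section~\ref{section:main-example} in reverse, building a cubic surface $S_0$ over a rational function field in several transcendental parameters so that $S_0$ has no $\KK_0$-points and its $27$ lines are defined over a $\mumu_3\times\mumu_3$-extension of $\KK_0$ with Galois image in $\WE$ acting as in Section~\ref{section:main-example} — which forces $\rkPic(S_0)\ge 3$, hence $\rkPic(S_0)=3$ by Lemma~\ref{lemma:birational-to-SB}(i) — while simultaneously choosing the parameters so that the non-trivial Severi--Brauer surface birational to $S_0$ realizes the bound of Theorem~\ref{theorem:SB}. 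The naive coordinate or coefficient symmetries of a diagonal cubic are useless here, since they are either not defined over $\KK_0$ or trivialize the relevant Brauer class, so the non-abelian group $G_0$ has to be built from a cyclic cubic subfield of $A_0$; granting that, the remaining verifications (pointlessness, the exact Picard rank, and the structure of $G_0$) are routine and parallel to Section~\ref{section:main-example}.
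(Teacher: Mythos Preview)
Your first paragraph is correct and matches the paper's proof verbatim.

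For the second assertion, however, you have identified a genuine difficulty and then not resolved it. You correctly observe that the point of degree~$3$ supplied by Lang--Nishimura carries no positional information, so the second blow-up might fail to produce a del Pezzo surface; but your two proposed workarounds (a ``sufficiently generic'' sharp example, or reverse-engineering Section~\ref{section:main-example} with extra parameters) are only sketched, and the phrase ``granting that'' is an admission that the argument is incomplete. In particular, the second workaround would require you to verify that a specific Severi--Brauer surface arising from a diagonal cubic over a function field actually realises the non-abelian bound of Theorem~\ref{theorem:SB}, which is not something you can read off from Section~\ref{section:main-example} and is not obviously compatible with the construction in~\cite{Shramov-SB-short}.

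The paper closes this gap in one line: on \emph{any} non-trivial Severi--Brauer surface there exist two points of degree~$3$ in general position, by~\cite[Theorem~53(3)]{Kollar-SB}. Hence one may blow up $\tilde S_0$ directly at such a pair to obtain a smooth cubic $S_0$ with $\rkPic(S_0)=3$ and $S_0(\KK_0)=\varnothing$, with no need for special choices of $\KK_0$ or $A_0$. Once you invoke this existence result, the rest of your argument (transport of $G_0$ along the birational map, invariance of the minimal index) is fine.
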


\begin{proof}
We know from Lemma~\ref{lemma:birational-to-SB}(ii) that 
$S$ is birational to a non-trivial 
Severi--Brauer surface. 
Therefore, the first of the required assertions follows from 
Theorem~\ref{theorem:SB}. 

To prove the second assertion, 
take a field $\KK_0$ of characteristic
zero and a non-trivial Severi--Brauer surface $S'_0$ over $\KK_0$ such
that $\Aut(S'_0)$ contains a non-abelian finite subgroup~$G_0$; these exist 
by Theorem~\ref{theorem:SB}. Then the minimal index 
of a normal abelian subgroup in $G_0$ equals $3$. Now we construct 
a cubic surface $S_0$ as a blow up of $S'_0$ at two points of degree $3$ 
over $\KK_0$ in general position; note that such points always 
exist, see e.g.~\mbox{\cite[Theorem~53(3)]{Kollar-SB}}.  
Thus $S_0$ is a cubic surface without $\KK_0$-points, and $\Bir(S_0)$
contains the group~$G_0$.
\end{proof}

\begin{corollary}\label{corollary:J-rkPic-2}
Let $S$ be a smooth cubic surface
over a field $\KK$ of characteristic zero
with~\mbox{$S(\KK)=\varnothing$} and $\rkPic(S)=2$.
Then every finite subgroup of $\Bir(S)$ contains an  abelian
subgroup of index at most $3$.
\end{corollary}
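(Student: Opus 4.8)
The plan is to reduce the statement to the two possible birational models of $S$ provided by Lemma~\ref{lemma:birational-to-SB}(iii). Since $\rkPic(S)=2$, that lemma tells us that $S$ is birational either to a non-trivial Severi--Brauer surface, or to a del Pezzo surface $S'$ of degree $6$ with $\rkPic(S')=1$. Fix a finite subgroup $G\subset\Bir(S)$; a birational map between surfaces induces an isomorphism of their groups of birational automorphisms, so in either case $G$ is conjugate to a finite subgroup of the birational automorphism group of the model, and it suffices to treat these two models separately.

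In the Severi--Brauer case I would simply quote Theorem~\ref{theorem:SB}: every finite subgroup of the birational automorphism group of a non-trivial Severi--Brauer surface over a field of characteristic zero is either abelian or contains a normal abelian subgroup of index $3$; in both situations there is an abelian subgroup of index at most $3$, which is exactly what is claimed. In the del Pezzo degree $6$ case, I would first record that $S$ has no points of degree $1$ and $2$ over $\KK$ by Lemma~\ref{lemma:points-deg-2}, hence the birational model $S'$ has no such points either by Theorem~\ref{theorem:Lang-Nishimura}; together with $\rkPic(S')=1$ this puts us precisely in the situation of Corollary~\ref{corollary:J-dP6}, which gives that every finite subgroup of $\Bir(S')$ — and hence $G$ — has a normal abelian subgroup of index at most $3$. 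Combining the two cases finishes the argument.

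There is essentially no serious obstacle here: all the substantive work has already been carried out in Sections~\ref{section:cubic} and~\ref{section:dP6}, and this corollary is a bookkeeping consequence. The only points requiring a moment's care are (a) confirming that passing through a birational map preserves finiteness of subgroups and the property of having an abelian subgroup of bounded index (both immediate), and (b) verifying that the hypothesis "no points of degree $1$ and $2$" needed by Corollary~\ref{corollary:J-dP6} is inherited by $S'$ from $S$ via Lang--Nishimura. Once these are in place, the conclusion follows directly.

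\begin{proof}
By Lemma~\ref{lemma:birational-to-SB}(iii), the surface $S$ is birational either to a non-trivial Severi--Brauer surface, or to a del Pezzo surface $S'$ of degree $6$ with $\rkPic(S')=1$. Let $G$ be a finite subgroup of $\Bir(S)$.

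Suppose $S$ is birational to a non-trivial Severi--Brauer surface. Then $\Bir(S)$ is isomorphic to the birational automorphism group of that surface, and $G$ corresponds to a finite subgroup of it. By Theorem~\ref{theorem:SB} this subgroup is either abelian, or contains a normal abelian subgroup of index $3$; in either case $G$ contains an abelian subgroup of index at most $3$.

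Now suppose $S$ is birational to a del Pezzo surface $S'$ of degree $6$ with $\rkPic(S')=1$. By Lemma~\ref{lemma:points-deg-2} the surface $S$ has no points of degree $1$ and $2$ over $\KK$, hence so does $S'$ by Theorem~\ref{theorem:Lang-Nishimura}. Thus $S'$ satisfies the hypotheses of Corollary~\ref{corollary:J-dP6}. Since $\Bir(S')\cong\Bir(S)$, the subgroup $G$ corresponds to a finite subgroup of $\Bir(S')$, which by Corollary~\ref{corollary:J-dP6} has a normal abelian subgroup of index at most $3$. Hence $G$ contains an abelian subgroup of index at most $3$.
\end{proof}
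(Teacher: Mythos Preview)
Your proof is correct and follows essentially the same route as the paper: invoke Lemma~\ref{lemma:birational-to-SB}(iii) to split into the Severi--Brauer and degree-$6$ del Pezzo cases, apply Theorem~\ref{theorem:SB} in the first, and in the second use Lemma~\ref{lemma:points-deg-2} together with Theorem~\ref{theorem:Lang-Nishimura} to feed into Corollary~\ref{corollary:J-dP6}. The only difference is cosmetic---you spell out the passage through the isomorphism $\Bir(S)\cong\Bir(S')$ a bit more explicitly.
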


\begin{proof}
We know from Lemma~\ref{lemma:birational-to-SB}(iii)
that $S$ is birational either to a non-trivial Severi--Brauer surface, 
or to a del Pezzo surface 
$S'$ of degree~$6$ with $\rkPic(S')=1$.
In the former case the assertion follows from Theorem~\ref{theorem:SB}. 
In the latter case we 
recall that $S$ does not contain points of degree $1$ and $2$ 
over~$\KK$ by Lemma~\ref{lemma:points-deg-2}.  
Hence $S'$ also does not contain points of degree $1$ and $2$ over
$\KK$ by Theorem~\ref{theorem:Lang-Nishimura}. Therefore, the required 
assertion follows from Corollary~\ref{corollary:J-dP6}.
\end{proof}

\begin{corollary}\label{corollary:J-rkPic-1}
Let $S$ be a smooth cubic surface
over a field $\KK$ of characteristic zero
with~\mbox{$S(\KK)=\varnothing$} and $\rkPic(S)=1$.
Then every finite subgroup of $\Bir(S)$ is abelian.
\end{corollary}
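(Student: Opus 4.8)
The plan is to reduce to the minimal case via a $G$-equivariant Minimal Model Program and then invoke the rigidity results already established. Let $G\subset\Bir(S)$ be a finite subgroup. First I would regularize the $G$-action: there is a smooth projective surface $\tilde S$ with a biregular $G$-action and a $G$-equivariant birational map $\tilde S\dasharrow S$. Running a $G$-equivariant Minimal Model Program on $\tilde S$, we arrive at a surface $S'$ carrying a biregular $G$-action together with a $G$-equivariant birational map $S\dasharrow S'$, where $S'$ is either a del Pezzo surface with $\rkPic(S')^G=1$ or a $G$-equivariant conic bundle with $\rkPic(S')^G=2$. By Theorem~\ref{theorem:Lang-Nishimura}, $S'$ has no $\KK$-points (and in fact no points of degree $1$ or $2$ over $\KK$, by Lemma~\ref{lemma:points-deg-2}).

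The key observation is that $S$ itself is already a minimal model in the required sense, because $\rkPic(S)=1$ forces $\rkPic(S)^G=1$. So I would apply the equivariant Noether--Fano rigidity of Lemma~\ref{lemma:BR}. To use it, I must first rule out the conic bundle alternative. If $S'\to C$ is a $G$-conic bundle, then contracting $G$-equivariantly over $C$ we may assume $\rkPic(S')=2$, but then applying Corollary~\ref{corollary:minimal-cubic} (which says any del Pezzo surface or conic bundle birational to $S$ is isomorphic to $S$, and in particular is not a conic bundle) gives a contradiction. Hence $S'$ is a del Pezzo surface with $\rkPic(S')^G=1$; a fortiori $\rkPic(S')=1$ as well, since $\rkPic(S')\geq\rkPic(S')^G$ and the MMP output has $\rkPic(S')^G=1$, while running the ordinary (non-equivariant) MMP on $S'$ and again invoking Corollary~\ref{corollary:minimal-cubic} shows $S'\cong S$. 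Now Lemma~\ref{lemma:BR} applies: the $G$-equivariant birational map $S\dasharrow S'$ is an isomorphism. Therefore $G$ is conjugate inside $\Bir(S)$ to a subgroup of $\Aut(S)$, which is abelian by Lemma~\ref{lemma:cubic-Aut}. Consequently $G$ is abelian.

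The main obstacle is the bookkeeping around the equivariant versus non-equivariant Picard ranks: one must be careful that the $G$-MMP output $S'$ genuinely satisfies the hypotheses of Lemma~\ref{lemma:BR}, namely $\rkPic(S')^G=1$ and $S'$ del Pezzo, and simultaneously that the conic-bundle case is excluded. The cleanest route, which I would take, is to first forget the $G$-action entirely: Corollary~\ref{corollary:minimal-cubic} shows that \emph{any} del Pezzo surface or conic bundle birational to $S$ is isomorphic to $S$ (so in particular no conic bundle arises at all), which pins down $S'\cong S$ before the equivariant refinement is even needed; then Lemma~\ref{lemma:BR}, with $S'=S$, upgrades this to a $G$-equivariant isomorphism, and the result follows from Lemma~\ref{lemma:cubic-Aut}.
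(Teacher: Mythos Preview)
Your argument is correct, but it is not the paper's primary proof. The paper's proof is a one-liner: Theorem~\ref{theorem:minimal-cubic} directly gives $\Bir(S)=\Aut(S)$ when $S(\KK)=\varnothing$ and $\rkPic(S)=1$, and then Lemma~\ref{lemma:cubic-Aut} finishes. What you wrote is essentially the alternative proof that the paper itself records in the Remark following the Corollary: regularize, run a $G$-MMP, and use Corollary~\ref{corollary:minimal-cubic} to identify the output with $S$.

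Two small comments on your write-up. First, the detour through Lemma~\ref{lemma:BR} is unnecessary: once Corollary~\ref{corollary:minimal-cubic} yields an abstract isomorphism $S'\cong S$, the biregular $G$-action on $S'$ already exhibits $G$ as a subgroup of $\Aut(S')\cong\Aut(S)$, hence abelian; you do not need the particular birational map $S\dasharrow S'$ to be an isomorphism. Second, the sentence ``a fortiori $\rkPic(S')=1$ as well, since $\rkPic(S')\ge\rkPic(S')^G$'' is misphrased, since that inequality points the wrong way; but you immediately recover by invoking Corollary~\ref{corollary:minimal-cubic} to get $S'\cong S$, so the slip is cosmetic.
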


\begin{proof}
By Theorem~\ref{theorem:minimal-cubic}, one has $\Bir(S)=\Aut(S)$.
Thus the assertion follows from Lemma~\ref{lemma:cubic-Aut}. 
\end{proof}

\begin{remark}
For an alternative proof of Corollary~\ref{corollary:J-rkPic-1} that 
does not use the last assertion of Theorem~\ref{theorem:minimal-cubic},
one can argue as follows. 
Regularizing the action of $G$ and running a $G$-Minimal Model Program,
we obtain a surface $S'$ with an action of $G$ and a $G$-equivariant
birational map $S\dasharrow S'$,
such that $S'$ is either a del Pezzo
surface, or a conic bundle.
By Corollary~\ref{corollary:minimal-cubic}, we have $S'\cong S$. 
Thus the assertion follows from Lemma~\ref{lemma:cubic-Aut}.
\end{remark}

\smallskip
Finally, we complete the proof of Theorem~\ref{theorem:cubic}.
Assertion~(i) is given by Lemma~\ref{lemma:cubic-Aut}.
Furthermore, we know from Lemma~\ref{lemma:birational-to-SB}(i)  
that $\rkPic(S)\le 3$. Thus, assertion~(ii) is implied by 
Corollaries~\ref{corollary:J-rkPic-3}, \ref{corollary:J-rkPic-2}, 
and~\ref{corollary:J-rkPic-1} in the cases when 
$\rkPic(S)$ equals $3$, $2$, and $1$, respectively.
Assertion~(iii) is given by Corollary~\ref{corollary:J-rkPic-3}.

\end{document}